\author{Shoji Yokura}
\address
{Graduate School of Science and Engineering,
Kagoshima University, 21-35 Korimoto 1-chome, Kagoshima 890-0065, Japan}
\email {yokura@sci.kagoshima-u.ac.jp}
\title 
{Hilali conjecture and complex algebraic varieties}
\numberwithin{equation}{section}
\newtheorem{thm}[equation]{Theorem}
\newtheorem{pro}[equation]{Proposition}
\newtheorem{qu}[equation]{Question}
\newtheorem{cor}[equation]{Corollary}
\newtheorem{con}[equation]{Conjecture}
\newtheorem{lem}[equation]{Lemma}
\theoremstyle{definition}
\newtheorem{ex}[equation]{Example}
\newtheorem{defn}[equation]{Definition}
\newtheorem{rem}[equation]{Remark}
\newcommand{\Q}{{\mathbb Q}}
\def\op{\operatorname}
\begin{document} 
 
\begin{abstract}
A simply connected topological space is called \emph{rationally elliptic} if the rank of its total homotopy group and its total (co)homology group are both finite. A well-known Hilali conjecture claims that for a rationally elliptic space its homotopy rank \emph{does not exceed} its (co)homology rank. In this paper, after recalling some well-known fundamental properties of a rationally elliptic space
 and giving some important examples of rationally elliptic spaces and rationally elliptic singular complex algebraic varieties for which the Hilali conjecture holds, we give some revised formulas and some conjectures. We also discuss some topics such as mixd Hodge polynomials defined via mixed Hodge structures on cohomology group and the dual of the homotopy group, related to the ``Hilali conjecture \emph{modulo product}", which is an inequality between the usual homological Poincar\'e polynomial and the homotopical Poincar\'e polynomial.
\end{abstract}

\maketitle

\section{Introduction}
Homotopy group and homology group are clearly two very fundamental and important invariants in geometry and topology; they are related to each other, just like ``two wheels of a car" or ``two sides of a coin". The Hilali conjecture\footnote{We note that there are also \emph{relative Hilali conjectures} for a map instead of a space (see \cite{CHHZ, YY1, YY2, Yo3, ZCH}).} is a very simple inequality concerning the dimensions (called ``homotopy" and ``homology" dimensions) of these two invariants:
\begin{equation}\label{hilali}
\op{dim} \left (\pi_*(X) \otimes \Q \right) \leq \op{dim}  H_*(X; \Q ) \quad ( < \infty)
\end{equation}
Usually people may think (as the author thought before) that for most spaces (excluding strange or very wild spaces) these dimensions are both finite, but it turns out that it is not the case, as remarked in \cite[\S 32 Elliptic spaces]{FHT}. Namely, in a sense, topological spaces whose homotopy and homology dimensions are both finite belongs to a special class of spaces, called \emph{rationally elliptic} spaces. If one of the dimensions is not finite, such a space is called \emph{rationally hyperbolic}. The Hilali conjecture claims the above inequality (\ref{hilali}) for \emph{any rationally elliptic space} $X$ belonging to this ``special" class, namely, in words
$$ \text{\emph{homotopy dimension never exceeds homology dimension.}}$$
Complex algebraic varieties are certainly ``special" spaces in the category of topological spaces. In this paper we consider the Hilali conjecture for complex algebraic varieties. Unfortunately, even complex algebraic varieties are not ``special enough" to discuss the Hilali conjecture, because a complex algebraic variety $X$ has to be \emph{rationally elliptic} in order to see whether the Hilali conjecture holds or not. It turns out that, as proved by Stephen Halperin for the first time in 1977 and later by John B. Friedlander and S. Halperin in 1979, rationally elliptic spaces satisfy \emph{``very stringent restrictions or properties} (see \cite[\S 32 Elliptic spaces]{FHT}). This fact might be good since one could restrict oneself to very special spaces, but bad since it would be hard to control such stringent properties.

The purposes of the present paper are as follows:
\begin{itemize}
\item To propagandize or advertise the Hilali conjecture,
\item To inform that for ``most spaces", or ``generically", either one of the homology dimension and the homotopy dimension is infinite, thus these two dimensions are both finite only for a very special class of spaces and such a space is called \emph{rationally elliptic} and the Hilali conjecture is for such a space.
\item To show that spheres and complex projective spaces are fundamental and important spaces for discussing the Hilali conjecture and the notion of rational ellipticity.
\item To give revised versions of some known formulas and theorems.
\item To give some reasonable questions and conjectures in order to deal with the Hilali conjecture furthermore.
\end{itemize}
The paper is organized as follows. In \S 2 we recall the notion of rational ellipticity, in particular its importance in connection with well-known conjectures such as Bott conjecture, Hopf conjecture and Gromov conjecture in Riemannian geometry.
In \S 3 we recall the Hilali conjecture and see that spheres and complex projective spaces (which are very fundamental and important spaces, building other interesting spaces and complex algebraic varieties) and Cartesian products of them are rationally elliptic and also satisfy the Hilali conjecture. Related to these examples we discuss singular complex algebraic varieties homeomorphic and/or homotopic to the complex projective spaces. In \S 4 we discuss stringent properties of a rationally elliptic space, which were discovered by J. B. Friedlander and S. Halperin, in particular as typical models we consider spheres, complex projective spaces and Cartesian products of them. We also give some revised or strengthened versions of some known formulas and theorems. In \S\S 5-7 we recall some known results about rationally elliptic toric and K\"ahler manifolds and based on these results and discussion in this paper we give a conjecture claiming that \emph{the Hilali conjecture would hold for any complex algebraic variety, singular or non-singular, provided that it is rationally elliptic.} In \S\S 8-10 we discuss what is called ``Hilali conjecture \emph{modulo product}" concerning the usual (homological) Poincar\'e polynomial and the homotopical Poincar\'e polynomial and also the mixed Hodge polynomial and the homotopical mixed Hodge polynomial (based on the existence of mixed Hodge structure), which specialize to the corresponding Poincar\'e polynomials. At the very end we give a conjecture (due to Anatoly Libgober) claiming that \emph{if a quasi-projective variety is rationally elliptic, then the mixed Hodge structure is of Hodge--Tate type}.

Finally we remark that it is very standard or usual to discuss rationally elliptic spaces and the Hilali conjecture, in rational homotopy theory, appealing to Sullivan's model theory, but in this paper we do not do so for this kind of presentation.

%%%%%%%%%%%%
\section{Rationally elliptic space}\label{res}
We let 
$$\pi_*(X)\otimes \Q := \bigoplus_{k \geq 1} \pi_k(X) \otimes \Q \quad  \text{and} \quad  H_*(X; \Q ):= \bigoplus_{k \geq 0} H_k(X;\Q).$$
A simply connected 
topological space $X$ is called \emph{rationally elliptic}\footnote{This terminology has nothing to do with an \emph{elliptic} curve in Algebraic Geometry. In \cite{Hal} this name was not used. In \cite[\S 1 Introduction]{FH} it was called a \emph{space of type F} and in \cite{GH} this name seems to start being used.} 
if 
$$\dim \left (\pi_*(X) \otimes \Q \right) = \sum_{k \geq 2} \dim \left (\pi_k(X) \otimes \Q \right) < \infty, \, \, \dim H_*(X;\Q) = \sum_{k \geq 0} \dim  H_k(X; \Q ) < \infty.$$
Here we note that these dimensions are respectively the ranks of the total homotopy group $\pi_*(X) = \bigoplus_{k \geq 2} \pi_k(X)$ and the total homology group $H_*(X;\mathbb Z) = \bigoplus_{k \geq 0} H_k(X; \mathbb Z )$.
From now on, $\dim \left (\pi_*(X) \otimes \Q \right)$ and $\dim H_*(X;\Q)$ shall be simply called ``homotopy" and ``homology" dimension, respectively.
The cohomology $H^*(X;\Q)$ can be also used instead of the homology group, but by the universal coefficient theorem we have $\dim H^*(X;\Q)=\dim H_*(X;\Q)$.

Importance of this \emph{rational ellipticity}, in particular, comes from the following well-known conjecture attributed to Raoul Bott (cf. \cite{BB}) in Riemannian Geometry:
\begin{con}[Bott conjecture\footnote{This is supposed to be one of the central conjectures in Riemannian geometry}]
A compact simply connected Riemannian manifold with a non-negative sectional curvature is rationally elliptic.
\end{con}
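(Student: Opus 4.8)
Since this is one of the central open problems of Riemannian geometry, what follows is a \emph{strategy} rather than a complete argument; I will organize the available machinery and try to pinpoint where the genuine gap lies. The first step is a reduction. For a \emph{compact} manifold $X$ the homology dimension $\dim H_*(X;\Q)$ is automatically finite, so rational ellipticity of $X$ is equivalent to finiteness of the homotopy dimension $\dim\left(\pi_*(X)\otimes\Q\right)$, i.e.\ to the assertion that $X$ is \emph{not rationally hyperbolic}. The plan is therefore to show that the curvature hypothesis $\sec\geq 0$ is incompatible with hyperbolic behaviour.

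The main bridge I would use is the dichotomy refining the stringent restrictions of Friedlander and Halperin: a simply connected space with finite-dimensional rational cohomology is either elliptic, or else \emph{hyperbolic}, in which case the loop-space Betti numbers $\dim H_k(\Omega X;\Q)$ grow exponentially in $k$ (equivalently, the partial sums of the homotopy ranks grow exponentially). Thus it would suffice to prove that $\sec\geq 0$ forces $H_*(\Omega X;\Q)$ to grow at most \emph{sub-exponentially}. Here Riemannian geometry enters: by Morse theory for the energy functional on the path space, $\dim H_k(\Omega X;\Q)$ is governed by the number and index of geodesics joining two generic points, and a lower curvature bound controls conjugate points, hence this geodesic count. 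This is precisely the circle of ideas behind Gromov's theorem bounding the total Betti number of a non-negatively curved $n$-manifold, whose conjecturally sharp form $\sum_i \dim H_i(X;\Q)\leq 2^n$ is the cohomological shadow of the statement I am after.

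A second, more hands-on ingredient is the structure theory together with the known examples. All presently understood compact simply connected non-negatively curved manifolds --- homogeneous spaces, biquotients, cohomogeneity-one manifolds, and the sporadic examples --- are assembled from spheres and projective spaces by fibrations; and since spheres and complex projective spaces are the basic rationally elliptic spaces, and ellipticity is stable under the relevant fibre bundles (if two of $F\to E\to B$ are elliptic then so is the third), each such example is elliptic. A genuine proof would either have to show that \emph{every} non-negatively curved $X$ is, rationally, of this assembled form, or else bypass the examples entirely through the growth estimate above.

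The hardest step --- indeed essentially the entire content of the conjecture --- is converting the pointwise inequality $\sec\geq 0$ into global sub-exponential growth of $\dim H_k(\Omega X;\Q)$. Under $\sec>0$, or under extra symmetry, the partial classifications provide control, but $\sec\geq 0$ sits exactly on the borderline where the geodesic-counting estimates degenerate and no uniform bound is known; this is also why even the sharp Gromov bound $2^n$ remains open. I therefore expect that realistic progress comes not from a single clean argument but from two directions at once: enlarging the class of non-negatively curved manifolds whose rational homotopy type can be computed, and sharpening the elliptic/hyperbolic dichotomy into a quantitative growth statement into which a suitable curvature-controlled estimate can be fed.
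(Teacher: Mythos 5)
You cannot compare this proposal against a proof in the paper, because the paper contains none: the statement is the Bott (Bott--Grove--Halperin) conjecture, which the paper explicitly presents as open, offering only the Grove--Halperin reformulation (rational ellipticity of $M$ is equivalent to sub-exponential growth of $\rho_p=\sum_{q\leq p}\dim H_q(\Omega M;\mathbb Q)$) and Chen's theorem, which settles the conjecture only under the \emph{strictly stronger} hypothesis of an entire Grauert tube (via Lempert--Sz\"oke). Your proposal is commendably honest that it is a strategy rather than a proof, and its framing is essentially the same as the paper's: your reduction (compactness makes the homology dimension finite, so ellipticity amounts to finiteness of the homotopy dimension) is correct, and your proposed bridge through exponential versus sub-exponential growth of $\dim H_k(\Omega X;\mathbb Q)$ is exactly the Grove--Halperin equivalent formulation quoted in the paper. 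But the step you correctly flag as the hard one is not merely hard --- it \emph{is} the conjecture: no known mechanism converts the pointwise inequality $\sec\geq 0$ into sub-exponential growth of loop-space Betti numbers. The Morse-theoretic sketch does not close this: non-negative curvature controls conjugate points only in one direction (no conjugate points up to distance bounds under $\sec\leq 0$-type hypotheses is a different regime), and Gromov's theorem bounds $\dim H_*(M;\mathbb F)\leq C(n)$ for the manifold itself, not the growth of $H_*(\Omega M;\mathbb Q)$; there is no implication from the former to the latter, which is why Gromov's result is, as the paper quotes, ``a much weaker version of this conjecture.''

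One concrete mathematical error in your secondary ingredient: the two-out-of-three principle you invoke for fibrations is false as stated. If the total space and base of a fibration $F\to E\to B$ of simply connected spaces are rationally elliptic, the fiber need not be: the path--loop fibration $\Omega S^2\to PS^2\to S^2$ has contractible total space and elliptic base, yet $H_*(\Omega S^2;\mathbb Q)$ is infinite-dimensional, so $\Omega S^2$ is not elliptic. (The valid direction --- fiber and base elliptic imply total space elliptic, via the long exact homotopy sequence and the Serre spectral sequence --- does suffice for your examples built from spheres and projective spaces, so the survey of known non-negatively curved examples stands, but it carries no force toward the general statement.) In short: the proposal contains no gap-free argument, and none should be expected; the genuinely missing idea --- a curvature-controlled sub-exponential bound on loop-space homology --- is precisely what remains open, with the only rigorous progress recorded in the paper being Chen's theorem under the stronger Grauert-tube hypothesis.
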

As to this conjecture, we cite the following remark from Grove--Halperin's paper \cite[Introduction, p.380]{GH}:

\emph{``This conjecture has been attributed to Bott. Our interest in it was first stimulated by D. Toledo. An assertion equivalent to the conjecture is that the integers $\rho_p= \sum_{q\leq p}\dim H_q(\Omega M;\mathbb Q)$
 grow only sub-exponentially in $p$ (i.e., $\forall C>1, \forall k \in \mathbb N, \exists p>k : \rho_p <C^p$), in particular the principal result of \cite{BB} is a much weaker version of this conjecture."}

Hence Bott conjecture is sometimes called ``Bott--Grove--Halperin conjecture".
Later we will see that if $M$ is rationally elliptic and $\dim M=n$, then we have 
 \begin{enumerate}
\item  $\chi(M) \geq 0$.
 \item $\dim H_*(M;\mathbb Q) \leq 2^n$  
 \end{enumerate}
Thus, \emph{a positive answer to Bott conjecture} would imply the following Hopf--Chern conjecture and Gromov conjecture (in the case when $\mathbb F=\Q$), which are still open:
\begin{con}[Hopf conjecture \cite{Hopf}(also see \cite{BG}), 1953 or Chern conjecture \cite{Chern}, 1966]
A compact simply connected even-dimensional Riemannian manifold $M$ with non-negative sectional curvature has non-negative Euler characteristic, $\chi(M) \geq 0$.  
 \end{con}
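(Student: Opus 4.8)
The plan is to control the sign of $\chi(M)$ directly through the Gauss--Bonnet--Chern theorem, rather than through any homotopy-theoretic detour. For a closed oriented Riemannian manifold of even dimension $2m$ (orientability being automatic here from simple connectivity) one has $\chi(M) = c_m \int_M \operatorname{Pf}(\Omega)$, where $\Omega$ is the curvature $2$-form, $\operatorname{Pf}$ its Pfaffian, and $c_m > 0$ a universal positive constant. The dream, going back to Hopf himself, is that non-negative sectional curvature should force the integrand $\operatorname{Pf}(\Omega)$ to be pointwise non-negative, whence $\chi(M) \ge 0$ would follow simply by integrating.

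First I would dispose of the low-dimensional cases, where exactly this pointwise strategy succeeds. For $2m = 2$ the statement is the classical Gauss--Bonnet theorem. For $2m = 4$ a purely algebraic analysis of the curvature tensor (Milnor, Chern) shows that non-negative sectional curvature makes the Gauss--Bonnet--Chern integrand pointwise non-negative, so the conjecture holds unconditionally in dimensions $2m \le 4$; note that simple connectivity is not even needed for this.

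For $2m \ge 6$ the local strategy collapses: Geroch exhibited algebraic curvature tensors with positive sectional curvature but negative Pfaffian, so the integrand is not sign-definite and no purely pointwise argument can work. I would then pursue two complementary global routes. The first is the rational-homotopy route supplied by the present paper: granting the Bott conjecture, the simply connected manifold $M$ is rationally elliptic, and a rationally elliptic space has non-negative Euler characteristic (as recalled above), so $\chi(M) \ge 0$; this is clean but trades one open problem for another. The second, self-contained route is the symmetry method: when $M$ carries an isometric action of a torus whose rank is large relative to $\dim M$, Berger--Frankel-type fixed-point theorems together with critical-point theory for the distance function (as developed by Grove--Searle and Wilking) transfer non-negativity of $\chi$ from the strictly lower-dimensional fixed-point components, yielding the conclusion by induction on dimension under such symmetry hypotheses.

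The main obstacle is precisely the absence of a local-to-global principle in dimensions $\ge 6$. Once $\operatorname{Pf}(\Omega)$ loses its pointwise sign, controlling the sign of $\int_M \operatorname{Pf}(\Omega)$ demands a genuinely global mechanism linking the sign of sectional curvature to the topology of $M$, and no such mechanism is known without extra input such as large symmetry or the full strength of the Bott conjecture. This is exactly why the Hopf--Chern conjecture remains open, and any complete proof would have to supply a new global ingredient beyond Gauss--Bonnet--Chern.
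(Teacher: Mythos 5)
You were asked to prove something that the paper itself records as an \emph{open conjecture}: the Hopf--Chern conjecture appears in \S 2 with no proof, precisely because none is known, so there is no proof of the paper's to compare yours against. Your proposal, to its credit, does not pretend otherwise; its final paragraph concedes that nothing unconditional is obtained in dimensions $\geq 6$. What the paper does contain about this statement is exactly the conditional implication you list as your ``rational-homotopy route'': by Halperin's theorem recalled in \S 4 (Theorem 1 of \cite{Hal}), a rationally elliptic space $X$ satisfies $\chi(X) \geq 0$, hence a positive answer to the Bott conjecture (compact simply connected, non-negative sectional curvature $\Rightarrow$ rationally elliptic) would immediately give $\chi(M) \geq 0$; the paper makes this observation in a remark and explicitly leaves both conjectures open. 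So on the only point of contact with the paper, your proposal reproduces the paper's reasoning faithfully.

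The rest of your proposal is an accurate survey rather than a proof, and you should be clear-eyed that this is its status. Your unconditional claims are correct as far as they go: Gauss--Bonnet settles dimension $2$; in dimension $4$ the algebraic argument (Milnor, reported by Chern) shows non-negative sectional curvature forces the Gauss--Bonnet--Chern integrand to be pointwise non-negative, and simple connectivity is indeed not needed there; Geroch's algebraic curvature tensors show the pointwise strategy fails from dimension $6$ on, so no local-to-global shortcut exists; and the symmetry results you invoke (Grove--Searle, Wilking) prove the conclusion only under additional torus-rank hypotheses not present in the statement. The genuine gap, then, is unavoidable rather than a flaw in your execution: every route you propose either assumes the Bott conjecture, assumes large symmetry, or works only in dimension $\leq 4$. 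A reviewer should read your text as a correct assessment of why the conjecture is open --- which is also how the paper treats it --- and not as a proof attempt that could be repaired.
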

 \begin{con}[Gromov conjecture \cite{Gromov},1981]
A simply connected complete manifold $M$ (of dimension $n$) with a non-negative sectional curvature satisfies $\dim H_*(M;\mathbb F) \leq 2^n$ for \emph{any field $\mathbb F$}. 
 \end{con}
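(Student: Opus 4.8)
The plan is to isolate the two features that make this statement genuinely harder than the rational case discussed above --- the \emph{completeness} (so that $M$ may be noncompact) and the \emph{arbitrary coefficient field} $\mathbb{F}$ --- and to peel each off, reducing to a core compact, positive-characteristic problem. First I would dispose of noncompactness via the Cheeger--Gromoll soul theorem: a complete simply connected $M^n$ with $\op{sec}\ge 0$ is diffeomorphic to the total space of the normal bundle of a compact totally geodesic \emph{soul} $S\subset M$, and the zero-section inclusion $S\hookrightarrow M$ is a homotopy equivalence. By the Gauss equation $S$ again carries $\op{sec}\ge 0$, and since $S$ is homotopy equivalent to $M$ it is simply connected. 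Hence $H_*(M;\mathbb{F})\cong H_*(S;\mathbb{F})$ for every $\mathbb{F}$, and writing $s=\dim S\le n$, the inequality for $M$ follows from the same inequality for the \emph{compact} manifold $S$ because $2^{s}\le 2^{n}$. This reduces the whole statement to compact $M$, where one must still treat all fields at once.

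For compact $M$ the rational case is the one already in hand: granting the Bott conjecture, $M$ is rationally elliptic, and property (2) recalled above yields $\dim H_*(M;\mathbb{Q})\le 2^{n}$. The remaining task is to pass from $\mathbb{Q}$ to an arbitrary prime field $\mathbb{F}_p$, and by the universal coefficient theorem this is governed entirely by integral torsion. If $t_k$ denotes the number of $p$--primary cyclic summands in $H_k(M;\mathbb{Z})$, then
\begin{equation*}
\dim H_*(M;\mathbb{F}_p)=\dim H_*(M;\mathbb{Q})+2\sum_{k}t_k .
\end{equation*}
Thus the conjecture over $\mathbb{F}_p$ is equivalent to the estimate $\dim H_*(M;\mathbb{Q})+2\sum_k t_k\le 2^{n}$: the $p$--torsion must be small enough to be absorbed by whatever slack remains in the rational bound. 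I would first verify this directly on the standard models for the near-extremal rational case --- products of spheres and rank-one symmetric spaces, and more generally the known homogeneous spaces and biquotients carrying $\op{sec}\ge 0$ --- whose integral (co)homology, hence $\sum_k t_k$, can be computed from the Halperin structure theory of rationally elliptic spaces.

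The main obstacle is precisely this torsion estimate in the general compact case. Non-negative sectional curvature gives no obvious bound on the number of $\mathbb{Z}/p$--summands, and the one unconditional, coefficient-free bound actually available --- Gromov's Betti number theorem, obtained via critical-point theory for distance functions together with Toponogov packing estimates, and valid for \emph{all} $\mathbb{F}$ simultaneously --- produces a constant $C(n)$ that is exponential in $n$ but vastly larger than $2^{n}$. A proof of the statement exactly as worded would therefore require either sharpening Gromov's packing count to the optimal value $2^{n}$ (a purely geometric problem, independent both of the coefficient field and of the Bott conjecture), or establishing the uniform torsion bound $2\sum_k t_k\le 2^{n}-\dim H_*(M;\mathbb{Q})$ under $\op{sec}\ge 0$. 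I expect the latter to be the crux: controlling torsion in integral homology by curvature is exactly the phenomenon with no rational-homotopy shadow, and it is the reason the conjecture remains open beyond the field $\mathbb{Q}$.
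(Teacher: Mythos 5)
The statement you were asked about is labeled a \emph{conjecture} in the paper and is explicitly recorded there as open; the paper contains no proof of it, only the observation that a positive answer to the Bott conjecture would imply the case $\mathbb{F}=\mathbb{Q}$, via the unconditional inequality $\dim H_*(X;\mathbb{Q})\le 2^{n_X}$ for rationally elliptic spaces proved in \S 4 (Halperin's estimate $2^{q-r}\prod_j b_j/\prod_i a_i=\prod_j 2b_j/\prod_i 2a_i<2^{\sum_j b_j}\le 2^{n_X}$). Your proposal, correctly, does not prove the statement either: its two substantive steps are (i) reduction of the complete noncompact case to the compact case via the Cheeger--Gromoll soul theorem, and (ii) the conditional rational case, which is exactly the paper's remark and rests on the (open) Bott conjecture. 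Step (i) is sound --- the soul $S$ is compact, totally geodesic (hence inherits $\op{sec}\ge 0$), and a deformation retract of $M$, so $\dim H_*(M;\mathbb{F})=\dim H_*(S;\mathbb{F})\le 2^{\dim S}\le 2^{n}$ once the compact case is known --- and it is a genuine addition, since the paper never discusses the noncompact reduction at all. Your universal-coefficient bookkeeping $\dim H_*(M;\mathbb{F}_p)=\dim H_*(M;\mathbb{Q})+2\sum_k t_k$ is also correct for a closed manifold: each $p$-primary cyclic summand of $H_k(M;\mathbb{Z})$ contributes once through $\otimes\,\mathbb{F}_p$ in degree $k$ and once through $\operatorname{Tor}$ in degree $k+1$.

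The gap --- which, to your credit, you flag yourself --- is that nothing in the proposal (nor in the paper, nor in the known literature) bounds the integral $p$-torsion under $\op{sec}\ge 0$. Rational homotopy theory, the entire engine behind the $2^{n_X}$ bound, is blind to torsion, so the needed estimate $2\sum_k t_k\le 2^{n}-\dim H_*(M;\mathbb{Q})$ has no available mechanism; the only unconditional, field-uniform result is Gromov's theorem $\dim H_*(M;\mathbb{F})\le C(n)$ with $C(n)$ far exceeding $2^{n}$, exactly as the paper's remark after the conjecture states. Moreover, even your rational step is doubly conditional: it requires the Bott conjecture, which the paper also records as open (X.~Chen's theorem settles it only under the stronger hypothesis of an entire Grauert tube). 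So your text should be read as a correct and well-organized reduction of an open conjecture to its known core difficulties --- a useful map of the problem --- but not as a proof, and there is no paper-internal argument to compare it against beyond the conditional $\mathbb{F}=\mathbb{Q}$ remark, with which your step (ii) agrees.
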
 
\begin{rem}
To be more precise about Gromov conjecture, Mikhael L. Gromov has proved that there is a constant $C(n)$ such that $\dim H_*(M;\mathbb F) \leq C(n)$ and then conjectured that the upper bound of $C(n)$ is $2^n$, i.e., $C(n) \leq 2^n$.
\end{rem}
As to Bott' conjecture, Xiaoyang Chen \cite{Chen} has recently proved 
\begin{thm} A simply connected Riemannian manifold with entire Grauert tube is rationally elliptic.
\end{thm}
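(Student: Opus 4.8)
The plan is to translate the complex-analytic hypothesis into a quantitative topological growth bound and then invoke the Grove--Halperin characterization of rational ellipticity recalled above. First I would unpack what the hypothesis means: for a real-analytic metric on $M$ there is a canonical \emph{adapted} complex structure $J$ on a neighborhood of the zero section of $TM$, determined uniquely by the requirement that each complexified geodesic $u+iv \mapsto v\,\dot\gamma(u)$ be holomorphic, and the \emph{entire Grauert tube} condition asserts that this $J$ extends over all of $TM$. This makes $(TM,J)$ a Stein manifold equipped with a strictly plurisubharmonic exhaustion $\rho$ (essentially $|v|_g^2$) solving the homogeneous complex Monge--Amp\`ere equation off the zero section, whose leaves are exactly the complexified geodesics. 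So, stripped of the complex-geometric packaging, the hypothesis is a completeness and rigidity statement about the geodesic flow.

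Next I would extract the geometric content. Since the leaves of the Monge--Amp\`ere foliation \emph{are} the complexified geodesics, extendability of $J$ over all of $TM$ forces each such complexification to be entire, i.e. defined on the whole complex plane. I would expect this to impose sharp restrictions on conjugate points and on the complexity of the geodesic flow, the same rigidity that in the known structure theory of entire Grauert tubes pushes $M$ toward flat or rank-one symmetric models. The quantity I would aim to isolate is a uniform \emph{sub-exponential} (ideally polynomial) bound on the number of geodesic segments of energy at most $E$ joining two generic points of $M$, as $E \to \infty$.

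Granting such a bound, the conclusion should follow by Morse theory on the energy functional together with the equivalent form of rational ellipticity quoted from Grove--Halperin. Indeed, a geodesic of Morse index $\lambda$ contributes to $H_\lambda(\Omega M;\Q)$, and the index is controlled by the conjugate points along the geodesic; hence a sub-exponential count of geodesics yields a sub-exponential bound on $\rho_p = \sum_{q \le p}\dim H_q(\Omega M;\Q)$, which is precisely the assertion equivalent to rational ellipticity. Simple connectivity is what removes the fundamental-group contributions to the loop space and places us squarely in the elliptic-versus-hyperbolic dichotomy, so that this route avoids appealing to the (still open) Bott conjecture itself.

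The hard part will be the middle step, namely converting the qualitative, purely complex-analytic fact that $J$ extends over all of $TM$ into the quantitative, \emph{uniform} bound on geodesic counts (equivalently, on loop-space Betti numbers). Passing from ``the Monge--Amp\`ere foliation is complete'' to effective control of conjugate points and of the topology of path spaces is where the real work lies; the natural attempt is to use the exhaustion $\rho$ and the Stein structure of $(TM,J)$ to build a global holomorphic model of the geodesic flow and read off the growth from it, but making this estimate uniform over all pairs of points is the genuine obstacle.
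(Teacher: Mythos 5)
The paper contains no proof of this statement: it is quoted verbatim as X.~Chen's theorem \cite{Chen}, so your proposal has to stand on its own, and as written it is a reduction rather than a proof. Your framing is correct --- by the Grove--Halperin remark quoted in \S 2 \cite{GH}, for simply connected $M$ rational ellipticity is equivalent to sub-exponential growth of $\rho_p=\sum_{q\le p}\dim H_q(\Omega M;\Q)$, and by Lempert--Sz\"oke \cite{LS} the entire-tube hypothesis carries the expected curvature rigidity --- but everything is then staked on your ``middle step,'' namely extracting from the extendability of the adapted complex structure a uniform sub-exponential count of geodesics, and you explicitly leave that step open. That step is not a technical refinement to be filled in later; it is the entire content of the theorem, and nothing in your sketch converts the completeness of the Monge--Amp\`ere foliation into any quantitative bound. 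Moreover the Morse-theoretic bridge you propose has its own unaddressed problems: the energy functional need not be Morse, and you cannot perturb the metric to make it generic without destroying the (extremely rigid) entire-tube hypothesis; and the Morse inequalities bound $\dim H_q(\Omega M;\Q)$ by the number of geodesics of \emph{index} $q$, not of energy at most $E$, so your proposed ``count of geodesic segments of energy at most $E$'' does not directly bound $\rho_p$ --- in nonnegative curvature, geodesics of arbitrarily large energy can have small index, so an energy cutoff controls nothing without a separate index-versus-energy estimate.

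It is also worth knowing that the published proof takes a genuinely different route that bypasses geodesic counting altogether: one uses the algebraization of entire Grauert tubes (after Burns), by which $TM$ with its adapted complex structure is a smooth affine algebraic variety; McLean's theorem bounding the growth rate of symplectic homology of smooth affine varieties; Viterbo's isomorphism $SH_*(T^*M)\cong H_*(\Lambda M)$ transporting that bound to the loop space homology of $M$; and finally the elliptic/hyperbolic dichotomy of F\'elix--Halperin--Thomas \cite{FHT}, under which rational hyperbolicity would force exponential growth of loop space homology. In other words, the passage from complex geometry to topology goes through Floer-theoretic invariants of the Stein structure, precisely because the direct holomorphic-model-of-the-geodesic-flow estimate you identify as ``the genuine obstacle'' is not known to be achievable. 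Your proposal correctly locates the difficulty but does not overcome it, so it does not constitute a proof.
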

L\'aszl\'o Lempert and R\'obert Sz\"oke \cite{LS} has proved that ``entire Grauert tube" implies ``non-negative sectional curvature", thus we can say that X. Chen has affirmatively solved Bott--Grove--Halperin conjecture \emph{under the stronger assumption} of ``entire Grauert tube".
 
 %%%%%%%%%%%%%
 \section{Hilali conjecture}

Mohamed Rachid Hilali made the following conjecture in his thesis \cite{H} (also see a recent survey paper by Yves F\'elix and S. Halperin \cite[\S 10]{FH}): 
\begin{con}[Hilali Conjecture, 1990] If $X$ is a rationally elliptic space, then we have
$$\dim (\pi_*(X) \otimes \Q) \leq \dim H_*(X;\Q ).$$
\end{con}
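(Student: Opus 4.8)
The plan is to pass to rational homotopy theory and argue through the minimal Sullivan model $(\Lambda V, d)$ of $X$, even though the body of the paper deliberately avoids this machinery. For a simply connected space with finite rational homotopy one has $V \cong \mathrm{Hom}(\pi_*(X)\otimes\Q, \Q)$ as graded vector spaces and $H(\Lambda V, d)\cong H^*(X;\Q)$ as graded algebras, so that $\dim V = \dim(\pi_*(X)\otimes\Q)$ and $\dim H(\Lambda V,d) = \dim H^*(X;\Q) = \dim H_*(X;\Q)$. Writing $p = \dim V^{\mathrm{odd}}$ and $q = \dim V^{\mathrm{even}}$, ellipticity forces $p, q < \infty$, and the conjecture becomes the purely algebraic inequality
\[
p + q \;\leq\; \dim H(\Lambda V, d).
\]
The first step is therefore to reformulate everything as a statement about a finite-dimensional graded space $V$ and a minimal differential $d$, recording the structural constraints on elliptic models that the excerpt already invokes: the nonnegative Euler characteristic $\chi \geq 0$ and the standard inequality $q \leq p$ (equivalently $\chi_\pi \leq 0$).

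Next I would establish the inequality on the generating examples and then propagate it. For an odd sphere $S^{2m+1}$ the model is $(\Lambda(y),0)$ with $|y|$ odd, giving $1 = \dim V \leq \dim H = 2$; for an even sphere and for $\C P^n$ the models $(\Lambda(x,y),d)$ with $dy$ a power of $x$ give $\dim V = 2 \leq \dim H$. The crucial structural remark is that the two sides behave \emph{oppositely} under products: by Künneth $\dim H_*(\,\cdot\,;\Q)$ is multiplicative, whereas $\dim(\pi_*(\,\cdot\,)\otimes\Q)$ is merely additive. Since any nontrivial simply connected elliptic space has $\dim H_* \geq 2$, the elementary estimate $a_1 + a_2 \leq b_1 b_2$ whenever $a_i \leq b_i$ and $b_i \geq 2$ shows that the class of spaces satisfying the conjecture is closed under Cartesian products; this recovers all the product examples of \S 3 from the sphere and projective-space cases.

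For a general elliptic model I would attempt a reduction to the \emph{pure} case. Associating to $(\Lambda V, d)$ its pure differential $d_\sigma$ (sending $V^{\mathrm{even}}$ to $0$ and $V^{\mathrm{odd}}$ into $\Lambda V^{\mathrm{even}}$) produces a filtration whose spectral sequence has $E_1 = H(\Lambda V, d_\sigma)$ and converges to $H(\Lambda V, d)$. In the pure, and in particular the positively elliptic $F_0$, situation the cohomology is a complete-intersection algebra whose dimension is an explicit product that is readily seen to dominate $p+q = 2q$ (indeed it is known to be at least $2^q$). The hope is then to transport such a lower bound down to $H(\Lambda V,d)$, treating the non-pure part of $d$ as a perturbation and inducting on the filtration length, or equivalently on the number of generators through the fibration $\Lambda V^{\leq k}\hookrightarrow \Lambda V$.

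The hard part — and the reason the statement is a \emph{conjecture} rather than a theorem — is precisely this last transport. The spectral-sequence differentials can only \emph{decrease} total cohomology dimension, so passing from the pure model to the genuine one moves the estimate in the \emph{wrong} direction, and the inductive step can destroy independent classes that one has no uniform way to replace. Consequently the strategy above yields a clean proof only for the special classes where the differential is already controlled — formal spaces, pure and $F_0$ spaces, hyperelliptic spaces, and everything built from them by products — while the fully general elliptic case demands a genuinely new lower bound on $\dim H(\Lambda V, d)$ that is stable under arbitrary minimal perturbations of $d$. Constructing such a perturbation-stable bound is where I expect essentially all of the difficulty to concentrate.
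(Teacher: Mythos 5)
You were asked to ``prove'' the Hilali Conjecture itself, and the paper contains no proof of it: it explicitly records that the conjecture is still open more than thirty years after it was posed, and the body of the paper only assembles evidence, namely classes of spaces for which the inequality is known. Your proposal is therefore correct precisely insofar as it stops short of claiming a proof, and your diagnosis of where the difficulty concentrates is accurate. The partial results you do establish coincide with the ones the paper records. Your product-stability step, via the elementary estimate $a_1+a_2\leq b_1b_2$ when $a_i\leq b_i$ and $b_i\geq 2$, is exactly the paper's Proposition \ref{prop} together with Lemma \ref{lemma}; note only that your hypothesis ``nontrivial, so $\dim H_*\geq 2$'' silently excludes rationally acyclic factors, which the paper handles through the case $0=a_i<b_i=1$ using Theorem \ref{serre} (Whitehead--Serre) to force $\dim(\pi_*(X)\otimes\Q)=0$ whenever $\dim H_*(X;\Q)=1$; you should include that clause to make the closure-under-products claim complete. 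Your positively elliptic case ($\chi>0$ forces $p=q$, whence $p+q=2q\leq 2^q\leq\dim H$) is the paper's proposition deduced from Friedlander--Halperin's formula $\dim H_*(X;\Q)=\prod_i b_i/\prod_i a_i$ combined with $b_i\geq 2a_i$ (Corollary \ref{ba}), and the formal case you cite is Hilali--Mamouni's theorem, which the paper invokes to settle all rationally elliptic K\"ahler manifolds. Your structural remark about the pure (odd) spectral sequence is also correct and is worth emphasizing: since $\dim E_\infty\leq\dim E_1$, a lower bound on the cohomology of the pure model does not descend to $H(\Lambda V,d)$, which is exactly why the Friedlander--Halperin polynomial $Q_X(t)$ in Theorem \ref{P<Q} yields only the \emph{upper} bound $P_X(t)\leq Q_X(t)$, the hard direction for Hilali being a lower bound on cohomology. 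In short, no step you take is wrong, but what you have is a survey of the known special cases plus a correct account of the obstruction --- the same position the paper itself occupies --- and you could strengthen the survey by noting the paper's further known cases (rationally elliptic smooth toric varieties, and formal dimension $n_X\leq 20$ by Cattalani--Milivojevi\'c), none of which your perturbation strategy recovers in general.
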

The conjecture is very simple in the sense that it is just an inequality of the dimensions of homotopy and homology (which are fundamental, important, well-known and well-studied invariants in geometry and topology, even in mathematical physics), thus it seems to be quite tractable and to be solvable easily, but no one has found a counterexample yet, thus it is still open in even more than 30 years after it was conjectured.

%%%%%%%%%
\subsection{Fundamental examples for which the Hilali conjecture holds.}

\begin{ex}\label{sphere}
The following results follow from the well-known \emph{Serre Finiteness Theorem} \cite{Se}:

%%%%%%%%%%%%%%
$$
\pi_i(S^{2k}) \otimes \Q 
=\begin{cases} 
 \Q & \, i=2k, 4k-1,\\
\, 0  & \,  i\not =2k, 4k-1,
\end{cases}  \qquad 
\pi_i(S^{2k+1}) \otimes \Q 
=\begin{cases} 
 \Q & \,  i=2k+1,\\
\, 0 & \, i \not =2k+1.
\end{cases} 
$$
Hence we see that
\begin{equation*}
2= \dim (\pi_*(S^{2k}) \otimes \Q) =\dim H_*(S^{2k};\Q )=1+ \dim H_{2k}(S^{2k};\Q )=2. 
\end{equation*}
\begin{equation*}
1= \dim (\pi_*(S^{2k+1}) \otimes \Q) < \dim H_*(S^{2k+1};\Q )=1+ \dim H_{2k+1}(S^{2k+1};\Q )=2. 
\end{equation*}
\end{ex}

\begin{ex}\label{proj}
$$\pi_k(\mathbb {CP}^n)\otimes \mathbb Q =
\begin{cases}
\mathbb Q \quad \text{for $k=2, 2n+1$,}\\
0 \quad \text{for $k\not = 2, 2n+1$,}\\
\end{cases}
$$
This follows from the long exact sequence of a fibration $S^1 \hookrightarrow S^{2n+1} \to \mathbb {CP}^n$:
$$\cdots \to \pi_k(S^1) \to \pi_k(S^{2n+1}) \to \pi_k(\mathbb {CP}^n) \to \pi_{k-1}(S^1) \to \pi_{k-1}(S^{2n+1}) \to \cdots $$
The following is known:
$$
H_k(\mathbb {CP}^n; \mathbb Q) =
\begin{cases}
\mathbb Q \quad \text{for $k=0$ and $2 \leq k \leq 2n$ for even $k$,}\\
0 \quad \text{otherwise.}\\
\end{cases}
$$
Hence we have 
$$2=\dim (\pi_*(\mathbb {CP}^n) \otimes \Q) \leq  \dim H_*(\mathbb {CP}^n;\Q )=1+n \, \, (n \geq 1).$$
\end{ex}

%%%%%%%%%%%%%%%
Before giving more examples, let us observe the following:
\begin{pro}\label{prop}
 If $X_1, \cdots, X_m$ are rationally elliptic and satisfy the Hilali conjecture, then $X_1 \times \cdots \times X_m$ is also rationally elliptic and satisfies the Hilali conjecture.
\end{pro}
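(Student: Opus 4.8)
The plan is to reduce immediately to the case $m=2$ by induction on $m$: once the two-factor statement is known, one groups $X_1\times\cdots\times X_m=(X_1\times\cdots\times X_{m-1})\times X_m$ and combines the inductive hypothesis (applied to the first $m-1$ factors) with the two-factor case. So I fix two simply connected rationally elliptic spaces $X$ and $Y$, both satisfying the Hilali conjecture, and abbreviate $p(Z):=\dim(\pi_*(Z)\otimes\Q)$ and $h(Z):=\dim H_*(Z;\Q)$ for the homotopy and homology dimensions.

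First I would record how these two invariants behave under products. On the homotopy side, the natural isomorphism $\pi_k(X\times Y)\cong\pi_k(X)\oplus\pi_k(Y)$, valid for all $k$ (and in particular showing that $X\times Y$ is again simply connected), tensors with $\Q$ to give $p(X\times Y)=p(X)+p(Y)$. On the homology side, the K\"unneth theorem over the field $\Q$ yields $H_*(X\times Y;\Q)\cong H_*(X;\Q)\otimes_{\Q}H_*(Y;\Q)$, hence $h(X\times Y)=h(X)\cdot h(Y)$. Since $p(X),p(Y),h(X),h(Y)$ are all finite by hypothesis, both $p(X\times Y)$ and $h(X\times Y)$ are finite, so $X\times Y$ is rationally elliptic; this settles the first assertion.

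It remains to verify the inequality $p(X\times Y)\le h(X\times Y)$, that is, $p(X)+p(Y)\le h(X)\cdot h(Y)$. The Hilali conjecture for the two factors gives $p(X)+p(Y)\le h(X)+h(Y)$, so it would suffice to know $h(X)+h(Y)\le h(X)h(Y)$, equivalently $(h(X)-1)(h(Y)-1)\ge 1$. This holds whenever both $h(X)\ge 2$ and $h(Y)\ge 2$, which covers the generic situation and already makes the bound sharp, as $S^2\times S^2$ realizes equality.

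The one genuinely delicate point---the place where I expect the argument to need care, rather than a bare multiplicativity computation---is the degenerate case $h(X)=1$ (or symmetrically $h(Y)=1$), in which the auxiliary bound $h(X)+h(Y)\le h(X)h(Y)$ just fails by one. Here I would invoke the standard fact from rational homotopy theory that a simply connected space with $h(X)=1$, i.e. with $\widetilde H_*(X;\Q)=0$, is rationally trivial and hence has $p(X)=0$ (by the rational Hurewicz theorem, the vanishing of all reduced rational homology forces the vanishing of all rational homotopy). Then $p(X)+p(Y)=p(Y)\le h(Y)=h(X)\cdot h(Y)$, so the inequality still holds. Combining this with the case $h(X),h(Y)\ge 2$ completes the proof of the two-factor statement, and the induction then gives the general case.
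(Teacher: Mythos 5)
Your proof is correct and follows essentially the same route as the paper: rational ellipticity via additivity of $\pi_*\otimes\Q$ and K\"unneth multiplicativity of $H_*(\,\cdot\,;\Q)$, the arithmetic ``sum $\leq$ product'' inequality (the paper's Lemma \ref{lemma}, whose inductive proof is left to the reader and which you carry out explicitly via the two-factor case and $(h(X)-1)(h(Y)-1)\geq 1$), and the Whitehead--Serre theorem to force $p(X)=0$ in the degenerate case $h(X)=1$. Your write-up in fact fills in the induction the paper omits, but the underlying argument is the same.
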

%%%%%%%%%%
\begin{proof}
Rational ellipticity of $X_1 \times \cdots \times X_m$ follows from homotopy being ``additive" and homology being ``multiplicative":
$$\dim (\pi_*(X_1 \times \cdots \times X_m) \otimes \Q) = \dim (\pi_*(X_1)\otimes \Q) + \cdots +  \dim (\pi_*(X_m)\otimes \Q) < \infty.$$
$$\dim H_*(X_1 \times \cdots \times X_m; \Q) = \dim H_*(X_1;\Q) \times  \cdots \times  \dim H_*(X_m;\Q) < \infty.$$
$X_1 \times \cdots \times X_m$ satisfying the Hilalic conjecture, i.e., 
$$\dim (\pi_*(X_1 \times \cdots \times X_m) \otimes \Q)  \leq \dim H_*(X_1 \times \cdots \times X_m; \Q),$$
which follows from Lemma \ref{lemma} (the proof of which is by induction, left for the reader) below.
\end{proof}
%%%%%%%%%%%
\begin{lem}\label{lemma}
Let $a_i, b_i$ ($1 \leq i \leq m$) be real numbers such that $a_i \leq b_i$ and $b_i \geq 1$. If either $2 \leq b_i$ or $0=a_i <b_i=1$ for each $i$, then $a_1 + \cdots +a_m \leq b_1 \times \cdots \times b_m$.
\end{lem}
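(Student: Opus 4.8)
The plan is to argue by induction on $m$, but the key preliminary observation is that the two alternatives in the hypothesis play very different roles and must be separated. Indices $i$ with $0 = a_i < b_i = 1$ are \emph{inert}: they contribute $0$ to the sum $a_1 + \cdots + a_m$ and the factor $1$ to the product $b_1 \times \cdots \times b_m$, hence deleting all of them changes neither side of the desired inequality. So I would first discard every such index. If no index survives, then $a_1 + \cdots + a_m = 0$ while $b_1 \times \cdots \times b_m = 1$, and the inequality $0 \leq 1$ holds. Otherwise we are reduced to the case in which $b_i \geq 2$ for every surviving $i$.

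In that reduced case, since $a_i \leq b_i$ for each $i$, it suffices to prove the sharper and purely ``homological'' inequality
\begin{equation*}
b_1 + \cdots + b_m \leq b_1 \times \cdots \times b_m \quad \text{whenever } b_i \geq 2 \text{ for all } i.
\end{equation*}
This I would prove by a second induction on the number of factors. The base case $m = 1$ is the equality $b_1 = b_1$. For the inductive step, writing $P = b_1 \times \cdots \times b_{m-1}$, the inductive hypothesis gives $b_1 + \cdots + b_{m-1} \leq P$, and since $m \geq 2$ we have $P \geq 2$. It then remains to check $P + b_m \leq P \cdot b_m$, which is equivalent to $(P - 1)(b_m - 1) \geq 1$; this holds because both factors are at least $1$. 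Combining, $b_1 + \cdots + b_m \leq P + b_m \leq P \cdot b_m$, as required.

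The point I expect to be the main obstacle --- and the reason the hypothesis is phrased as a dichotomy rather than simply $b_i \geq 1$ --- is that a single naive induction on the original statement fails. Indeed, from $a_1 + \cdots + a_{m-1} \leq B$ and $a_m \leq b_m$ one only gets $a_1 + \cdots + a_m \leq B + b_m$, and the needed step $B + b_m \leq B \cdot b_m$ is \emph{false} when $B = 1$ (for instance $B = 1$, $b_m = 2$ gives $3 > 2$). The product can stall at $1$ precisely through factors $b_i = 1$, so these must be stripped out at the outset; the condition $0 = a_i < b_i = 1$ guarantees exactly that such factors carry no mass in the sum either. Once all remaining factors are $\geq 2$, the product grows fast enough and the clean inequality $(P-1)(b_m - 1) \geq 1$ closes the argument.
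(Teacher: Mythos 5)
Your proof is correct, and it follows exactly the route the paper intends: the paper gives no written argument at all, stating only that the proof ``is by induction, left for the reader.'' You have supplied that induction properly, and your preliminary step of stripping out the inert indices with $0 = a_i < b_i = 1$ before inducting on the factors with $b_i \geq 2$ is precisely the care the hypothesis's dichotomy is designed to enable --- as your counterexample $B=1$, $b_m=2$ shows, the naive induction stalls on factors equal to $1$, so your reduction is not optional but the essential content of the exercise.
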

$``0=a_i <b_i=1"$ comes from the following:
$$b_i=\dim H_*(X_i;\Q)=1 \Longrightarrow a_i=\dim (\pi_*(X_i)\otimes \Q)=0$$
which  follows from the following Serre Theorem (usually called Whitehead--Serre Theorem):
\begin{thm}\label{serre} If $f:X \to Y$ is a continuous map of simply connected spaces, the following are equivalent
\begin{enumerate}
\item $\pi_*(f)\otimes \Q:\pi_*(X)\otimes \Q \cong \pi_*(Y)\otimes \Q$ is an isomorphism.
\item $H_*(f;\Q):H_*(X;\Q) \cong H_*(Y;\Q)$ is an isomorphism.
\end{enumerate}
\end{thm}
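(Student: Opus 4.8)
The plan is to deduce the theorem from Serre's theory of homology \emph{modulo a class} $\mathcal{C}$, taking $\mathcal{C}$ to be the class of torsion abelian groups, that is, precisely those $A$ with $A \otimes \Q = 0$; this stays within the classical (non-model-theoretic) framework of \cite{Se} that the introduction advertises. First I would reduce to the case of an inclusion. Replacing $Y$ by the mapping cylinder $M_f$ of $f$, which is homotopy equivalent to $Y$, turns $f$ into a cofibration $X \hookrightarrow M_f$; since a homotopy equivalence induces isomorphisms on both $\pi_*(-)\otimes\Q$ and $H_*(-;\Q)$, neither condition (1) nor condition (2) is affected. Thus I may assume $f$ is an inclusion and work throughout with the pair $(Y,X)$.

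Next I would translate both conditions into the vanishing of relative rational invariants. Because $X$ and $Y$ are simply connected, the relative homotopy groups $\pi_n(Y,X)$ are abelian for all $n \geq 2$ (in degree $2$ the group $\pi_2(Y,X)$ is a quotient of the abelian group $\pi_2(Y)$, since $\pi_1(X)=0$ forces $\pi_2(Y)\to\pi_2(Y,X)$ to be onto). The long exact homotopy sequence of the pair is then an exact sequence of abelian groups, and tensoring it with the flat $\Z$-module $\Q$ keeps it exact; hence condition (1) is equivalent to $\pi_n(Y,X)\otimes\Q = 0$ for all $n$, i.e. to $\pi_n(Y,X) \in \mathcal{C}$ for all $n$. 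Likewise the long exact homology sequence of $(Y,X)$ with $\Q$-coefficients, together with $H_n(Y,X;\Q)\cong H_n(Y,X;\Z)\otimes\Q$, shows that condition (2) is equivalent to $H_n(Y,X;\Z) \in \mathcal{C}$ for all $n$. So the theorem reduces to the single assertion that, for such a pair, all $\pi_n(Y,X)$ lie in $\mathcal{C}$ if and only if all $H_n(Y,X;\Z)$ lie in $\mathcal{C}$.

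The heart of the argument is the relative Hurewicz theorem modulo $\mathcal{C}$. One checks first that the torsion abelian groups form a Serre class: they are closed under subgroups, quotients and extensions, and $A,B \in \mathcal{C}$ imply $A \otimes B,\ \operatorname{Tor}(A,B) \in \mathcal{C}$. The mod-$\mathcal{C}$ relative Hurewicz theorem then states that if $\pi_i(Y,X) \in \mathcal{C}$ for all $i < n$, then $H_i(Y,X;\Z) \in \mathcal{C}$ for $i < n$ and the Hurewicz homomorphism $\pi_n(Y,X) \to H_n(Y,X;\Z)$ is a $\mathcal{C}$-isomorphism (torsion kernel and cokernel). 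Feeding this into an induction on $n$ yields both implications at once: if every $\pi_i(Y,X)$ is torsion then every $H_i(Y,X;\Z)$ is torsion, and conversely the $\mathcal{C}$-isomorphism statement promotes ``$H_i$ torsion for $i < n$'' to ``$\pi_n$ torsion'', so that $H_*(Y,X;\Z) \in \mathcal{C}$ forces $\pi_*(Y,X) \in \mathcal{C}$.

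The main obstacle is the mod-$\mathcal{C}$ Hurewicz theorem itself, which I would not reprove but only invoke from \cite{Se}. Its proof proceeds by induction via the Serre spectral sequence, and the genuinely delicate input is the computation that the reduced homology $\widetilde{H}_*(K(A,m);\Z)$ of an Eilenberg--MacLane space lies in $\mathcal{C}$ whenever $A \in \mathcal{C}$. One also has to pass from the absolute statement to the relative one (for instance by replacing the pair with the homotopy fibre of $X \to Y$ up to the usual connectivity shift) and to absorb the low-degree nonabelian bookkeeping, which the simple connectivity of $X$ and $Y$ renders harmless.
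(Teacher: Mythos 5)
You should first be aware that the paper does not prove Theorem \ref{serre} at all: it records it as the classical (Whitehead--)Serre theorem and uses it as a black box, the only nearby proof-like text being the application that deduces $\dim(\pi_*(X)\otimes\Q)=0$ from $\dim H_*(X;\Q)=1$. So there is no in-paper argument to compare yours against, and your proposal must be judged on its own; on that score it is the standard classical proof and it is essentially correct. The reductions are all sound: the mapping-cylinder replacement affects neither condition; your argument that $\pi_2(Y,X)$ is abelian (surjectivity of $\pi_2(Y)\to\pi_2(Y,X)$ forced by $\pi_1(X)=0$) is right; flatness of $\Q$ plus universal coefficients correctly convert conditions (1) and (2) into ``$\pi_n(Y,X)\otimes\Q=0$ for all $n$'' and ``$H_n(Y,X;\Z)$ torsion for all $n$'' respectively; and the two-way induction through the mod-$\mathcal{C}$ relative Hurewicz theorem is the right engine, with the extension argument ($\pi_n$ an extension of a subgroup of $H_n\in\mathcal{C}$ by a kernel in $\mathcal{C}$) using exactly the closure properties you listed. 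Two fine points are worth flagging. First, the relative Hurewicz theorem mod $\mathcal{C}$ requires the \emph{stronger} tensor axiom $A\in\mathcal{C}\Rightarrow A\otimes B\in\mathcal{C}$ for arbitrary abelian $B$, not merely $B\in\mathcal{C}$ as you verified: in the fibration comparison the fiber homology gets tensored against the homology of the base, which need not lie in $\mathcal{C}$. The torsion class does satisfy the stronger axiom (a torsion group tensored with anything is torsion), so nothing breaks, but your verification as written checks only the weaker condition. Second, invoking the mod-$\mathcal{C}$ Hurewicz theorem and the torsion computation for $\widetilde{H}_*(K(A,m);\Z)$ as black boxes from \cite{Se} is legitimate here, since that is precisely where the theory lives and the paper itself treats the result as classical. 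For contrast, the modern textbook proof (e.g.\ \cite[Theorem 8.6]{FHT}, which establishes the sharper version over any $R\subset\Q$) runs an induction over relative CW models and fibrations rather than Serre classes; your route has more elementary prerequisites but yields only the $\Q$-statement, which is all this paper needs.
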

Indeed, $\dim H_*(X;\Q)=1$, i.e., $H_0(X;\Q)=\Q$ and $H_i(X;\Q)=0 (i \geq 1)$, implies that $H_*(a_X;\Q):H_*(X;\Q)=\Q \cong H_*(pt;\Q)=\Q$ for a constant map $a_X:X \to pt$. 
Then the above Serre Theorem implies the homotopy isomorphism $\pi_*(a_X) \otimes \Q:\pi_*(X)\otimes \Q \cong \pi_*(pt)\otimes \Q=0$, i.e., $\dim (\pi_*(X_i)\otimes \Q)=0$.

\begin{ex} It follows from Proposition \ref{prop} above that 
$$
\text{$S^{n_1} \times \cdots \times S^{n_k}$, $\mathbb {CP}^{m_1} \times \cdots \times \mathbb {CP}^{m_s}$ and $S^{n_1} \times \cdots \times S^{n_k} \times \mathbb {CP}^{m_1} \times \cdots \times \mathbb {CP}^{m_s}$ }$$are rationally elliptic and satisfies the Hilali conjecture.
\end{ex}

\begin{ex}
A simply connected compact Lie group $G$ is rationally elliptic and satisfies the Hilali conjecture.
Because it is well-known (by Heinz Hopf) that $G$ is rationally homotopy equivalent to the product of odd-dimensional spheres, i.e., there is a map $f:G \to S^{2k_1+1} \times \cdots \times S^{2k_n +1}$ such that 
$\pi_*(f)\otimes \Q: \pi_*(G)\otimes \Q \cong \pi_*(S^{2k_1+1} \times \cdots \times S^{2k_n +1})\otimes \Q.$
Hence, by Theorem \ref{serre} (Serre theorem), $H_*(f;\Q): H_*(G;\Q) \cong H_*(S^{2k_1+1} \times \cdots \times S^{2k_n +1};\Q).$
\end{ex}

We note that $\mathbb {CP}^{m_1} \times \cdots \times \mathbb {CP}^{m_s}$ is a rationally elliptic \emph{K\"ahler manifold} and also a rationally elliptic \emph{smooth toric variety}. We will come back to rationally elliptic K\"ahler manifolds and smooth toric varieties later again.

{}
%%%%%%%%
%%%%%%%%%%%%%%
\subsection {Rationally elliptic singular varieties for which the Hilali conjecture holds.}
 $\mathbb {CP}^{m_1} \times \cdots \times \mathbb {CP}^{m_s}$ is a non-singular complex algebraic variety. How about a singular complex algebraic variety?
\begin{ex} A cuspidal curve is a singular curve whose singularities are cusps. A cuspidal curve is homeomorphic to $\mathbb {CP}^1$. Hence, it is a rationally elliptic and satisfies the Hilali conjecture.
\end{ex}
\begin{rem}
A classification of cuspidal curves seems to be still open. Mariusz Koras and Karol Palka \cite{KP} have recently proved that a cuspidal curve can have \emph{at most 4 cusps.}
\end{rem}

A cuspidal curve is a singular curve which is \emph{homeomorphic to $\mathbb {CP}^1$}. 
%%%%%%%%%
\begin{qu}\label{qu-p^n}
How about a singular complex algebraic variety\footnote{Note that this variety is not \emph{a fake projective space}, which is a non-singular complex algebraic variety which has the same Betti numbers as a complex projective space, but not isomorphic to it.} which is homeomorphic to $\mathbb {CP}^n \, \, (n \geq 2)$ or homotopic to $\mathbb {CP}^n \, \, (n \geq 1)$?
\end{qu}
%%%%%%%%%%%
As to $\mathbb {CP}^2$, Lawrence Brenton \cite{Br}\footnote{The author asked Alex Dimca for some work concerning Quesion \ref{qu-p^n} and then he immediately answered with his related works \cite{BD, CD}. In the reference of \cite{CD} is cited Brenton's paper \cite{Br}. The author would like to thank A. Dimca for sending these two papers \cite{BD, CD}.} has constructed singular surfaces which are homotopic to  $\mathbb {CP}^2$. As far as the author knows, there seems to be no such a paper available for $\mathbb {CP}^n$ with $n \geq 3$.

\begin{ex}
Let $C_1, \cdots, C_s$ are cuspidal curves and $B_1 , \cdots, B_t$ be Brenton's singular surfaces. Then $C_1 \times \cdots \times C_s$ and $B_1 \times  \cdots \times B_t$ are respectively $s$-dimensional and $2t$-dimensional singular varieties which are rationally elliptic and also satisfies the Hilali conjecture.  
$\mathbb {CP}^{m_1} \times \cdots \times \mathbb {CP}^{m_r} \times C_1 \times \cdots \times C_s \times B_1 \times \cdots \times B_t$ is a $2(m_1 + \cdots +m_r) +s +2t$-dimensional rationally elliptic singular variety and also satisfies the Hilali conjecture.
\end{ex}
These examples are trivial ones. Here is a naive question:

\begin{qu}\label{qu-char}
Is there a characterization of a rationally elliptic singular complex algebraic variety which satisfies the Hilali conjecture?
\end{qu}
As to the above Question \ref{qu-char}, we would like to pose the following very naive or simple-minded question:
%%%%%%%
\begin{qu} Is it correct that a rationally elliptic complex algebraic variety is always homeomorphic or homotopic to the product of complex projective spaces $\mathbb {CP}^{m_1} \times \cdots \times \mathbb {CP}^{m_s}$?
\end{qu}
%%%%%%%
We want to or should close this section with the following famous theorem. If we consider a \emph{nonsingular} surface which is homotopic to $\mathbb {CP}^2$, we have the following famous theorem due to Shing-Tung Yau \cite{Yau}, which is an answer to Severi's old problem\footnote{Francesco Severi posed the question of whether there was a complex surface homeomorphic to $\mathbb {CP}^2$ but not biholomorphic to it. So, S.-T. Yau gave a negative answer even in a weaker version of ``homeomorphic" being replaced by ``homotopic".} \cite{Severi} :
{}
\begin{thm}[S.-T. Yau] 
Any complex surface which is homotopic to $\mathbb {CP}^2$ is biholomorphic to $\mathbb {CP}^2$.
\end{thm}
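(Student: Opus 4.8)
The plan is to reduce the statement to the Enriques--Kodaira classification of compact complex surfaces, feeding in the characteristic numbers forced by the homotopy type and invoking Yau's solution of the Calabi conjecture to dispose of the one dangerous case. First I would extract the numerical consequences of a homotopy equivalence $X \simeq \mathbb{CP}^2$. This gives $\pi_1(X)=0$ together with the Betti numbers $b_0=b_2=b_4=1$, $b_1=b_3=0$, and the intersection form on $H^2$ equal to $\langle 1\rangle$; hence the signature is $\sigma=1$ and the topological Euler characteristic is $\chi=3$, so $c_2(X)=3$. Since $b_1=0$ is even, $X$ is Kähler (Kodaira, Miyaoka, Siu), whence the irregularity is $q=\tfrac12 b_1=0$. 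Hirzebruch's signature formula $\sigma=\tfrac13(c_1^2-2c_2)$ then forces $c_1^2=9$, and Noether's formula $\chi(\mathcal{O}_X)=\tfrac{1}{12}(c_1^2+c_2)=1$ together with $q=0$ yields the geometric genus $p_g=0$. In particular $c_1^2=9=3c_2$, so $X$ realises \emph{equality} in the Bogomolov--Miyaoka--Yau inequality, and $K_X^2=c_1^2=9\neq 0$.

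Next I would observe that $X$ is minimal: since $b_2(X)=1$, it cannot be the blow-up of another surface, as blowing down a $(-1)$-curve would require $b_2\geq 2$ to begin with. With minimality and $K_X^2=9$ in hand I would run the classification according to the Kodaira dimension $\kappa(X)$. A minimal surface with $\kappa(X)=0$ has numerically trivial canonical class, and a minimal properly elliptic surface ($\kappa(X)=1$) likewise has $K_X^2=0$; both contradict $K_X^2=9$, so these cases are excluded outright.

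The decisive case is $\kappa(X)=2$, surfaces of general type, and here the deep input enters. By Yau's resolution of the Calabi conjecture, a minimal surface of general type satisfying the equality $c_1^2=3c_2$ admits a Kähler--Einstein metric of negative scalar curvature whose universal cover is the complex two-ball $\mathbb{B}^2$; thus $X=\mathbb{B}^2/\Gamma$ is aspherical with infinite fundamental group $\Gamma$. This contradicts $\pi_1(X)=0$, and it is exactly this step that explains why the hypothesis ``homotopic'' (rather than merely ``same Betti numbers'') is indispensable: the fake projective planes are precisely such ball quotients sharing the rational cohomology of $\mathbb{CP}^2$. Consequently $\kappa(X)=-\infty$, and since $q=0$ the surface $X$ is rational; as the minimal rational surfaces are $\mathbb{CP}^2$ and the Hirzebruch surfaces $\mathbb{F}_n$, the latter having $b_2=2$ while $b_2(X)=1$, I conclude $X$ is biholomorphic to $\mathbb{CP}^2$.

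I expect the main obstacle to be the equality case of the Bogomolov--Miyaoka--Yau inequality: proving that $c_1^2=3c_2$ for a general-type surface forces the ball-quotient structure requires the full analytic strength of the Calabi--Yau theorem, namely the existence and uniqueness of solutions to a complex Monge--Amp\`ere equation and the subsequent identification of the resulting Kähler--Einstein metric with a complex space form. Everything else in the argument is bookkeeping with characteristic numbers, Noether's and Hirzebruch's formulas, and the Enriques--Kodaira classification.
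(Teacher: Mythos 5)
The paper does not prove this theorem: it is quoted as a known result with a citation to Yau's 1977 PNAS announcement, so there is no ``paper proof'' to compare against. What you have written is, in essence, Yau's original deduction of the Severi statement from the Calabi conjecture, and it is correct in outline: homotopy invariants give $c_1^2=9$, $c_2=3$, $q=p_g=0$; minimality follows from $b_2=1$; Kodaira dimensions $0$ and $1$ die on $K_X^2=9$; general type dies on the equality case $c_1^2=3c_2$ of Bogomolov--Miyaoka--Yau, which forces a ball quotient with infinite $\pi_1$; and $\kappa=-\infty$ with $q=0$ leaves only $\mathbb{CP}^2$ among minimal models with $b_2=1$. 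Your remark that this is exactly where ``homotopy equivalent'' (as opposed to ``same Betti numbers'') is indispensable, fake projective planes being the obstruction, is the right conceptual point.

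Two small steps deserve explicit justification. First, a homotopy equivalence need not preserve orientation, so a priori the intersection form could be $\langle -1\rangle$ and $\sigma=-1$; this is repaired by Kählerness, since a compact Kähler surface has $b^{+}=1+2p_g\geq 1$, which with $b_2=1$ forces $b^{+}=1$, $b^{-}=0$, $\sigma=+1$ \emph{in the complex orientation} (and then $p_g=0$ drops out here as well). Second, Yau's Kähler--Einstein theorem in the equality case requires $K_X$ \emph{ample}, not merely $X$ minimal of general type: a minimal general type surface can carry $(-2)$-curves, and then $K_X$ is only nef and big. In your situation this is again rescued by $b_2=1$: the N\'eron--Severi group has rank one, so a nef and big canonical class is automatically ample (no curve of nonpositive self-intersection can exist). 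With those two sentences added, your argument is complete and is the standard proof of the theorem the paper cites.
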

\begin{rem}
It is quite natural to ask if a similar statement holds for $\mathbb {CP}^n$ with $n \geq 3$. As far as the author knows by literature search, there seems to be no result available for this naive question.
\end{rem}
%%%%%%
%%%%%%%%%%%%%%%%%%%%%%%%
\section{Fundamental properties of rationally elliptic spaces}
In this section we recall some fundamental properties of rationally elliptic spaces for later use of them.

\subsection {S. Halperin's Theorems}
$$\chi^{\pi}(X):=\sum_k (-1)^k \dim (\pi_k(X)\otimes \Q) = \dim (\pi_{\op{even}}(X)\otimes \Q) - \dim (\pi_{\op{odd}}(X)\otimes \Q)$$
is called \emph{homotopy Euler--Poincar\'e characteristic}, which is surely a homotopy version of Euler--Poincar\'e characteristic:
$$\chi(X)=\sum_k (-1)^k \dim (H_k(X;\Q) = \dim H_{\op{even}}(X;\Q) - \dim H_{\op{odd}}(X;\Q).$$
First let us look at those of $S^n$ and $\mathbb {CP}^n$ (see Examples \ref{sphere} and \ref{proj} above).
$$\text{$\chi^{\pi}(S^{2k})=(-1)^{2k}+(-1)^{4k-1}=0$ \quad \text{and} \quad $\chi(S^{2k})=1+(-1)^{2k}=2 >0$}$$
$$\text{$\chi^{\pi}(S^{2k+1})=(-1)^{2k+1}=-1<0$ \quad \text{and} \quad $\chi(S^{2k+1})=1+(-1)^{2k+1}=0$.}$$
$$\text{$\chi^{\pi}(\mathbb {CP}^n)=(-1)^{2}+(-1)^{2n+1}=0$ \quad \text{and} \quad $\chi(\mathbb {CP}^n)=1+\sum_{i=1}^n (-1)^{2i}=1+n>0.$} $$
Hence we have
$$\text{$\chi^{\pi}(S^{n_1} \times \cdots \times S^{n_j} \times \mathbb {CP}^{m_1} \times \cdots \times \mathbb {CP}^{m_s})\leq 0$ and} \hspace{4cm}$$
$$\hspace{6cm}\chi(S^{n_1} \times \cdots \times S^{n_j} \times \mathbb {CP}^{m_1} \times \cdots \times \mathbb {CP}^{m_s}) \geq 0$$
$$\text{$\chi^{\pi}(S^{2n_1} \times \cdots \times S^{2n_j} \times \mathbb {CP}^{m_1} \times \cdots \times \mathbb {CP}^{m_s}) =0$ and }\hspace{4cm}$$
$$\hspace{6cm} \chi(S^{2n_1} \times \cdots \times S^{2n_j} \times \mathbb {CP}^{m_1} \times \cdots \times \mathbb {CP}^{m_s})>0.$$
As observed above, $S^{n_1} \times \cdots \times S^{n_j} \times \mathbb {CP}^{m_1} \times \cdots \times \mathbb {CP}^{m_s}$ is rationally elliptic. It turns out that this property ``$\chi^{\pi}(X) \leq 0$ and $\chi(X) \geq 0$" always holds for any rationally elliptic space $X$, as proved by 
S. Halperin \cite{Hal}: 
\begin{thm}\cite[Theorem 1, p.174]{Hal}\label{hal-1}
Let $X$ be a rationally elliptic space. Then $\chi^{\pi}(X) \leq 0$ and $\chi(X) \geq 0$. Moreover, the following are equivalent:
\begin{enumerate}
\item  $\chi^{\pi}(X)=0$, 
\item $\chi(X) >0,$
\item $H_{odd}(X)\otimes \Q=0$.
\end{enumerate}
\end{thm}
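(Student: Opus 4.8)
The plan is to pass to the minimal Sullivan model and reduce the whole statement to a computation in commutative algebra. Since $X$ is simply connected and rationally elliptic it admits a minimal Sullivan model $(\Lambda V,d)$ with $V=V^{\op{even}}\oplus V^{\op{odd}}$ finite-dimensional and with $H^*(\Lambda V,d)\cong H^*(X;\Q)$ finite-dimensional. The defining property of the minimal model gives $\dim V^k=\dim(\pi_k(X)\otimes\Q)$, so that, writing $q:=\dim V^{\op{even}}$ and $p:=\dim V^{\op{odd}}$, one has $\chi^{\pi}(X)=q-p$, and the assertion $\chi^{\pi}(X)\le 0$ becomes the purely numerical inequality $p\ge q$. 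I would note at the outset that, although the paper deliberately avoids Sullivan's theory in its exposition, there seems to be no way around minimal models here: this is precisely the level at which the ``stringent restrictions'' of ellipticity become visible.

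First I would replace $(\Lambda V,d)$ by its associated pure model $(\Lambda V,d_{\sigma})$, defined by $d_{\sigma}|_{V^{\op{even}}}=0$ and, for $x\in V^{\op{odd}}$, by letting $d_{\sigma}x$ be the component of $dx$ lying in $\Lambda V^{\op{even}}$. The word-length filtration yields a spectral sequence converging from $H^*(\Lambda V,d_{\sigma})$ to $H^*(\Lambda V,d)$; since the Euler characteristic is invariant through a spectral sequence, and since ellipticity of $(\Lambda V,d)$ is equivalent to that of $(\Lambda V,d_{\sigma})$, both $\chi(X)$ and finiteness of the cohomology may be computed on the pure model. Now $(\Lambda V,d_{\sigma})=(R\otimes\Lambda V^{\op{odd}},d_{\sigma})$ with $R:=\Lambda V^{\op{even}}=\Q[y_1,\dots,y_q]$ a polynomial ring and $d_{\sigma}x_i=f_i\in R$, which is exactly the Koszul complex of the sequence $f_1,\dots,f_p$. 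Finite-dimensionality of the homology forces the ideal $(f_1,\dots,f_p)$ to be $\mathfrak m$-primary, hence of height equal to the Krull dimension $q$ of $R$; since an ideal of height $q$ cannot be generated by fewer than $q$ elements up to radical, this already yields $p\ge q$, i.e. $\chi^{\pi}(X)\le 0$.

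With $p\ge q$ in hand, I would extract (after replacing the $f_i$ by suitable combinations if necessary) a regular sequence of length $q$, say $f_1,\dots,f_q$, a system of parameters, so that the complete intersection $\bar R:=R/(f_1,\dots,f_q)$ is a finite-dimensional graded algebra concentrated in even degrees and of strictly positive total dimension. The remaining $p-q$ generators then contribute a Koszul complex over $\bar R$ on the now-nilpotent classes $\bar f_{q+1},\dots,\bar f_p$, and two cases arise. If $p=q$, the homology is exactly $\bar R$, living in even degrees, so $H_{\op{odd}}(X)\otimes\Q=0$ and $\chi(X)=\dim_{\Q}\bar R>0$. If $p>q$, each of the at least one surviving odd generators contributes an exterior factor whose graded Euler characteristic is $1+(-1)=0$; as the Euler characteristic is multiplicative over such tensor factors and invariant under the differential, $\chi(X)=0$, while the surviving odd generators force $H_{\op{odd}}(X)\otimes\Q\ne 0$. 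In either case $\chi(X)\ge 0$, and reading off this dichotomy produces the chain of equivalences $\chi^{\pi}(X)=0\Leftrightarrow p=q\Leftrightarrow\chi(X)>0\Leftrightarrow H_{\op{odd}}(X)\otimes\Q=0$.

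The main obstacle is the reduction to the pure model together with its two structural inputs: that ellipticity is inherited by $(\Lambda V,d_{\sigma})$ and that the Euler characteristic is unchanged under the passage $d\rightsquigarrow d_{\sigma}$ (this is the genuinely non-formal heart, going back to Friedlander and Halperin), and the commutative-algebra step identifying the height of $(f_1,\dots,f_p)$ with $q$ so as to force $p\ge q$. Once these are secured, the case analysis on the sign of $p-q$ is routine Koszul-complex bookkeeping.
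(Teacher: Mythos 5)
Your overall architecture --- minimal model, reduction to the pure model $(\Lambda V,d_{\sigma})$, Krull's height theorem to force $p\ge q$, and a regular-sequence computation when $p=q$ --- is exactly the classical Halperin/Friedlander--Halperin route; note that the paper itself does not prove this theorem but only cites Halperin \cite{Hal}, so the comparison is with that standard argument, and modulo the structural inputs you explicitly flag (ellipticity of the pure model, invariance of $\chi$ under the odd spectral sequence) your $p=q$ case is essentially sound. One small but real omission there: from $H(\Lambda V,d_{\sigma})$ concentrated in even degrees you immediately conclude $H_{odd}(X)\otimes \Q=0$, but invariance of the Euler characteristic alone does not transfer statement (3) back to $(\Lambda V,d)$; you need the one-line remark that the odd spectral sequence then has $E_1$ concentrated in even total degrees, so all its differentials (which raise total degree by $1$) vanish, the sequence degenerates, and $H(\Lambda V,d)$ is itself evenly concentrated.

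The genuine gap is the case $p>q$. You cannot in general ``extract (after replacing the $f_i$ by suitable combinations) a regular sequence of length $q$'' compatibly with the Koszul structure: the degrees of the $x_i$ are fixed, so only degree-preserving changes of basis of $V^{\op{odd}}$ are available (combinations of $f_i$ of different degrees are not even homogeneous), and the weaker statement that some $q$-element subset of $\{f_1,\dots,f_p\}$ is a system of parameters is false --- e.g.\ in $R=\Q[y_1,y_2]$ the elements $f_1=y_1y_2$, $f_2=y_2(y_1+y_2)$, $f_3=y_1(y_1+y_2)$ generate an $\mathfrak{m}$-primary ideal while every pair of them has a common factor and so has height one. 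Graded prime avoidance only produces parameters $g_1,\dots,g_q$ of higher degree inside the ideal, and then the complex does not split as $K(g_1,\dots,g_q)\otimes \Lambda(x_{q+1},\dots,x_p)$, so there is no tensor factorization to which your ``$\chi$ is multiplicative and each exterior factor contributes $1+(-1)=0$'' computation applies. The clean repair is Serre's theorem on Euler characteristics of Koszul complexes: since $d_{\sigma}$ is $R$-linear and strictly decreases exterior word-length by one, $H(\Lambda V,d_{\sigma})$ is graded by the Koszul homological degree $s$, the parity of the total degree equals the parity of $s$, and hence $\chi(X)=\sum_s (-1)^s \dim_{\Q} H_s\bigl(K(f_1,\dots,f_p;R)\bigr)$; Serre's theorem states that this alternating sum equals the multiplicity $e\bigl((f_1,\dots,f_p);R\bigr)>0$ when $p=q=\dim R$ and vanishes when $p>q$, finite-length homology being exactly your $\mathfrak{m}$-primarity. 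This one citation settles both cases at once; and once $\chi(X)=0$ is secured for $p>q$, the nonvanishing $H_{odd}(X)\otimes \Q\ne 0$ follows at once from $\chi(X)=\dim H_{\op{even}}(X;\Q)-\dim H_{\op{odd}}(X;\Q)$ together with $H^0(X;\Q)=\Q$, so your vaguer ``surviving odd generators'' claim is not needed.
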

%%%%%%%%%%%%
\begin{rem}
Thus an affirmative solution of Bott conjecture implies Hopf conjecture.
\end{rem}
%%%%%%%
\begin{rem} The equivalence of (1), (2) and (3) above was posed as a question in Dennis Sullivan's famous paper \cite{Sul}.
 In \cite{Hes} Kathryn Hess  wrote that Halperin's paper was \emph{``the first major paper on the structure and properties of Sullivan models after the work of Sullivan himself"}.
 \end{rem}
%%%%%%%%%%%
\begin{rem}
According to \cite{Hal}, a special case of Theorem \ref{hal-1} was already solved by Henri Cartan \cite{Car} and the proof of Theorem \ref{hal-1} is reduced to this special case.
\end{rem}
%%%%%%%%%%
Next let us look at 
$$\text{$\op{dim}(X)$, \quad Poincar\'e polynomial $P_X(t)= \sum_k  \dim H_k(X;\mathbb Q) \, t^k$ \quad and \quad $\chi(X)$}$$
 of the above examples (cf. Examples \ref{sphere} and \ref{proj}). First we recall the rational homotopy groups of $S^{2k}$ and $\mathbb {CP}^n$ \underline{in the following forms}:

$$\pi_i(S^{2k}) \otimes \Q 
=\begin{cases} 
 \Q & \, i=2k, 2(2k)-1\\
\, 0  & \,  i\not =2k, 2(2k)-1
\end{cases}, \, \, 
\pi_k(\mathbb {CP}^n)\otimes \mathbb Q = \displaystyle 
\begin{cases}
\mathbb Q \quad \text{for $k=2, 2n+1 =2(n+1)-1$,}\\
0 \quad \text{for $k\not = 2, 2n+1=2(n+1)-1$,}\\
\end{cases}
$$
%%%%%%%%%%%
\begin{enumerate}
\item $$\op{dim}(S^{2k})=2k= 2(2k)-1 - (2k-1), P_{S^{2k}}(t)=1+t^{2k} = \frac{1-t^{2(2k)}}{1-t^{2k}}, \chi(S^{2k})= 2 = \frac{2k}{k}, $$
\item $$\op{dim}(\mathbb {CP}^n)=2n= 2(n+1)-1 - (2\cdot 1-1), P_{\mathbb {CP}^n}(t) = 1+\sum_{i=1}^n t^{2i} =\frac{1-t^{2(n+1)}}{1-t^{2\cdot 1}},$$
$$\chi(\mathbb {CP}^n)=n+1= \frac{n+1}{1}.$$
\item 
\begin{align*}
\op{dim} & \left (\prod_{i=1}^k  S^{2n_i} \times \prod_{j=1}^s \mathbb {CP}^{m_j} \right)  =\sum_{i=1}^k 2n_i + \sum_{j=1}^s 2m_j \\
&= \sum_{i=1}^k \{2(2n_i)-1\} + \sum_{j=1}^s \{2(m_j+1)-1 \} - \left (\sum_{i=1}^k \{2\cdot n_i-1\} + \sum_{j=1}^s \{2\cdot 1-1\} \right ).
\end{align*}
\begin{align*}
 & P_{\prod_{i=1}^k S^{2n_i} \times \prod_{j=1}^s \mathbb {CP}^{m_j}} (t)  =\prod_{i=1}^k P_{S^{2n_i}}(t) \prod_{j=1}^s P_{\mathbb {CP}^{m_j}}(t) \\
& \qquad \qquad = \prod_{i=1}^k \frac{1-t^{2(2n_i)}}{1-t^{2n_i}}\prod_{j=1}^s \frac{1-t^{2(m_j+1)}}{1-t^2} 
= \frac{\prod_{i=1}^k (1-t^{2(2n_i)}) \prod_{j=1}^s (1-t^{2(m_j+1)})}{\prod_{i=1}^k (1-t^{2n_i}) \cdot (1-t^{2\cdot 1})^s},
\end{align*}
$$\chi\left (\prod_{i=1}^k S^{2n_i} \times \prod_{j=1}^s \mathbb {CP}^{m_j} \right)=2^k \prod_{j=1}^s (m_j+1) = \frac{\prod_{i=1}^k 2n_i \prod_{j=1}^s (m_j+1)}{\prod_{i=1}^k n_i \prod_{j=1}^s 1}.$$
\end{enumerate}
%%%%%%%%%

In fact, these things hold for any rationally elliptic space, which was proved also by S. Halperin \cite{Hal}, as shown below. Let $X$ be rationally elliptic. 

Let $y_1, \cdots, y_q$ be a basis of $\pi_{\op{odd}}(X)\otimes \Q$ ($\dim(\pi_{\op{odd}}(X)\otimes \Q)=q$) and $x_1, \cdots, x_r$ be a basis of $\pi_{\op{even}}(X)\otimes \Q$ ($\dim(\pi_{\op{even}}(X)\otimes \Q)=r$). If $y_j \in \pi_{2b_j-1}(X)\otimes \Q$ and $x_i \in \pi_{2a_i}(X)\otimes \Q$, $2b_j-1$ and $2a_i$ are called \emph{degrees} of $y_j$ and $x_j$. 
$(b_1, \cdots, b_q)$ and $(a_1, \cdots, a_r)$ are respectively called \emph{$b$-exponents} and \emph{$a$-exponetns} of $X$
(note that they are called ``odd" exponents and ``even" exponents in F\'elix--Halperin--Thomas's book \cite[\S 32 Elliptic spaces, Definition, p.441]{FHT}).
E.g., for $X=\prod_{i=1}^k S^{2n_i} \times \prod_{j=1}^s \mathbb {CP}^{m_j}$, we have
$\pi_{2\cdot 2n_1-1}(X)\otimes \mathbb Q= \cdots =\pi_{2\cdot 2n_k-1}(X)\otimes \mathbb Q = \pi_{2(m_1+1)-1}(X)\otimes \mathbb Q= \cdots =\pi_{2(m_s+1)-1}(X)\otimes \mathbb Q =\mathbb Q$ and 
$\pi_{2n_1}(X)\otimes \mathbb Q= \cdots =\pi_{2n_k}(X)\otimes \mathbb Q = \mathbb Q, \quad  \pi_{2\cdot 1}(X)\otimes \mathbb Q= \cdots =\underbrace {\mathbb Q \oplus \cdots \mathbb Q}_{s}$.
$(2n_1,\cdots,2n_k,m_1+1,\cdots,m_s+1)$ are $b$-exponents and 
$(n_1,\cdots,n_k,\underbrace{1,\cdots,1}_s)$ are $a$-exponents of $X$.

The largest integer $n_X$ such that $H_{n_X}(X;\Q) \not =0$ is called \emph{formal dimension} of $X$.
%%%%%%%%%%
\begin{thm}\label{Halp-2}\cite[Theorem 3', p.188, Corollary 2, p.198]{Hal} 
\label{str}  Let the symbols be as above.
\begin{enumerate}
\item \label{b} 

$n_X=
\sum_{j=1}^q (2b_j-1) -\sum_{i=1}^r (2a_i-1)$ 
\item \label{poincare} Betti numbers $\beta_i =\dim H_i(X;\mathbb Q)$ satisfy Poincar\'e duality, i.e., $\beta_i = \beta_{n_X -i}$.
\item \label{6} In the case when $\chi^{\pi}(X)=q-r=0$ (thus $\chi(X)>0$), i.e., $q=r$, Poincar\'e polynomial $P_X(t)= \sum_k  \dim H_k(X;\mathbb Q) \, t^k$ of $X$ is expressed by $b$-exponents and $a$-exponents:
$$P_X(t)= \frac{\prod_{i=1}^q (1 - t^{2b_i})}{\prod_{i=1}^q (1 - t^{2a_i})}.$$ 
In particular,
\begin{equation}\label{b/a}
\chi(X)= P_X(-1) = P_X(1)= \dim (H_*(X)\otimes \mathbb Q) =\frac{ \prod_{i=1}^q b_i}{ \prod_{i=1}^q a_i}.
\end{equation} 
\end{enumerate}
\end{thm}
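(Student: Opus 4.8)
The plan is to pass to the minimal Sullivan model of $X$ and to exploit the special structure of elliptic models; although the paper's exposition deliberately avoids model theory, this is the most natural route to all three assertions. Since $X$ is simply connected and rationally elliptic, it has a minimal Sullivan model $(\Lambda V, d)$ with $V = V^{\op{even}} \oplus V^{\op{odd}}$ finite-dimensional, where $V^{\op{even}}$ has a basis $x_1, \dots, x_r$ in degrees $2a_i$ and $V^{\op{odd}}$ has a basis $y_1, \dots, y_q$ in degrees $2b_j - 1$; by Theorem \ref{hal-1} we already know $r \le q$ (as $\chi^{\pi}(X) = r - q \le 0$). The first step is to replace $d$ by its \emph{pure} differential $d_\sigma$, defined by $d_\sigma x_i = 0$ and $d_\sigma y_j = $ the component of $dy_j$ lying in $\Lambda V^{\op{even}}$. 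One checks that $(\Lambda V, d_\sigma)$ is again elliptic, with the same $a$- and $b$-exponents, and that the passage from $d$ to $d_\sigma$ preserves the formal dimension and both Euler characteristics. This reduces every claim to the pure algebra, where $\Lambda V^{\op{even}} = \Q[x_1, \dots, x_r]$ is a genuine polynomial ring and each $d_\sigma y_j \in \Q[x_1, \dots, x_r]$ has degree $2b_j$.

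I would dispatch part (\ref{6}) first, as it is the cleanest. When $q = r$ (equivalently $\chi^{\pi}(X) = 0$, equivalently $H_{\op{odd}}(X)\otimes\Q = 0$ by Theorem \ref{hal-1}), finiteness of the quotient of a polynomial ring in $q$ variables by $q$ elements forces $d_\sigma y_1, \dots, d_\sigma y_q$ to be a \emph{regular sequence}, so that
$$H^*(X;\Q) \cong \Q[x_1, \dots, x_q]/(d_\sigma y_1, \dots, d_\sigma y_q)$$
is a graded complete intersection. The Poincar\'e series of such a complete intersection with variables in degrees $2a_i$ and relations in degrees $2b_j$ is the standard Koszul quotient, which is precisely the formula asserted in part (\ref{6}). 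Factoring $1 - t^{2c} = (1-t)(1 + t + \cdots + t^{2c-1})$ in numerator and denominator and letting $t \to 1$ cancels the $q$ factors of $(1-t)$ on each side and yields $P_X(1) = \prod_j 2b_j / \prod_i 2a_i = \prod_j b_j / \prod_i a_i$, giving (\ref{b/a}); that $P_X(-1) = P_X(1) = \dim H_*(X;\Q)$ just reflects the vanishing of odd cohomology.

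For parts (\ref{b}) and (\ref{poincare}) in the general case $q \ge r$, I would organize the odd generators so that $d_\sigma y_1, \dots, d_\sigma y_r$ is a regular sequence in $\Q[x_1, \dots, x_r]$ and the remaining $q-r$ classes $y_{r+1}, \dots, y_q$ contribute a free exterior factor over the complete intersection $R := \Q[x_1, \dots, x_r]/(d_\sigma y_1, \dots, d_\sigma y_r)$. The top cohomology class is then $\omega \cdot y_{r+1}\cdots y_q$, where $\omega$ generates the socle of the Gorenstein ring $R$, sitting in degree
$$\Bigl(\sum_{j=1}^r 2b_j - \sum_{i=1}^r 2a_i\Bigr) + \sum_{j=r+1}^q (2b_j - 1) = \sum_{j=1}^q (2b_j - 1) - \sum_{i=1}^r (2a_i - 1) = n_X,$$
which proves (\ref{b}). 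Poincar\'e duality (\ref{poincare}) follows because $R$, being a complete intersection, is a Gorenstein hence Poincar\'e duality algebra, and tensoring with the exterior algebra on the residual odd classes preserves this; the resulting nondegenerate pairing gives $\beta_i = \beta_{n_X - i}$.

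The hard part will be justifying the organization used in the general case $q > r$: that after purification one can actually extract a regular sequence of length $r$ among the $d_\sigma y_j$ and that the residual odd generators genuinely assemble into a \emph{free} exterior factor, or equivalently that the pure model stays elliptic with the asserted Poincar\'e duality cohomology. This finiteness-plus-regularity statement is the technical heart of Halperin's theorem and is exactly what fails for hyperbolic spaces; verifying it rigorously requires the odd spectral sequence of the pure filtration together with a dimension count, rather than the purely commutative-algebra argument that suffices in the equal-rank case (\ref{6}).
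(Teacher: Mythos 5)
For the record, the paper itself does not prove this theorem: it is quoted verbatim from Halperin \cite{Hal}, and the only argument the paper supplies is the remark following the statement, which derives (\ref{b/a}) from the displayed formula for $P_X(t)$ via the factorization $1-t^{2c}=(1-t^2)\bigl(1+t^2+\cdots+(t^2)^{c-1}\bigr)$ --- your evaluation at $t=\pm1$ is that same computation. Your outline of part (\ref{6}) is the standard reconstruction of Halperin's argument: when $q=r$, finite-dimensionality of $\Q[x_1,\dots,x_q]/(d_\sigma y_1,\dots,d_\sigma y_q)$ does force a regular sequence, and the Koszul complete-intersection series gives the stated $P_X(t)$. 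The one step you pass over silently even there is the return from $d_\sigma$ to $d$: writing $H^*(X;\Q)\cong \Q[x_1,\dots,x_q]/(d_\sigma y_1,\dots,d_\sigma y_q)$ requires the odd spectral sequence of the word-length filtration to degenerate, which it does here because its initial page is concentrated in even total degree; that is standard and fixable.

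Parts (\ref{b}) and (\ref{poincare}) are where the proposal genuinely breaks: the organization you describe --- a regular sequence $d_\sigma y_1,\dots,d_\sigma y_r$ with the remaining $y_{r+1},\dots,y_q$ splitting off as a \emph{free} exterior factor over the complete intersection $R$, and a fundamental class $\omega\cdot y_{r+1}\cdots y_q$ --- is false, not merely hard to justify. Take the pure minimal model $\bigl(\Lambda(x_1,x_2,y_1,y_2,y_3),d\bigr)$ with $|x_i|=2$, $|y_j|=3$ and
$$dy_1=x_1^2,\qquad dy_2=x_2^2,\qquad dy_3=x_1x_2.$$
It is elliptic, since $\Q[x_1,x_2]/(x_1^2,x_2^2,x_1x_2)$ is finite-dimensional, with $q=3$, $r=2$, $b_1=b_2=b_3=2$, $a_1=a_2=1$. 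A direct computation gives $P_X(t)=1+2t^2+2t^5+t^7$, so $n_X=7$ and Poincar\'e duality hold exactly as the theorem predicts, but $\dim H_*(X;\Q)=6$, whereas your decomposition with $R=\Q[x_1,x_2]/(x_1^2,x_2^2)$ would force $P_X(t)=(1+t^2)^2(1+t^3)$ and $\dim H_*(X;\Q)=2^{q-r}\prod_{j\le r}b_j/\prod_i a_i=8$. Moreover no nonzero class is represented by $\omega\, y_3$ with $\omega\in\Q[x_1,x_2]$ (the cocycle condition forces $\omega\, x_1x_2=0$); the fundamental class is the syzygy class $[x_2(x_2y_1-x_1y_3)]$. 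Comparing with the model having instead $dy_3=0$, which has the \emph{same} exponents but $P_X(t)=(1+t^2)^2(1+t^3)$, shows that for $q>r$ the Betti numbers are not a function of the exponents at all, consistent with the paper's remark before Theorem \ref{P<Q} that only the coefficientwise bound $P_X(t)\le Q_X(t)$ is available. Consequently no tensor-decomposition of $H^*$ can prove (\ref{b}) and (\ref{poincare}); Halperin obtains the formal-dimension formula and Poincar\'e duality by an induction on the number of even generators together with an evaluation/Gorenstein-type argument for the minimal model, which is precisely the content you deferred. So what you flagged as ``the hard part'' is not a pending verification: the specific structure your argument relies on does not exist.
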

\begin{rem}
Here we just remark that (\ref{b/a}) is due to the following modification:
\begin{align*}
P_X(t)= \frac{\prod_{i=1}^q (1 - t^{2b_i})}{\prod_{i=1}^q (1 - t^{2a_i})} &= \frac{ \prod_{i=1}^q (1 - (t^2)^{b_i})}{\prod_{i=1}^q (1 - (t^2)^{a_i})} \\
& = \frac{(1-t^2)^q \prod_{i=1}^q (1 + t^2 + \cdots + (t^2)^{b_i-1})}{(1-t^2)^q \prod_{i=1}^q (1 + t^2 + \cdots + (t^2)^{a_i-1})}\\
&=\frac{\prod_{i=1}^q (1 + t^2 + \cdots + (t^2)^{b_i-1})}{\prod_{i=1}^q (1 + t^2 + \cdots + (t^2)^{a_i-1})}.\end{align*}
\end{rem}
%%%%%%%%%%%%%
%%%%%%%%
\subsection {J. B. Friedlander--S. Halperin's Theorems}
In this section we recall some fundamental results of Friedlander--Halperin \cite{FH}.
\begin{defn}\cite[Introduction, \emph{Definition}, p.117-118]{FH} Let $B=(b_1, \cdots, b_q)$ and $A=(a_1, \cdots, a_r)$ be sequences of positive integers. 
\begin{enumerate}
\item We say $(B;A)$ satisfies \emph{strong arithmetic condition} (abbr. S.A.C.) if for every subsequence $A^*$ of $A$ of \underline{length $s$} ($1 \leq s \leq r$) there exists \underline{at least $s$ elements} $b_j$'s of $B$ such that
$$b_j = \sum_{a_i \in A^*} \gamma_{ij}a_i$$
where $\gamma_{ij}$ is \emph{a non-negative integer} such that $\sum_{a_i \in A^*} \gamma_{ij} \geq 2.$
\item If $\sum_{a_i \in A^*} \gamma_{ij} \geq 2$ is not required, then we say that $(B;A)$ satisfies \emph{arithmetic condition} (abbr. A.C.).
\end{enumerate}
Thus, in both cases, it is \emph{necessary} that $r \leq q$ (by considering $s=r$).
\end{defn}
%%%%%%%%%
{} 
\begin{ex}
\begin{enumerate}
\item  $B=(3,4,6)$, $A=(2,3,4)$. $(B;A)$ satisfies A.C., but not S.A.C..
\item $B=(4,6,8)$, $A=(2,3,4)$. $(B;A)$ satisfies S.A.C. (hence A.C.).
\item $B=(3,4,5,5,8)$, $A=(2,3,4)$. $(B;A)$ satisfies A.C., but not S.A.C..
\item $B=(3,4,5,6,8)$, $A=(2,3,4)$. $(B;A)$ satisfies S.A.C. (hence A.C.).
\item $B=(3,5,7)$, $A=(2,4)$. $(B;A)$ does not satisfy A.C. (hence not S.A.C.).
\end{enumerate}
\end{ex}
%%%%%%%%
\begin{thm}\cite[Theorem 1, p.118]{FH} \label{sac}
Let $B=(b_1, \cdots, b_q)$ and $A=(a_1, \cdots, a_r)$ be finite sequences of positive integers. The following are equivalent:
\begin{enumerate}
\item $(B;A)$ satisfies S.A.C.
\item $B$ and $A$ are $b$-exponents and $a$-exponents of a rationally elliptic space $X$.
\end{enumerate}
\end{thm}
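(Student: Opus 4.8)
The plan is to pass to minimal Sullivan models, where the two degree sequences become the degrees of the even and odd generators, and then to reduce the whole equivalence to a purely algebraic question about when a \emph{pure} elliptic algebra with prescribed generator degrees exists. First I would record that for a simply connected $X$ the minimal model is $(\Lambda V, d)$ with $V=\op{Hom}(\pi_*(X)\otimes\Q,\Q)$, so that a basis $x_1,\dots,x_r$ of $V^{\op{even}}$ has degrees $2a_i$, a basis $y_1,\dots,y_q$ of $V^{\op{odd}}$ has degrees $2b_j-1$, and ellipticity means $\dim V<\infty$ together with $\dim H^*(\Lambda V,d)<\infty$. I would then invoke Halperin's reduction to the associated pure algebra: an elliptic minimal algebra may be replaced, without altering $V$ and hence without altering the exponents, by a pure algebra in which $d$ vanishes on $V^{\op{even}}$ and sends each $y_j$ to a polynomial $dy_j=P_j\in\Lambda V^{\op{even}}=\Q[x_1,\dots,x_r]$. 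By minimality each $P_j$ is decomposable, and by degree reasons every monomial $\prod_i x_i^{\gamma_{ij}}$ of $P_j$ satisfies $\sum_i\gamma_{ij}a_i=b_j$ and $\sum_i\gamma_{ij}\geq 2$. The algebraic engine I would use is the standard ellipticity criterion for pure algebras: $(\Lambda V,d)$ is elliptic if and only if $\Q[x_1,\dots,x_r]/(P_1,\dots,P_q)$ is finite-dimensional.

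For the implication $(2)\Rightarrow(1)$, assume $X$ is rationally elliptic with exponents $B,A$. Fixing a subsequence $A^*$ of length $s$, consider the surjection $\rho:\Q[x_1,\dots,x_r]\twoheadrightarrow \Q[x_i:a_i\in A^*]$ that kills the variables outside $A^*$. It carries the ideal $(P_1,\dots,P_q)$ onto $(\rho P_1,\dots,\rho P_q)$, hence induces a surjection onto $\Q[x_i:a_i\in A^*]/(\rho P_1,\dots,\rho P_q)$, which is therefore finite-dimensional as well. A polynomial ring in $s$ variables has Krull dimension $s$, so by Krull's height theorem an ideal whose quotient is Artinian must contain at least $s$ nonzero generators; thus at least $s$ of the $\rho P_j$ are nonzero. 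Now $\rho P_j\neq 0$ means exactly that $P_j$ has a monomial supported in $A^*$, i.e. $b_j=\sum_{a_i\in A^*}\gamma_{ij}a_i$ with $\sum\gamma_{ij}\geq 2$. Hence at least $s$ of the $b_j$ admit such an expression, which is precisely S.A.C.

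For the converse $(1)\Rightarrow(2)$, given $(B;A)$ satisfying S.A.C. I would build a pure algebra by choosing each $P_j=dy_j$ to be a suitably generic homogeneous polynomial supported on the monomials $\prod_i x_i^{\gamma_{ij}}$ with $\sum_i\gamma_{ij}a_i=b_j$ and $\sum_i\gamma_{ij}\geq 2$. The goal is to make the common zero locus of $P_1,\dots,P_q$ in $\mathbb{A}^r$ equal to $\{0\}$, equivalently $\Q[x_1,\dots,x_r]/(P_1,\dots,P_q)$ finite-dimensional; the resulting pure algebra is then elliptic and, via Sullivan's spatial realization over $\Q$, is the model of a rationally elliptic space with the prescribed exponents. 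The role of S.A.C. is that on each coordinate subspace $\mathbb{A}^{A^*}$ of dimension $s$ there are at least $s$ of the $P_j$ restricting nontrivially, so that for generic coefficients these $s$ restrictions already cut $\mathbb{A}^{A^*}$ down to the origin. Stratifying $\mathbb{A}^r$ by its coordinate subspaces and inducting on $r$, one shows the whole zero locus collapses to $\{0\}$; the needed genericity is available because $\Q$ is infinite and the defining conditions are Zariski-open.

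I expect the \textbf{main obstacle} to be the implication $(1)\Rightarrow(2)$, and more precisely the passage from the arithmetic bookkeeping of S.A.C. to an actual finite-dimensionality (regular-sequence) statement. Having enough available monomials does not by itself produce a system of parameters: one must choose the coefficients of the $P_j$ so that no component of the zero locus escapes to infinity along any coordinate subspace, and this must be arranged simultaneously for all $\binom{r}{s}$ subspaces and every $s$. Organizing this as a single inductive genericity argument --- verifying that S.A.C. for $(B;A)$ yields, after deleting a variable, an instance of S.A.C. for the restricted data --- is the delicate combinatorial core, and it is exactly here that the two quantitative features of the condition, namely the bound $\sum_i\gamma_{ij}\geq 2$ and the counting ``at least $s$ elements'', are genuinely used.
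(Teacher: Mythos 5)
The paper itself gives no proof of this theorem: it is quoted from Friedlander--Halperin \cite{FH}, and the paper's only gloss is that it is equivalent to the ``algebro-geometric'' Theorem \ref{ag-theo} (the polynomial condition P.C. holds if and only if some choice of polynomials $f_1,\cdots,f_q$ supported on the admissible monomial families yields $\dim_{\frak K}\frak K[u_1,\cdots,u_r]/(f_1,\cdots,f_q)<\infty$), whose proof is explicitly said to occupy the major part of \cite{FH}. Your reduction is exactly that one: pure minimal Sullivan models with $dy_j=P_j$ decomposable of weighted degree $b_j$, and ellipticity equivalent to finite-dimensionality of $\Q[x_1,\cdots,x_r]/(P_1,\cdots,P_q)$. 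Your direction $(2)\Rightarrow(1)$ is complete and correct: the surjection killing the variables outside $A^*$, Krull's height theorem forcing at least $s$ of the restrictions $\rho P_j$ to be nonzero, and decomposability supplying $\sum_i\gamma_{ij}\geq 2$, is precisely the easy half of Theorem \ref{ag-theo}.

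The gap is in $(1)\Rightarrow(2)$, and your pivotal sentence --- that for generic coefficients the $s$ nontrivial restrictions ``already cut $\mathbb{A}^{A^*}$ down to the origin'' --- is false as stated: a sparse admissible support can force a fixed monomial factor. For instance with weights $(a_1,a_2)=(1,2)$ and $b_j=3$, every admissible monomial ($x_1^3$ and $x_1x_2$) is divisible by $x_1$, so any two generic forms of this shape vanish identically on the line $x_1=0$; that line can only be removed by forms surviving on the smaller stratum, i.e. by S.A.C. applied to the subsequence $\{a_2\}$, not by genericity on $\mathbb{A}^{A^*}$ itself. The statement one can actually prove controls only the open torus stratum of $\mathbb{A}^{A^*}$ (all $A^*$-coordinates nonzero), where a span of monomials is base-point-free, combined with quasi-homogeneity: the common zero set is invariant under the weighted $\mathbb{G}_m$-action, so a nonempty invariant intersection in a torus stratum has dimension at least $1$, while $s$ generic cuts drop the dimension to at most $0$; the induction over strata then closes because S.A.C. passes to subsequences of $A^*$ (this also answers the worry in your last paragraph --- the combinatorial inheritance is automatic, no Hall-type matching is needed). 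As written, however, you assert rather than prove exactly the implication that constitutes the hard half of Friedlander--Halperin's Theorem \ref{ag-theo} --- the part the paper singles out as the bulk of \cite{FH} --- so the proposal is a faithful reconstruction of the route with the easy direction done, but the hard direction remains a plan whose one concrete intermediate claim is incorrectly formulated.
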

%%%%%%%%%%}
%%%%%%%%%%%%%%%%
The above Theorem \ref{sac} is equivalent to Theorem \ref{ag-theo} below:

Let $R=\frak K [u_1, \cdots, u_r]$ the ring of polynomials in $r$ variables $u_i$ of degree $a_i$ over an infinite field $\frak K$. Let
$$\Phi_i :=\{\sigma_{ij}\}_{j=1, \cdots, \ell_i} , i=1, \cdots, q$$
be families of non-linear monomials $\sigma_{ij}$ of degree $b_i$ in the variables $u_1, \cdots, u_r$. Then $(B,A)$ satisfies S.A.C. if and only if the families $\Phi_1, \cdots, \Phi_q$ satisfies the following P.C.:

\begin{defn}\cite[\emph{Definition}, p.119]{FH}
The families $\Phi_1, \cdots, \Phi_q$ satisfy P.C. (polynomial condition) if and only if for each $s$ and for each set of $s$ variables $u_{i_1}, \cdots, u_{i_s}$ there are at least $s$ families $\Phi_{m_1}, \cdots, \Phi_{m_s}$ containing a non-linear monomial in $\frak K [u_{i_1}, \cdots, u_{i_s}]$.
\end{defn}

\begin{thm}\cite[Theorem 3, p.119]{FH} \label{ag-theo} Assume that $\Phi_1, \cdots, \Phi_q$ are sets of monomials as above. $\Phi_1, \cdots, \Phi_q$ satisfy P.C. if and only if there are polynomials $f_1, \cdots, f_q$ of the form
$$f_i = \sum_{j=1}^{\ell_i} c_{ij}\sigma_{ij}, \quad c_{ij} \in \frak K, \sigma_{ij} \in  \Phi_i \, \, (1 \leq j \leq \ell_i)$$
such that $\op{dim}_{\frak K} \left (\displaystyle \frac{\frak K [u_1, \cdots, u_r]}{(f_1, \cdots, f_q)} \right ) < \infty.$
\end{thm}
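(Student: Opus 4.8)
The plan is to convert the finiteness condition into geometry and then run a dimension count that is governed, stratum by stratum, exactly by the polynomial condition P.C.

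First I would record the reduction. Each $f_i$ is weighted-homogeneous of degree $b_i$ for the grading $\deg u_k = a_k$, so the ideal $(f_1,\dots,f_q)$ is homogeneous and lies in the irrelevant ideal $\mathfrak{m}=(u_1,\dots,u_r)$; hence $\dim_{\mathfrak K}\mathfrak{K}[u_1,\dots,u_r]/(f_1,\dots,f_q)<\infty$ holds if and only if $\sqrt{(f_1,\dots,f_q)}=\mathfrak{m}$, i.e.\ if and only if the $f_i$ have no common zero other than the origin over $\overline{\mathfrak K}$, equivalently no common zero in the weighted projective space $\mathbb{P}:=\mathbb{P}(a_1,\dots,a_r)$. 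Finiteness of the quotient is insensitive to field extension, so I may verify this geometric statement over $\overline{\mathfrak K}$. For $T\subseteq\{1,\dots,r\}$ let $L_T=\{p:\,p_j=0\ (j\notin T)\}$ and let $N(T)$ be the set of indices $i$ for which $\Phi_i$ contains a monomial lying in $\mathfrak{K}[u_j:j\in T]$. Since a monomial $\sigma_{ij}$ satisfies $\sigma_{ij}(p)\ne0$ for $p$ of support exactly $T$ precisely when $\sigma_{ij}\in\mathfrak{K}[u_j:j\in T]$, the condition P.C. says exactly that $|N(T)|\ge|T|$ for every $T$; this is a Hall-type inequality and is the combinatorial engine of the whole argument.

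For the forward implication (existence of such $f_i$ forces P.C.) I would argue by contraposition. Suppose P.C. fails, so some $T$ with $s:=|T|$ has $|N(T)|<s$. Restricting any admissible $f_1,\dots,f_q$ to $L_T\cong\mathbb{A}^s$ annihilates every $f_i$ with $i\notin N(T)$, because each of their monomials uses a variable outside $T$; what remains is at most $|N(T)|<s$ weighted-homogeneous equations on $\mathbb{A}^s$. Since each equation drops the dimension of the vanishing locus by at most one, their common zero locus in $L_T$ is a cone of dimension $\ge s-|N(T)|\ge1$, and therefore contains a nonzero point, a nonzero common zero of all the $f_i$. Hence the quotient is infinite-dimensional for every choice of coefficients, and P.C.\ is necessary.

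For the converse (P.C.\ yields such $f_i$) I would run the genericity argument. Put $M=\sum_i\ell_i$ and let the coefficient vector $c=(c_{ij})$ range over $\mathbb{A}^M$, writing $f_i^{(c)}=\sum_j c_{ij}\sigma_{ij}$. Form the incidence variety $Z=\{(c,[p])\in\mathbb{A}^M\times\mathbb{P}:\,f_i^{(c)}(p)=0\ \text{for all } i\}$, which is closed. Stratifying $\mathbb{P}$ by support, the stratum $\mathbb{P}_T$ of points of support exactly $T$ has dimension $|T|-1$; for every $[p]\in\mathbb{P}_T$ each monomial in $\mathfrak{K}[u_j:j\in T]$ is nonzero at $p$, so the condition $f_i^{(c)}(p)=0$ is a nontrivial linear equation in the coefficients of $f_i$ precisely for $i\in N(T)$, and since these equations involve disjoint coefficient blocks the fiber of $Z\to\mathbb{P}$ over $[p]$ is affine-linear of dimension $M-|N(T)|$. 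Therefore $\dim(Z|_{\mathbb{P}_T})\le(|T|-1)+(M-|N(T)|)\le M-1$ by P.C., and summing over the finitely many $T$ gives $\dim Z\le M-1$. As $\mathbb{P}$ is proper, the projection $Z\to\mathbb{A}^M$ is closed, so its image—the locus of ``bad'' coefficients admitting a nonzero common zero—is a closed subset of dimension $\le M-1$, hence a proper closed subvariety. Its complement is a nonempty Zariski-open subset of $\mathbb{A}^M$ defined over $\mathfrak{K}$, and since $\mathfrak{K}$ is infinite this complement carries a $\mathfrak{K}$-rational point $c$; for that $c$ the $f_i^{(c)}$ have only the trivial common zero, so $\dim_{\mathfrak K}\mathfrak{K}[u_1,\dots,u_r]/(f_1,\dots,f_q)<\infty$.

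The main obstacle is the converse, and within it the bookkeeping of the dimension count: one must verify that over each support stratum it is \emph{exactly} the families indexed by $N(T)$ that impose conditions and that these conditions are independent (disjoint coefficient blocks), so that the fiber dimension is precisely $M-|N(T)|$. This is the one spot where the Hall-type inequality $|N(T)|\ge|T|$ from P.C.\ is consumed, beating the stratum dimension $|T|-1$. The two remaining technical points are the passage to the weighted projective space $\mathbb{P}$—needed both for the properness that makes the bad locus closed and for the extra $-1$ in $|T|-1$ that forces the strict bound $\dim Z<M$—and the field-independence of finite-dimensionality, which legitimizes checking the geometric condition over $\overline{\mathfrak K}$ while extracting the coefficients $c$ from the infinite field $\mathfrak{K}$.
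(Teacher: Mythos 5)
Your proof is correct, but be aware that the paper contains no proof of Theorem \ref{ag-theo} to compare against: the theorem is quoted from Friedlander--Halperin, and the paper only remarks that ``the major part'' of that paper is devoted to proving it. So the comparison is with the original source rather than with anything in this text. Your necessity direction (restrict to the coordinate subspace $L_T$, observe that every $f_i$ with $i\notin N(T)$ dies there, and apply Krull's principal ideal theorem to the at most $|N(T)|<|T|$ surviving weighted-homogeneous equations) is the standard argument and agrees in substance with the source. Your sufficiency direction is a genuinely different and far more compact route than the long commutative-algebra development in the cited paper: you encode P.C.\ as the Hall-type inequality $|N(T)|\geq |T|$, build the incidence variety $Z\subseteq \mathbb{A}^M\times \mathbb{P}$ over the weighted projective space $\mathbb{P}=\operatorname{Proj}\,\frak K[u_1,\cdots,u_r]$, and the key bookkeeping is right: over the support stratum $\mathbb{P}_T$ (of dimension $|T|-1$) exactly the blocks $c_{i\bullet}$ with $i\in N(T)$ receive one nontrivial linear condition each, and disjointness of the blocks makes the fiber affine-linear of dimension exactly $M-|N(T)|$, whence $\dim Z\leq M-1$; properness of $\mathbb{P}$ then makes the bad locus closed and proper, and infinitude of $\frak K$ supplies a rational point of the complement. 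The preliminary reductions (finite-dimensionality of $\frak K[u_1,\cdots,u_r]/(f_1,\cdots,f_q)$ is stable under base change to $\overline{\frak K}$ and, the ideal being weighted-homogeneous, equivalent to the common zero set being the origin) are also sound; the only streamlining I would suggest is that you need not discuss fields of definition of the image at all, since $\frak K^M$ is Zariski-dense in $\mathbb{A}^M_{\overline{\frak K}}$ for infinite $\frak K$, so any nonempty open set meets it. What your approach buys is brevity and a single visible point where P.C.\ is consumed, namely the estimate $(|T|-1)+(M-|N(T)|)\leq M-1$; what it gives up is any constructive control over the coefficients $c_{ij}$ (you get only a generic choice), which is immaterial here since the theorem asserts bare existence.
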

The major part of Friedlander--Halperin's paper \cite{FH} is devoted to the proof of this ``algebro-geometric" Theorem \ref{ag-theo}.
%%%%%%%%%%
%%%%%%%%%%%%%%%%
%%%%%%%
\, 
{} 
\begin{cor} \label{ba} \cite[2.5 Lemma, p.121]{FH}
\label{b>2a} If $B=(b_1, b_2, \cdots, b_q)$; $b_1 \geq b_2 \geq \cdots \geq b_q$,  and $A=(a_1, a_2, \cdots, a_r)$; $a_1 \geq a_2 \geq \cdots \geq a_r$. If $(B;A)$ satisfies S.A.C, then $b_i \geq 2a_i \quad (1 \leq i \leq r).$
\end{cor}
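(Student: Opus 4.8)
The plan is to read off $b_i \ge 2a_i$ directly from the S.A.C., by feeding it a cleverly chosen subsequence of $A$ and then exploiting that both $B$ and $A$ are arranged in decreasing order. No auxiliary machinery (such as Theorems \ref{ag-theo} or \ref{sac}) is needed; everything follows from the definition of S.A.C. together with two monotonicity observations.

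First I would fix an index $i$ with $1 \le i \le r$ and apply the S.A.C. to the subsequence $A^* = (a_1, \ldots, a_i)$ consisting of the $i$ \emph{largest} entries of $A$, which has length $s = i$. The definition then yields at least $i$ (distinct) entries $b_j$ of $B$, each of which can be written as
$$b_j = \sum_{k=1}^{i} \gamma_{kj}\, a_k, \qquad \gamma_{kj} \in \Z_{\ge 0}, \quad \sum_{k=1}^{i} \gamma_{kj} \ge 2.$$
Because $A$ is decreasing, every $a_k$ appearing here satisfies $a_k \ge a_i$, so each of these entries obeys
$$b_j = \sum_{k=1}^{i} \gamma_{kj}\, a_k \;\ge\; \left( \sum_{k=1}^{i} \gamma_{kj} \right) a_i \;\ge\; 2 a_i.$$
Thus there exist at least $i$ entries of $B$ that are each at least $2a_i$. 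The choice of $A^*$ as the top $i$ entries is exactly what forces the floor to be $2a_i$ rather than $2a_k$ for some smaller $a_k$.

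Finally I would invoke that $B$ is decreasing: the set of indices $j$ with $b_j \ge 2a_i$ is an initial segment $\{1, \ldots, N\}$, since $b_{j'} \ge b_j \ge 2a_i$ whenever $j' < j$. The previous step shows $N \ge i$, whence $i \in \{1, \ldots, N\}$ and therefore $b_i \ge 2a_i$. As $i$ was arbitrary, this completes the argument.

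The reasoning is genuinely short, so I do not expect a real obstacle; the only things to be careful about are the bookkeeping conventions of the definition — that ``at least $s$ elements $b_j$'' means $s$ distinct indices of $B$, and that the coefficient constraint $\sum \gamma_{kj} \ge 2$ is what supplies the factor $2$. The one point requiring a moment's thought is recognizing \emph{which} subsequence to plug in: taking $A^* = (a_1,\ldots,a_i)$ is what couples the count of $i$ large $b_j$'s to the specific bound $2a_i$, after which the decreasing order of $B$ converts ``at least $i$ entries $\ge 2a_i$'' into the pointwise inequality $b_i \ge 2a_i$.
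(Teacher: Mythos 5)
Your proof is correct: applying the S.A.C.\ to the initial subsequence $A^* = (a_1,\dots,a_i)$, bounding each resulting $b_j = \sum_{k\le i}\gamma_{kj}a_k \ge \bigl(\sum_{k\le i}\gamma_{kj}\bigr)a_i \ge 2a_i$, and then using the decreasing order of $B$ to convert ``at least $i$ entries $\ge 2a_i$'' into $b_i \ge 2a_i$ is exactly the standard argument. The paper itself states this corollary without proof, simply citing \cite[2.5 Lemma]{FH}, and your argument coincides with the one given there, so there is nothing to add.
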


{} 
\begin{cor}\cite[1.3 Corollary, p.118, 2.6 Proposition, p.121]{FH} 
\begin{enumerate}
\item $n_X \geq q+r = \dim (\pi_*(X) \otimes \Q)$.
\item $n_X \geq  \sum_{j=1}^q b_j.$
 \item $2n_X-1 \geq  \sum_{j=1}^q(2b_j-1).$  (Equality holds for $X=S^{2n}$; $2(2n)-1=4n-1.$)
\item $n_X \geq   \sum_{i=1}^r 2a_i.$ (Equality holds for $X=S^{2n}$; $n_X=2n=2a$)
\end{enumerate}
\end{cor}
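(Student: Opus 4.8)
The plan is to deduce all four inequalities from a single structural input, namely the formula
$$n_X=\sum_{j=1}^q(2b_j-1)-\sum_{i=1}^r(2a_i-1)$$
of the first part of Theorem~\ref{str}, combined with the arithmetic constraint supplied by Corollary~\ref{ba}. First I would order both sequences decreasingly, $b_1\ge\cdots\ge b_q$ and $a_1\ge\cdots\ge a_r$. Since $B$ and $A$ are the $b$- and $a$-exponents of a rationally elliptic space, the pair $(B;A)$ satisfies S.A.C. by Theorem~\ref{sac}, and hence Corollary~\ref{ba} gives the term-by-term bound $b_i\ge 2a_i$ for $1\le i\le r$. I would also record the elementary facts that all $a_i,b_j$ are positive integers, that $r\le q$ (forced by S.A.C.), and that $q\ge 1$ for any rationally non-contractible $X$ (indeed if $q=0$ then $\chi^\pi(X)=r$, which by Theorem~\ref{hal-1} must be $\le 0$, forcing $r=0$ as well).

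The key move is to regroup the formula so that the $r$ largest odd exponents are matched with the even exponents:
$$n_X=\sum_{i=1}^r 2(b_i-a_i)+\sum_{j=r+1}^q(2b_j-1).$$
From $b_i\ge 2a_i$ one gets $b_i-a_i\ge a_i\ge 1$ in each paired term, while each leftover term satisfies $2b_j-1\ge 1$. Bounding each paired term below by $2a_i$ and discarding the nonnegative leftover sum yields part~(4), $n_X\ge\sum_{i=1}^r 2a_i$. Bounding instead each paired term below by $2$ and each leftover term below by $1$ gives $n_X\ge 2r+(q-r)=q+r$, which is part~(1) once one notes $q+r=\dim(\pi_*(X)\otimes\Q)$.

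For part~(2) I would compute directly from the regrouped expression that
$$n_X-\sum_{j=1}^q b_j=\sum_{i=1}^r(b_i-2a_i)+\sum_{j=r+1}^q(b_j-1),$$
a sum of nonnegative terms by $b_i\ge 2a_i$ and $b_j\ge 1$; hence $n_X\ge\sum_{j=1}^q b_j$. Part~(3) then follows formally: doubling part~(2) gives $2n_X\ge 2\sum_j b_j$, so $2n_X-1\ge 2\sum_j b_j-1\ge 2\sum_j b_j-q=\sum_{j=1}^q(2b_j-1)$, where the final step uses $q\ge 1$.

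The only place where a naive estimate fails is part~(4) (and the analogous regrouping behind part~(2)): if one simply expands $n_X=2\sum_j b_j-q-2\sum_i a_i+r$ and tries to bound the three totals separately, a stray summand $r-q\le 0$ appears and points the wrong way. The remedy, and the genuine content imported from Corollary~\ref{ba}, is that after sorting the exponents can be paired index-by-index so that $b_i\ge 2a_i$ holds within each matched pair, after which the surplus odd exponents $b_{r+1},\dots,b_q$ only contribute nonnegatively. Thus the entire difficulty is concentrated in Corollary~\ref{ba}; granting that, the four inequalities reduce to the elementary bookkeeping above.
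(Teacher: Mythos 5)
Your proposal is correct and follows essentially the same route as the paper: both proofs rest on Halperin's formula $n_X=\sum_{j=1}^q(2b_j-1)-\sum_{i=1}^r(2a_i-1)$ together with the sorted term-by-term bound $b_i\geq 2a_i$ of Corollary~\ref{ba}, and your regrouped identity is just a repackaging of the paper's stepwise manipulations (the paper likewise gets (3) by doubling (2), and gets (4) via $n_X\geq\sum_{j=1}^q b_j\geq\sum_{j=1}^r b_j\geq\sum_{i=1}^r 2a_i$). If anything, you are slightly more careful than the paper in making explicit the hypothesis $q\geq 1$ needed in part (3), which the paper leaves implicit.
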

\begin{proof}
All these inequalities will be used later. So, for the sake of convenience of the reader, the proof is given below.
%%%%%%%%%%
%%%%%%%%%%%%%%%%
%%%%%%%
\begin{align*}
\quad (1) \quad  n_X =\sum_{j=1}^q (2b_j-1) -\sum_{i=1}^r (2a_i-1) {}
& = \sum_{j=1}^q (b_j-1) + \sum_{j=1}^q b_j -\sum_{i=1}^r 2a_i+r \\
&\geq \sum_{j=1}^q (b_j-1) + \sum_{j=1}^r b_j -\sum_{i=1}^r 2a_i+r \quad \text{(since $q \geq r$)}\\
& \geq q + \sum_{i=1}^r (b_i -2a_i) +r \quad \text{(since $b_j \geq 2$)}\\
& \geq q+r \quad \text{(since $b_i \geq 2a_i$)}
\end{align*}
\begin{align*}
\quad (2) \quad  n_X =\sum_{j=1}^q (2b_j-1) -\sum_{i=1}^r (2a_i-1) 
& = \sum_{j=1}^q b_j +\sum_{j=1}^q (b_j-1) -\sum_{i=1}^r (2a_i-1) \\
&\geq \sum_{j=1}^q b_j +\sum_{j=1}^r (b_j-1)-\sum_{i=1}^r (2a_i-1)  \quad \text{(since $q \geq r$)}\\
& = \sum_{j=1}^q b_j + \sum_{i=1}^r (b_i -2a_i) \geq \sum_{j=1}^q b_j \quad \text{(since $b_i \geq 2a_i$)}
\end{align*}

(3)
$2n_X \geq  \sum_{j=1}^q 2b_j  \Longrightarrow 2n_X -q \geq \sum_{j=1}^q (2b_j-1) 
\Longrightarrow 2n_X-1 \geq \sum_{j=1}^q(2b_j-1)$.

(4)
$n_X \geq  \sum_{j=1}^q b_j \Longrightarrow n_X \geq \sum_{j=1}^r b_j \Longrightarrow n_X \geq \sum_{i=1}^r 2a_i$.
\end{proof}
%%%%%%%
\begin{rem} In fact, as one can see, the inequalities in (1), (3) and (4) can be made sharper ones as follows:
\begin{enumerate}
\item $n_X \geq 3q -r \geq 2q \geq \dim (\pi_*(X) \otimes \Q)$. It suffices to show $n_X \geq 3q -r$, whose proof is a slight revision of the above proof. Indeed,
\begin{align*}
 n_X =\sum_{j=1}^q (2b_j-1) -\sum_{i=1}^r (2a_i-1)
& = \sum_{j=1}^q (b_j-1) + \sum_{j=1}^q b_j -\sum_{i=1}^r 2a_i+r \\
&\geq \sum_{j=1}^q (b_j-1) + \sum_{j=r+1}^q b_j + \sum_{j=1}^r b_j -\sum_{i=1}^r 2a_i+r \\
& \geq q + 2(q-r) + \sum_{i=1}^r (b_i -2a_i) +r \quad \text{(since $b_j \geq 2$)}\\
& \geq q+ 2(q-r)+ r =3q-r. \quad \text{(since $b_i \geq 2a_i$)}
\end{align*}
\item[(3)] $2n_X -q \geq \sum_{j=1}^q (2b_j-1)$ is sharper than $2n_X-1 \geq \sum_{j=1}^q(2b_j-1)$.
\item[(4)] $\sum_{j=1}^q b_j = \sum_{j=1}^r b_j + \sum_{j=r+1}^q b_j \geq \sum_{i=1}^r 2a_i + 2(q-r) = \sum_{i=1}^r 2a_i -2\chi^{\pi}(X).$
Hence  $n_X \geq \sum_{i=1}^r 2a_i -2\chi^{\pi}(X)$ is sharper than $n_X \geq \sum_{i=1}^r 2a_i$. Note that $\chi^{\pi}(X) \leq 0$. 
\end{enumerate}
\end{rem}
%%%%%%%
\begin{pro}(cf. \cite[Proposition 2.2]{CatMil}) 
If $X$ is rationally elliptic and $\chi(X)>0$ (such a space is called positively elliptic), then the Hilali conjecture holds.
\end{pro}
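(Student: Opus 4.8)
The plan is to reduce the Hilali inequality, in this positively elliptic setting, to an elementary numerical inequality between $2q$ and $2^q$, where $q$ is the number of odd-degree rational homotopy generators. First I would use Theorem \ref{hal-1}: the hypothesis $\chi(X) > 0$ is equivalent to $\chi^{\pi}(X) = 0$, i.e.\ to $q = r$, where $q = \dim(\pi_{\op{odd}}(X) \otimes \Q)$ and $r = \dim(\pi_{\op{even}}(X) \otimes \Q)$. Hence the homotopy dimension is exactly
$$\dim (\pi_*(X) \otimes \Q) = q + r = 2q.$$

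Because $q = r$, the closed form \eqref{b/a} of Theorem \ref{str} applies and gives
$$\dim H_*(X;\Q) = \chi(X) = \frac{\prod_{i=1}^q b_i}{\prod_{i=1}^q a_i},$$
where $(b_1, \dots, b_q)$ and $(a_1, \dots, a_q)$ denote the $b$- and $a$-exponents of $X$, which I may assume arranged in decreasing order. The decisive step is to bound this ratio below by $2^q$. For this I would invoke Corollary \ref{ba}: since the exponents of a rationally elliptic space satisfy S.A.C., the decreasingly ordered exponents obey $b_i \geq 2a_i$ for every $1 \leq i \leq r = q$. Pairing the sorted factors term by term,
$$\frac{\prod_{i=1}^q b_i}{\prod_{i=1}^q a_i} = \prod_{i=1}^q \frac{b_i}{a_i} \geq \prod_{i=1}^q 2 = 2^q .$$

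It then remains only to compare $2q$ with $2^q$. The elementary inequality $2q \leq 2^q$ holds for every integer $q \geq 0$, with equality precisely at $q = 1$ and $q = 2$, and is proved by a one-line induction after checking the cases $q = 0, 1, 2$. Stringing the pieces together gives
$$\dim (\pi_*(X) \otimes \Q) = 2q \leq 2^q \leq \frac{\prod_{i=1}^q b_i}{\prod_{i=1}^q a_i} = \dim H_*(X;\Q),$$
which is exactly the Hilali inequality.

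The only substantive step is the middle chain $\chi(X) \geq 2^q$, and it rests entirely on the sorted comparison $b_i \geq 2a_i$ from Corollary \ref{ba}; thus the main point to be careful about is that this comparison is stated for the exponents taken in decreasing order, whereas the product formula \eqref{b/a} is insensitive to that reordering, so the term-by-term pairing of the two sorted sequences is legitimate. The degenerate case $q = r = 0$ (when $X$ is rationally trivial) is covered by the same chain, since there the homotopy dimension is $0$ and $\chi(X)$ equals the empty product $1$. Everything else is routine bookkeeping.
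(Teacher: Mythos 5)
Your proof is correct and is essentially identical to the paper's own argument: both reduce to $q=r$ via Halperin's theorem, apply the closed form $\chi(X)=\prod b_i/\prod a_i$, bound it below by $2^q$ using the sorted comparison $b_i \geq 2a_i$ from Friedlander--Halperin, and finish with $2q \leq 2^q$. Your explicit remarks on the sorting convention and the degenerate case $q=0$ are careful additions but do not change the route.
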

\begin{proof}
Here we give a much easier proof different from that of \cite[Proposition 2.2]{CatMil}, \emph{as a corollary} of Friedlander--Halperin's results.
Since $\chi(X)>0$, {} $\chi^{\pi}(X)=\dim (\pi_{\op{even}}(X)\otimes \Q) - \dim (\pi_{\op{odd}}(X)\otimes \Q) = r-q=0$, i.e., $r=q$. Hence
$$\dim (H_*(X)\otimes \mathbb Q) =\frac{\prod_{i=1}^q b_i}{\prod_{i=1}^q a_i} {} \geq \frac{\prod_{i=1}^q 2a_i}{\prod_{i=1}^q a_i} {} = \prod_{i=1}^q 2 =2^q.$$
$\dim (\pi_*(X) \otimes \Q) =\dim (\pi_{\op{even}}(X)\otimes \Q) + \dim (\pi_{\op{odd}}(X)\otimes \Q)  =2q  \leq 2^q \leq \dim (H_*(X)\otimes \mathbb Q).$
\end{proof}
In the case of $q >r$, an explicit description of the Poincar\'e polynomial $P_X(t)$ is still open (as far as the author knows). However we do have the following bound for $P_X(t)$:
\begin{thm}\label{P<Q} \cite[2.8 Proposition, p.121, \emph{Proof of Proposition 2.8}, pp.131-132]{FH}
\begin{equation}\label{Q}
Q_X(t):=\frac{\prod_{i=1}^q (1 - t^{2b_i})}{(1-t)^{q-r}\prod_{j=1}^r (1 - t^{2a_j})} = 1 + \cdots + c_mt^m + \cdots + c_{n_X}t^{n_X}
\end{equation}
is such that each $c_m$ is non-negative and $\op{dim} H_m(X;\mathbb Q) \leq c_m$ for each $m$. So, 
$$\text{$P_X(t) \leq Q_X(t)$, in particular, $\op{dim} H_*(X;\mathbb Q) \leq Q_X(1)$.}$$ 
\end{thm}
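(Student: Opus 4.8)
The plan is to pass to the pure Sullivan model and recognize the problem as one about the Poincaré series of a Koszul homology, and then to bound that series coefficientwise. Let $(\Lambda V, d)$ be the minimal Sullivan model of $X$, with $V^{\op{even}}$ spanned by $x_1,\dots,x_r$ (of degrees $2a_1,\dots,2a_r$) and $V^{\op{odd}}$ spanned by $y_1,\dots,y_q$ (of degrees $2b_1-1,\dots,2b_q-1$). The standard lower-degree (``odd'') filtration produces a spectral sequence, converging to $H^*(X;\Q)$, whose initial term is the cohomology of the associated \emph{pure} model $(\Lambda V, d_\sigma)$, where $d_\sigma x_i=0$ and $d_\sigma y_j=f_j\in\Q[x_1,\dots,x_r]=:R$ is homogeneous of degree $2b_j$ (see \cite{FHT}). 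Since the differentials of a spectral sequence only cancel classes in pairs, one gets $\op{dim}H^m(X;\Q)\le\op{dim}H^m(\Lambda V,d_\sigma)$ for every $m$, that is, $P_X(t)\le P_{\text{pure}}(t)$ coefficientwise. As $(\Lambda V,d_\sigma)$ is exactly the Koszul complex $K=K(f_1,\dots,f_q)$ over $R$, it remains to prove $P_{\text{pure}}(t)=P(H_*(K))(t)\le Q_X(t)$ coefficientwise; combined with the above this yields the theorem, and both the nonnegativity of the $c_m$ and the polynomiality of $Q_X$ will fall out of the same analysis.

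The engine is a mapping-cone induction, peeling off one relation at a time. Writing $K^{(j)}=K(f_1,\dots,f_j)$, one has $K^{(j)}=\op{Cone}\bigl(f_j:K^{(j-1)}\to K^{(j-1)}\bigr)$ with an internal degree shift $2b_j-1$ coming from $y_j$, whence a long exact sequence in which multiplication by $f_j$ is the connecting map. It exhibits $H_n(K^{(j)})$ as an extension of $\ker\bigl(f_j\mid H_{n-1}(K^{(j-1)})\bigr)$ by $\op{coker}\bigl(f_j\mid H_n(K^{(j-1)})\bigr)$, so at the level of Poincaré series $P^{(j)}(t)=P(\op{coker})(t)+t^{2b_j-1}P(\ker)(t)$. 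Since a cokernel is a quotient and a kernel a submodule of $H_*(K^{(j-1)})$, both series are $\le P^{(j-1)}(t)$ coefficientwise, giving the uniform per-relation bound
\[
P^{(j)}(t)\le(1+t^{2b_j-1})\,P^{(j-1)}(t)\le(1+t+\cdots+t^{2b_j-1})\,P^{(j-1)}(t)=\frac{1-t^{2b_j}}{1-t}\,P^{(j-1)}(t).
\]
When instead $f_j$ is a \emph{nonzerodivisor} on $H_*(K^{(j-1)})$ the kernel vanishes and one gets the sharper factor $(1-t^{2b_j})$, with no $(1-t)$ in the denominator.

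To produce the exact shape of $Q_X$ I would order the relations so that $f_1,\dots,f_r$ form a regular sequence in $R$. Then the first $r$ steps are all of the sharp type and $H_*(K^{(r)})=R/(f_1,\dots,f_r)$ is the complete intersection with $P^{(r)}(t)=\prod_{i=1}^r(1-t^{2b_i})/\prod_{j=1}^r(1-t^{2a_j})$, while each of the remaining $q-r$ steps contributes a factor $\tfrac{1-t^{2b_j}}{1-t}$; multiplying gives precisely
\[
P(H_*(K))(t)\le\frac{\prod_{i=1}^r(1-t^{2b_i})}{\prod_{j=1}^r(1-t^{2a_j})}\ \prod_{j=r+1}^{q}\frac{1-t^{2b_j}}{1-t}=\frac{\prod_{i=1}^q(1-t^{2b_i})}{(1-t)^{q-r}\prod_{j=1}^r(1-t^{2a_j})}=Q_X(t).
\]
Thus the factor $(1-t)^{q-r}$ is accounted for by exactly those $q-r$ relations that exceed a maximal regular sequence. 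This description also makes the remaining claims transparent: $Q_X$ is a product of the complete-intersection polynomial with the polynomials $1+t+\cdots+t^{2b_j-1}$, hence is itself a polynomial with nonnegative coefficients, and its degree equals $\sum_j(2b_j-1)-\sum_i(2a_i-1)=n_X$ by the formula for $n_X$ in Theorem \ref{Halp-2}.

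The hard part is justifying the ordering step, namely that the ideal $(f_1,\dots,f_q)$ admits a length-$r$ homogeneous regular sequence (a system of parameters) whose degrees form a sub-multiset of $\{2b_1,\dots,2b_q\}$. One cannot simply select $r$ of the given $f_j$: already in $\Q[x,y]$ the three quadrics $xy,\ y(x+y),\ x(x+y)$ generate an $\mathfrak m$-primary ideal although no two of them are coprime, so no two-element subset is a regular sequence. The correct mechanism is the strong arithmetic condition: read as a marriage condition, S.A.C.\ furnishes, via Hall's theorem, a transversal matching the $r$ variables to $r$ of the odd generators, and Friedlander--Halperin's algebro-geometric Theorem \ref{ag-theo} upgrades this transversal to honest homogeneous polynomials of the prescribed degrees cutting out the origin, i.e.\ a genuine regular sequence, with Corollary \ref{ba} ($b_i\ge 2a_i$) keeping the degrees in range. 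Securing this regular core with the correct degree bookkeeping, so that the mapping-cone induction runs with exactly the degrees $2b_1,\dots,2b_q$, is the main obstacle, and it is precisely where the full strength of the S.A.C.\ analysis is needed.
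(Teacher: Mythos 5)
Your soft reductions are fine, and they do match the general Friedlander--Halperin framework (note that the paper itself gives no proof of this theorem: it is quoted from \cite{FH}, with the remark that the difficult part, $\op{dim} H_m(X;\Q)\le c_m$, is proved there via Krull dimension/transcendence degree and Macaulay's theorem). The odd spectral sequence from the pure model, the identification of $(\Lambda V,d_\sigma)$ with the Koszul complex $K(f_1,\dots,f_q)$ over $R=\Q[x_1,\dots,x_r]$, and the mapping-cone estimate $P^{(j)}(t)\le(1+t^{2b_j-1})P^{(j-1)}(t)\le\frac{1-t^{2b_j}}{1-t}P^{(j-1)}(t)$, with the sharp factor $(1-t^{2b_j})$ at a nonzerodivisor step, are all correct. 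But these formal steps alone only give
\begin{equation*}
P_X(t)\ \le\ \frac{\prod_{j=1}^q(1-t^{2b_j})}{(1-t)^{q}\prod_{i=1}^r(1-t^{2a_i})},
\end{equation*}
which is not a polynomial and is strictly weaker than $Q_X(t)$. The entire content of the theorem is the existence of $r$ sharp steps, i.e.\ your ``regular core,'' and that is exactly the step you flag as the main obstacle and do not prove. So the proof is incomplete at its crux, by your own accounting.

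Moreover, the repair you sketch does not work as stated. Theorem \ref{ag-theo} is an \emph{existence} result: it produces \emph{some} polynomials $f_i=\sum_j c_{ij}\sigma_{ij}$ supported on prescribed monomial families with $\dim_{\frak K} R/(f_1,\dots,f_q)<\infty$; in \cite{FH} it is the tool for \emph{realizing} a pair $(B;A)$ satisfying S.A.C.\ by an elliptic space, not for normalizing the differential of the model of a \emph{given} $X$. If you replace the actual $f_j$ (those occurring in the pure model of $X$) by other forms of the same degrees, you change the Koszul complex whose homology is what bounds $H^*(X;\Q)$ via the spectral sequence; since $\dim H_m(K(f_1,\dots,f_q))$ is upper semicontinuous in the coefficients of the $f_j$, a bound valid for generically or specially constructed forms gives no bound for the given, possibly degenerate, sequence --- the inequality transfers in the wrong direction. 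The only admissible moves are reordering and constant-coefficient recombinations among generators $y_j$ of \emph{equal} degree (automorphisms of the model), and when the $b_j$ are pairwise distinct no recombination is available at all, so one must confront the given $f_j$ as they are; your own example of the three quadrics shows why a subsequence need not be regular. Bridging this --- extracting the effect of a length-$r$ system of parameters inside $(f_1,\dots,f_q)$ without being able to place a regular sequence among the generators --- is precisely where \cite{FH} invoke Krull dimension and Macaulay's theorem (pp.~131--132), and it is the part your outline leaves untouched; the degree bookkeeping via Corollary \ref{ba} and Hall-type transversals does not substitute for it.
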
 
The difficult part is $\op{dim} H_m(X;\mathbb Q) \leq c_m$, for which they use \emph{transcendence degree or Krull dimension and Macaulay's theorem} in commutative ring theory.
%%%%%%%%%%%%%%%%
\begin{thm}\cite[2.9 Corollary, p.121]{FH}
$$\dim H_*(X;\Q) \leq 2^{q-r} \frac{\prod_{j=1}^q b_j}{ \prod_{i=1}^r a_i} \leq (2n_X)^{n_X}$$
\end{thm}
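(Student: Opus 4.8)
The plan is to obtain both inequalities as elementary consequences of Theorem \ref{P<Q} together with the numerical bounds on $n_X$ collected in the Corollaries above; the genuinely hard input (Macaulay's theorem, transcendence degree/Krull dimension) has already been spent in proving Theorem \ref{P<Q}, so nothing of that kind is needed here.

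For the left-hand inequality I would begin from the conclusion of Theorem \ref{P<Q}, namely $\op{dim} H_*(X;\Q) \le Q_X(1)$, and merely evaluate $Q_X(1)$. The numerator $\prod_{i=1}^q (1 - t^{2b_i})$ vanishes to order $q$ at $t = 1$, while the denominator $(1-t)^{q-r}\prod_{j=1}^r (1 - t^{2a_j})$ vanishes to order $(q-r)+r = q$ there, so $t=1$ is a removable singularity and $Q_X(1)$ is a finite number. Writing $1 - t^m = (1-t)(1 + t + \cdots + t^{m-1})$ in every factor and cancelling the common $(1-t)^q$ turns $Q_X(t)$ into
$$Q_X(t) = \frac{\prod_{i=1}^q (1 + t + \cdots + t^{2b_i - 1})}{\prod_{j=1}^r (1 + t + \cdots + t^{2a_j - 1})};$$
setting $t = 1$ gives $Q_X(1) = \dfrac{\prod_{i=1}^q 2b_i}{\prod_{j=1}^r 2a_j} = 2^{q-r}\dfrac{\prod_{i=1}^q b_i}{\prod_{j=1}^r a_j}$, which is exactly the middle term. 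Hence the first inequality is immediate.

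For the right-hand inequality I would bound the middle term by a short chain of crude estimates, each backed by a fact already proved. Since the $a$-exponents are positive integers, $\prod_{i=1}^r a_i \ge 1$, and since $r \ge 0$ we have $q - r \le q$; therefore
$$2^{q-r}\frac{\prod_{j=1}^q b_j}{\prod_{i=1}^r a_i} \le 2^q \prod_{j=1}^q b_j = \prod_{j=1}^q (2b_j).$$
The Corollaries above give $n_X \ge \sum_{j=1}^q b_j$, which forces $b_j \le n_X$ for each $j$, and $n_X \ge q + r \ge q$. Consequently every factor obeys $2b_j \le 2 n_X$ and there are $q \le n_X$ of them, so $\prod_{j=1}^q (2 b_j) \le (2n_X)^q \le (2 n_X)^{n_X}$, the last step using $2 n_X \ge 1$. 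Chaining the two displays yields the stated bound.

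The main thing to watch --- rather than a real obstacle --- is bookkeeping in two spots. First, before evaluating $Q_X(1)$ one must check that the orders of vanishing of numerator and denominator at $t = 1$ agree (both equal $q$), so that the limit is genuinely finite and the cancellation of $(1-t)^q$ is legitimate. Second, each crude inequality in the second chain must be tied to an already-established fact ($a_i \ge 1$, $b_j \le n_X$, $q \le n_X$) rather than merely assumed. I would also dispatch the degenerate case $X \simeq \mathrm{pt}$ (where $q = r = 0$ and $n_X = 0$) separately, so that the monotonicity step $(2n_X)^q \le (2 n_X)^{n_X}$ is invoked only when $2 n_X \ge 1$.
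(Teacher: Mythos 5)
Your proposal is correct and takes essentially the same approach as the paper: you compute $Q_X(1)=2^{q-r}\frac{\prod_{j=1}^q b_j}{\prod_{i=1}^r a_i}$ via the factorization $1-t^{m}=(1-t)(1+t+\cdots+t^{m-1})$ and cancellation of $(1-t)^q$, which is exactly the paper's second modification of $Q_X(t)$. Your right-hand chain $\prod_{j=1}^q(2b_j)\le(2n_X)^q\le(2n_X)^{n_X}$ uses the same inputs as the paper's $2^{q-r}\frac{\prod_{j=1}^q b_j}{\prod_{i=1}^r a_i}\le 2^{n_X}(n_X)^{n_X}$ (namely $a_i\ge 1$, $b_j\le n_X$ from $n_X\ge\sum_j b_j$, and $q\le n_X$ from $n_X\ge q+r$), differing only in how the powers of $2$ are grouped.
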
 
\begin{proof}
We write down a proof for the sake of convenience of the reader.
\begin{align*}
Q_X(t)=\frac{\prod_{i=1}^q (1 - t^{2b_i})}{(1-t)^{q-r}\prod_{i=1}^r (1 - t^{2a_i})}
 &= \frac{(1-t^2)^q \prod_{j=1}^q (1+t^2 + \cdots +(t^2)^{b_j-1})} {(1-t)^{q-r} (1-t^2)^r \prod_{i=1}^r (1+t^2 + \cdots +(t^2)^{a_i-1})} \\
&= \frac{(1-t^2)^{q-r} \prod_{j=1}^q (1+t^2 + \cdots +(t^2)^{b_j-1})} {(1-t)^{q-r}\prod_{i=1}^r (1+t^2 + \cdots +(t^2)^{a_i-1})}\\
&=\frac{(1+t)^{q-r} \prod_{j=1}^q (1+t^2 + \cdots +(t^2)^{b_j-1})} {\prod_{i=1}^r (1+t^2 + \cdots +(t^2)^{a_i-1})},
\end{align*}
which implies that $Q_X(1) = 2^{q-r} \frac{\prod_{j=1}^q b_j}{ \prod_{i=1}^r a_i}.$ Or we do the following modification:
\begin{align*}
Q_X(t) =\frac{\prod_{i=1}^q (1 - t^{2b_i})}{(1-t)^{q-r}\prod_{i=1}^r (1 - t^{2a_i})}
& = \frac{(1-t)^q \prod_{j=1}^q (1+t + \cdots +t^{2b_j-1})} {(1-t)^{q-r} (1-t)^r \prod_{i=1}^r (1+t + \cdots +t^{2a_i-1})}\\
&= \frac{\prod_{j=1}^q (1+t + \cdots +t^{2b_j-1})} {\prod_{i=1}^r (1+t + \cdots +t^{2a_i-1})}
\end{align*}
which also implies $Q_X(1) = \frac{\prod_{j=1}^q 2b_j}{ \prod_{i=1}^r 2a_i} = 2^{q-r} \frac{\prod_{j=1}^q b_j}{ \prod_{i=1}^r a_i}.$
Hence
$$\dim H_*(X;\Q) = P_X(1) \leq Q_X(1) \leq  2^{q-r} \frac{\prod_{j=1}^q b_j}{ \prod_{i=1}^r a_i}.$$
As to the second inequality 
$2^{q-r} \frac{\prod_{j=1}^q b_j}{ \prod_{i=1}^r a_i} \leq (2n_X)^{n_X}$, 
their proof is not written in \cite{FH}, but it must be as follows:
$$2^{q-r} \frac{\prod_{j=1}^q b_j}{ \prod_{i=1}^r a_i} \leq 2^{n_X} \prod_{j=1}^q n_X \leq 2^{n_X} (n_X)^{n_X} = (2n_X)^{n_X}.$$
Because $n_X \geq q+r$ and $n_X \geq \sum_{j=1}^q b_j$.
\end{proof}
%%%%%%%%%%%%%
In fact, a sharper and better one holds:
\begin{equation}\label{2^n}
2^{q-r} \frac{\prod_{j=1}^q b_j}{ \prod_{i=1}^r a_i} \leq 2^{n_X},
\end{equation}
thus we get
\begin{equation}\label{dim-2^n}
\dim H_*(X;\Q) \leq 2^{n_X}
\end{equation}
 So, Gromov conjecture holds for $\mathbb F=\mathbb Q$.
 %%%%%%%%%
\begin{proof}[Proof of (\ref{2^n}) (by Halperin \cite{Hal2})]
$$\dim H_*(X;\Q) \leq 2^{q-r} \frac{\prod_{j=1}^q b_j}{ \prod_{i=1}^r a_i} = {} \frac{\prod_{j=1}^q 2b_j}{ \prod_{i=1}^r 2a_i} {} <\prod_{j=1}^q 2b_j \leq {} \prod_{j=1}^q 2^{b_j}= {} 2^{\sum_{j=1}^q b_j} \leq 2^{n_X}.$$
%%%%%%%%%%%
\end{proof}
\begin{rem} The above inequality (\ref{dim-2^n}) is in \cite[Theorem 2.75, p.85]{FOT}, whose references are Halperin's paper \cite{Hal} and F\'elix--Halperin--Thomas's book \cite{FHT}, but such a formula is not written anywhere in these two references. So, we understand that $(2n_X)^{n_X}$ in the above inequality $2^{q-r} \frac{\prod_{j=1}^q b_j}{ \prod_{i=1}^r a_i} \leq (2n_X)^{n_X}$ 
was  \emph{not a misprint} of $2^{n_X}$.
\end{rem}
%%%%%%%%
\begin{rem}
It might be interesting to obtain a better or sharper inequality for the inequalities $\dim H_*(X;\Q) \leq 2^{q-r} \frac{\prod_{j=1}^q b_j}{ \prod_{i=1}^r a_i} \leq 2^{n_X}$.
\begin{enumerate}
\item For example, a bit sharper one is the following. If we use the descending order for $b$-exponents and $a$-exponents as in Corollary \ref{ba}, $b_1$ is the  maximum of the $b$-exponents and $a_r$ is the minimum of the $a$-exponents. Hence we have the following inequalities
\begin{equation}\label{b_1}
\dim H_*(X;\Q) \leq 2^{q-r}\frac{\prod_{j=1}^q b_j}{ \prod_{i=1}^r a_i} \leq 2^{q-r}\frac{(b_1)^q}{(a_r)^r} = \frac{(2b_1)^q}{(2a_r)^r} \leq \frac{(2b_1)^q}{2^r}=2^{q-r}(b_1)^q.
\end{equation}
E.g., if $X = \mathbb {CP}^n$, then we have $\dim H_*( \mathbb {CP}^n;\Q) \leq 2^{q-r}(b_1)^q= n+1.$
\item If $a_r \geq 2$, then we get a bit better one:
\begin{equation}
\dim H_*(X;\Q) \leq  2^{q-r} \frac{\prod_{j=1}^q b_j}{ \prod_{i=1}^r a_i}  \leq \frac{(2b_1)^q}{(2a_r)^r} = 2^{q-r}\frac{(b_1)^q}{(a_r)^r}
\end{equation}
E.g., if $X = \mathbb S^{2n}$, then we have $\dim H_*(S^{2n};\Q) \leq \displaystyle 2^{q-r}\frac{(b_1)^q}{(a_r)^r}= \frac{2n}{n}=2.$
\item If we use the following formula for geometric and arithmetic means
$$\left (\prod_{i=1}^n x_i \right) ^{\frac{1}{n}} \leq \frac{1}{n} \sum_{i=1}^n x_i.$$
we can get the following inequality:
\begin{equation}
\dim H_*(X;\Q) \leq  2^{q-r} \frac{\prod_{j=1}^q b_j}{ \prod_{i=1}^r a_i} \leq \left (\frac{2n_X}{q} \right )^q.
\end{equation}
Indeed,
$$2^{q-r} \frac{\prod_{j=1}^q b_j}{ \prod_{i=1}^r a_i} \leq 2^{q-r} \left (\frac{1}{q}\sum_{j=1}^qb_j \right )^q \leq \left(\frac{2}{q}\right)^q \left (\sum_{j=1}^qb_j \right )^q \leq \left (\frac{2n_X}{q} \right )^q.$$
E.g., $\dim H_*( \mathbb {CP}^n;\Q) \leq 2(2n)=4n$ and $\dim H_*(S^{2n};\Q) \leq 2(2n)=4n$.
\item In fact, the proof of (\ref{2^n}) can be modified to get a better inequality:
$$\dim H_*(X;\Q) \leq 2^{q-r} \frac{\prod_{j=1}^q b_j}{ \prod_{i=1}^r a_i} = \frac{\prod_{j=1}^q 2b_j}{ \prod_{i=1}^r 2a_i} \leq \frac{\prod_{j=1}^q 2b_j}{2^r} \leq \frac{\prod_{j=1}^q 2^{b_j}}{2^r} = \frac{2^{\sum_{j=1}^q b_j}}{2^r} \leq 2^{n_X-r}.$$
\end{enumerate}
\end{rem}
%%%%%%%%
\begin{rem}
In \cite[Theorem 1]{Pa} Andrey V. Pavlov showed the following inequalities:
\begin{equation}\label{n-m}
\dim H_k(X;\Q) \leq \binom{n_X}{m},
\end{equation}
\begin{equation}\label{n.m.1}
\dim H_k(X;\Q) \leq \sum_{k+2\ell=m}\binom{q-r}{k} \binom{p}{\ell}.
\end{equation}
(\ref{n-m}) can be shown by $c_m \leq \binom{n_X}{m}$, where $c_m$ is the coefficient of $t^m$ of the polynomial $Q_X(t)$ (see \ref{Q}). A crucial key of Pavlov's theorem is that the roots of the polynomial $Q_X(t)$ are roots of the unity. Clearly (\ref{n-m}) also implies $\dim H_*(X;\Q) \leq 2^{n_X}$. In fact, (\ref{n.m.1}) implies the following inequality \cite[Corollary ]{Pa}:
\begin{equation*}\label{n.m}
\dim H_m(X;\Q) \leq \frac{1}{2}\binom{n_X}{m} (m \not =0, n_X),
\end{equation*}
which implies $\dim H_*(X;\Q) \leq 2^{n_X-1}+1$.
\end{rem}
%%%%%%%%%%
\begin{rem} If one finds a compact Riemannian manifold $M$ of dimension $n$ with non-negative sectional curvature or entire Grauert tube such that $2^{n-1}+1 <\dim H_*(X;\Q) \leq 2^n$, then this manifold would be a counterexample to Bott conjecture, although it still satisfies Gromov conjecture.
\end{rem}
%%%%%%%%%%%
Before closing this section, we want to revise Theorem \ref{P<Q} a bit. In this theorem we assume that $q>r$ and $r \geq 1$. For example, as in the case when $X=S^{2n+1}$ is an odd sphere, it can happen that $r=0$. In fact, for any rationally elliptic space $X$, Theorem \ref{P<Q} still holds in the following sense:
\begin{thm} For any rationally elliptic space $X$ with $b$-exponents $(b_1,\cdots, b_q)$ and $a$-exponents $(a_1,\cdots,a_r)$, we have
$$P_X(t) \leq \frac{\prod_{i=1}^q (1 - t^{2b_i})}{(1-t)^{q-r}\prod_{j=1}^r (1 - t^{2a_j})}.$$ 
Here, if $r=0$, then it is understood that $\prod_{j=1}^r (1 - t^{2a_j})=1$. 
\end{thm}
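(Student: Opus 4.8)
The plan is to reduce the statement to the single genuinely new case and to settle that case by an elementary dimension count, since the other cases are already recorded above. Writing $q=\op{dim}(\pi_{\op{odd}}(X)\otimes\Q)$ and $r=\op{dim}(\pi_{\op{even}}(X)\otimes\Q)$ as before, there are four possibilities. If $q=r=0$ then $X$ is rationally a point and both sides equal $1$. If $q=r\geq 1$ then $q-r=0$, the factor $(1-t)^{q-r}$ is $1$, and Theorem \ref{Halp-2}(3) gives the equality $P_X(t)=\prod_{i=1}^q(1-t^{2b_i})/\prod_{i=1}^q(1-t^{2a_i})$, so the asserted inequality holds with equality. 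If $q>r\geq 1$ this is exactly Theorem \ref{P<Q}. Hence the only case requiring a new argument is $r=0$ with $q\geq 1$, which I would isolate at the outset.

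For $r=0$ the right-hand side reduces to
$$Q_X(t)=\frac{\prod_{i=1}^q(1-t^{2b_i})}{(1-t)^q}=\prod_{i=1}^q (1+t+\cdots+t^{2b_i-1}),$$
and I would prove $P_X(t)\leq Q_X(t)$, in the coefficientwise sense of Theorem \ref{P<Q}, by factoring the comparison through the exterior algebra on the odd generators. Since $r=0$, the minimal Sullivan model of $X$ has no generators in even degree, so its underlying graded algebra is the finite-dimensional exterior algebra $\Lambda(y_1,\dots,y_q)$ with $\op{deg}y_i=2b_i-1$, whose Poincar\'e polynomial is $\prod_{i=1}^q(1+t^{2b_i-1})$; moreover $H^*(X;\Q)$ is the cohomology of a differential on this algebra. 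The first step is the tautological bound $\op{dim}H^k(X;\Q)\leq\op{dim}\Lambda(y_1,\dots,y_q)^k$, because $H^k=Z^k/B^k$ is a subquotient of the degree-$k$ part of $\Lambda(y_1,\dots,y_q)$; by the universal coefficient theorem this gives, coefficientwise, $P_X(t)\leq\prod_{i=1}^q(1+t^{2b_i-1})$. The second step is the elementary coefficientwise inequality $1+t^{2b_i-1}\leq 1+t+\cdots+t^{2b_i-1}$ for each $i$ (the two monomials on the left occur among the $2b_i$ monomials on the right); multiplying over $i$, which preserves coefficientwise domination since every factor has non-negative coefficients, yields $\prod_{i=1}^q(1+t^{2b_i-1})\leq Q_X(t)$. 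Chaining the two steps finishes the case $r=0$.

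The point I expect to need the most care is explaining why the case $r=0$ cannot simply be deduced from Theorem \ref{P<Q} by the obvious product trick, and hence why the direct argument is necessary. Applying Theorem \ref{P<Q} to $X\times S^2$ (which has $r'=1$ and $q'=q+1>1$) only produces, via K\"unneth together with $Q_{X\times S^2}(t)=Q_X(t)(1+t^2)$, the weaker relation that $(Q_X(t)-P_X(t))(1+t^2)$ has non-negative coefficients; this does \emph{not} imply $Q_X(t)-P_X(t)\geq 0$ coefficientwise, as one already sees from $(t-t^3+t^5)(1+t^2)=t+t^7$. So the multiplier $(1+t^2)$ cannot be cancelled, and the genuine input is the exterior-algebra bound, which is available precisely because the vanishing of the even exponents makes the model algebra finite-dimensional. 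I would also note that for $q\geq 1$ and $r=0$ one has $\chi^{\pi}(X)=-q<0$, so Theorem \ref{hal-1} forces $\chi(X)=0$; this is consistent with both $P_X(t)$ and $Q_X(t)$ vanishing at $t=-1$, since an exterior algebra on odd generators has Euler characteristic $0$.
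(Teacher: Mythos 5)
Your case division matches the paper's, and for $q=r$ and for $q>r\geq 1$ you and the paper invoke exactly the same prior results; the divergence is in the genuinely new case $r=0$, where --- notably --- the ``obvious product trick'' you dismiss is precisely the paper's own proof: the paper applies Theorem \ref{P<Q} to $X\times S^2$, whose $b$-exponents are $(b_1,\dots,b_q,2)$ and $a$-exponents are $(1)$, obtains $P_{X\times S^2}(t)=P_X(t)(1+t^2)\leq Q_X(t)(1+t^2)$, and then cancels the factor $(1+t^2)$ from both sides. Your counterexample $(t-t^3+t^5)(1+t^2)=t+t^7$ correctly shows that such cancellation does not in general preserve coefficientwise domination, so under the coefficientwise reading of Theorem \ref{P<Q} (which is how that theorem is stated: $\op{dim}H_m(X;\Q)\leq c_m$ for each $m$) the paper's cancellation step is a genuine gap; it is legitimate only under the weaker pointwise reading, since $1+t^2>0$ for all real $t$, which yields $P_X(t)\leq Q_X(t)$ as functions on $[0,\infty)$ but not coefficient by coefficient. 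Your replacement argument --- with $r=0$ the minimal Sullivan model has only odd generators, so $H^*(X;\Q)$ is a subquotient of the finite-dimensional exterior algebra $\Lambda(y_1,\dots,y_q)$, whence $P_X(t)\leq\prod_{i=1}^q(1+t^{2b_i-1})$ coefficientwise, and then $1+t^{2b_i-1}\leq 1+t+\cdots+t^{2b_i-1}$ monomial by monomial gives $P_X(t)\leq Q_X(t)$ --- is correct and proves the stronger coefficientwise statement self-containedly. The trade-off is clear: the paper's route is shorter and stays entirely within the Friedlander--Halperin black box, consistent with its stated policy of avoiding Sullivan model theory, but is airtight only for the pointwise interpretation; your route invokes minimal models, which the paper deliberately eschews, but both repairs and strengthens the result, and in fact your intermediate bound $\prod_{i=1}^q(1+t^{2b_i-1})$ is sharper than $Q_X(t)$ itself in this case, as your own example of products of odd spheres (where that bound is attained with equality) illustrates.
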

\begin{proof} First we should note that $q \geq r$ since $X$ is rationally elliptic, thus $\chi^{\pi}(X)=r-q \leq 0$.
\begin{enumerate}
\item In the case when $q=r$, the \emph{equality} holds as shown in Theorem \ref{Halp-2} above.
\item In the case when $q>r$ and $r \geq 1$, it is nothing but Theorem \ref{P<Q}.
\item It remains to see the case when $r=0$. We consider the product space $X \times S^2$. Then, since we have $S_2(S^2)\otimes \Q=\Q$ and $S_3(S^2)\otimes \Q=\Q$ with $3=2\cdot 2 -1$, we have that $b$-exponents and $a$-exponents of $X \times S^2$ are respectively $\{b_1, \cdots, b_q, 2\}$ and $\{1\}$. Therefore it follows from 
Theorem \ref{P<Q} that we have
\begin{equation}\label{eq-9}
P_{X \times S^2}(t) \leq \frac{\left \{ \prod_{i=1}^q (1 - t^{2b_i}) \right \} (1-t^{2\cdot2})}{(1-t)^{(q+1)-1} (1-t^{2\cdot 1})} = \frac{\left \{ \prod_{i=1}^q (1 - t^{2b_i}) \right \} (1+t^2)}{(1-t)^q}.
\end{equation}
Since $P_{X \times S^2}(t)=P_X(t) \times P_{S^2}(t)=P_X(t) \times (1+t^2)$, cancelling out the term $(1+t^2)$ of the both sides of (\ref{eq-9}), the above inequality becomes
\begin{equation}
P_X(t) \leq \frac{\prod_{i=1}^q (1 - t^{2b_i})}{(1-t)^q}.
\end{equation}
\end{enumerate}
\end{proof}
%%%%%%%
\begin{ex} For example, consider $X= S^{2b_1-1} \times \cdots \times S^{2b_q-1}$. Then we have
$$P_X(t) = \prod_{i=1}^q (1+ t^{2b_i-1}) < \frac{\prod_{i=1}^q (1 - t^{2b_i})}{(1-t)^q} = \prod_{i=1}^q (1+t + t^2 + \cdots + t^{2b_i-1}).$$
The equality does not hold since $b_i \geq 2$.
\end{ex}

%%%%%%%%%%%%
\section {Rationally elliptic smooth toric varieties}
Indranil Biswas, Vicente Mu\~noz and Aniceto Murillo \cite{BMM} proved 
\begin{thm}[Biswas--Mu\~noz--Murillo] \label{bmm}
If a compact smooth troic variety $X$ is rationally elliptic, then its Poincar\'e polynomial $P_X(t)$ is equal to that of a product of complex projective spaces. I.e., if $dim X =n$, then $P_X(t) = P_{\mathbb {CP}^{m_1}}(t) \times \cdots \times P_{\mathbb {CP}^{m_k}}(t)$ with $n=m_1 + \cdots m_k$.
\end{thm}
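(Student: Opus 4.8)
The plan is to deduce the result from two structural facts pulled from opposite directions: on the topological side, that a positively elliptic space has a complete-intersection rational cohomology ring whose polynomial generators sit in degrees $2a_i$; and on the geometric side, that the rational cohomology ring of a smooth compact toric variety is generated in degree $2$.

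First I would observe that a smooth compact toric variety $X$ admits a cell decomposition (coming from a generic one-parameter subgroup acting on the fan) with cells only in even real dimension, so that $H_{\op{odd}}(X;\Q)=0$. By Theorem \ref{hal-1} this forces $\chi(X)>0$ and $\chi^{\pi}(X)=0$, i.e. $X$ is positively elliptic and $q=r$. Theorem \ref{Halp-2}(3) then applies verbatim and yields
$$P_X(t)=\frac{\prod_{i=1}^{q}(1-t^{2b_i})}{\prod_{i=1}^{q}(1-t^{2a_i})}.$$
So everything reduces to pinning down the $a$-exponents.

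Next I would argue that all $a_i=1$. Since $X$ is positively elliptic, its minimal Sullivan model is pure and its rational cohomology is a complete intersection
$$H^*(X;\Q)\cong \Q[x_1,\dots,x_r]/(f_1,\dots,f_r),\qquad \deg x_i=2a_i,\ \deg f_j=2b_j,$$
where the regular sequence $(f_j)$ has $\deg f_j=2b_j\geq 4$ (indeed $b_j\geq 2a_j\geq 2$ by Corollary \ref{ba}). Because the relations carry degree $\geq 4$, the minimal algebra generators of $H^*(X;\Q)$ are exactly the classes $\bar x_i$, living in degrees $2a_i$. On the other hand, for a smooth compact toric variety the Jurkiewicz--Danilov description presents $H^*(X;\Q)$ as the Stanley--Reisner ring modulo a linear system of parameters, so it is generated as an algebra by $H^2(X;\Q)$. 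Comparing the two presentations, every minimal generator must have degree $2$, hence $2a_i=2$ for all $i$, i.e. $a_1=\cdots=a_r=1$.

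Finally I would substitute $a_i=1$ into the Poincar\'e polynomial and factor:
$$P_X(t)=\frac{\prod_{i=1}^{q}(1-t^{2b_i})}{(1-t^2)^{q}}=\prod_{i=1}^{q}\frac{1-t^{2b_i}}{1-t^2}=\prod_{i=1}^{q}P_{\mathbb{CP}^{b_i-1}}(t),$$
so the claim holds with $m_i=b_i-1$. The dimension count is then automatic: by Theorem \ref{Halp-2}(1) and $q=r$, $n_X=\sum_{j}(2b_j-1)-\sum_i(2a_i-1)=2\sum_j(b_j-1)$, whence the complex dimension is $n=\tfrac12 n_X=\sum_j(b_j-1)=\sum_j m_j$. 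I expect the main obstacle to be not this closing manipulation but the reconciliation of the two presentations of $H^*(X;\Q)$: one must correctly invoke the structure theorem for positively elliptic ($F_0$) spaces in order to identify the degrees of the minimal cohomology generators with the even homotopy degrees $2a_i$, and pair this cleanly with the degree-$2$ generation of the cohomology of a smooth complete toric variety. Once these two inputs are secured, the conclusion is forced.
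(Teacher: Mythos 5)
The paper itself gives no proof of this theorem --- it is quoted from \cite{BMM} as a known result --- so there is no internal argument to compare against; your proposal in effect supplies the missing proof, and it is correct and follows the natural (and, as far as the structure of \cite{BMM} and its refinement in \cite{LY1} goes, essentially the standard) route: evenness of toric cohomology plus Theorem \ref{hal-1} gives positive ellipticity and $q=r$, Halperin's structure theory pins the cohomology down as a complete intersection on generators of degrees $2a_i$, Danilov--Jurkiewicz forces $a_i=1$, and Theorem \ref{Halp-2}(3) then factors $P_X(t)$ as $\prod_i \bigl(1-t^{2b_i}\bigr)/\bigl(1-t^2\bigr)^q=\prod_i P_{\mathbb{CP}^{b_i-1}}(t)$, with the dimension count coming from Theorem \ref{Halp-2}(1).

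Two points deserve tightening. First, rational ellipticity as defined here presupposes simple connectivity, so you should record that a smooth compact toric variety is simply connected (the primitive generators of the rays of any maximal cone of the fan form a basis of the cocharacter lattice, so the rays generate it and $\pi_1$ vanishes). Second, your justification that the minimal algebra generators of $H^*(X;\Q)$ sit exactly in degrees $2a_i$ --- ``because the relations carry degree $\geq 4$'' --- is not by itself sufficient: a homogeneous relation $f_j$ of degree $2b_j$ could a priori contain a \emph{linear} term $c\,x_k$ whenever $2a_k=2b_j$ for some $k\neq j$ (Corollary \ref{ba} only pairs the sorted sequences, so this is not excluded by degree reasons alone). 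What actually rules it out is minimality of the Sullivan model: $dy_j=f_j$ must be decomposable, i.e.\ $f_j\in(x_1,\dots,x_r)^2$, and then no $\bar{x}_k$ can become decomposable in the quotient, so the space of indecomposables is concentrated precisely in degrees $2a_1,\dots,2a_r$. With that one repair, the comparison with the degree-$2$ generation of $H^*(X;\Q)$ forces $a_1=\cdots=a_r=1$ and the rest of your argument closes without further issue.
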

Thus, since $X$ is rationally elliptic and $\chi(X) >0$, as a corollary we get
\begin{cor} The Hilali conjecture holds for a rationally elliptic smooth toric variety.
\end{cor}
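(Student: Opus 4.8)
The plan is to deduce the corollary from the earlier Proposition on \emph{positively elliptic} spaces (the one asserting that rational ellipticity together with $\chi(X)>0$ already forces the Hilali inequality). Since $X$ is rationally elliptic by hypothesis, the only ingredient I need to supply is the strict positivity $\chi(X)>0$, after which the conclusion is immediate.

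First I would extract $\chi(X)>0$ from Theorem \ref{bmm}. That theorem gives $P_X(t) = P_{\mathbb {CP}^{m_1}}(t) \times \cdots \times P_{\mathbb {CP}^{m_k}}(t)$, and each factor $P_{\mathbb {CP}^{m_j}}(t) = 1 + t^2 + \cdots + t^{2m_j}$ is supported in even degrees. Hence the product $P_X(t)$ is supported in even degrees as well, which is to say $H_{\op{odd}}(X;\Q)=0$. By the equivalence of conditions (2) and (3) in Halperin's Theorem \ref{hal-1}, the vanishing $H_{\op{odd}}(X)\otimes\Q=0$ is precisely equivalent to $\chi(X)>0$; so $\chi(X)>0$ as required.

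With $X$ rationally elliptic and $\chi(X)>0$ in hand, the earlier Proposition on positively elliptic spaces applies verbatim and yields $\dim(\pi_*(X)\otimes\Q)\le\dim H_*(X;\Q)$, i.e.\ the Hilali conjecture for $X$. There is no serious obstacle here: the two substantive inputs --- Biswas--Mu\~noz--Murillo's determination of $P_X(t)$ and the positively elliptic Proposition derived from Friedlander--Halperin --- carry all the weight, and the connecting step is merely the observation that a Poincar\'e polynomial concentrated in even degrees has $\chi(X)=P_X(1)>0$. The one point worth flagging is that it is Theorem \ref{bmm} (rather than any a priori Hodge-theoretic input or toric cell-decomposition argument) that licenses the even form of $P_X(t)$ in this rationally elliptic toric setting, and hence the positivity of $\chi(X)$ on which the whole reduction rests.
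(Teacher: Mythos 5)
Your proposal is correct and matches the paper's own argument: the paper likewise deduces $\chi(X)>0$ from the Biswas--Mu\~noz--Murillo form of $P_X(t)$ (all terms in even degree, so $\chi(X)=P_X(1)>0$) and then invokes the positively elliptic Proposition to conclude the Hilali inequality. Your detour through the equivalence $(2)\Leftrightarrow(3)$ in Halperin's theorem is harmless but unnecessary, since evenness of $P_X(t)$ gives $\chi(X)=P_X(-1)=P_X(1)>0$ directly.
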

In fact, the above theorem also holds for homotopical Poincar\'e polynomial $P^{\pi}_X(t)$, although we do not need it for the above corollary:
\begin{thm}[A. Libgober and S. Yokura \cite{LY1}]
If a compact smooth toric variety $X$ is rationally elliptic, then its homotopical Poincar\'e polynomial $P^{\pi}_X(t)$ is equal to that of a product of complex projective spaces in the above theorem. I.e., if $dim X =n$, then $P^{\pi}_X(t) = P^{\pi}_{\mathbb {CP}^{m_1}}(t) + \cdots + P^{\pi}_{\mathbb {CP}^{m_k}}(t)$ with $n=m_1 + \cdots m_k$.
\end{thm}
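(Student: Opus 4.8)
The plan is to reduce the assertion to a comparison of the $a$- and $b$-exponents of $X$ with those of the product of projective spaces produced by the homological result (Theorem \ref{bmm}), after first determining the even rational homotopy of $X$.

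First I would record two structural features of a compact smooth toric variety $X$: its rational cohomology is concentrated in even degrees, and the algebra $H^*(X;\Q)$ is generated by $H^2(X;\Q)$ (the classes of the torus-invariant divisors). Since $X$ is assumed rationally elliptic and $H_{\op{odd}}(X;\Q)=0$, Halperin's Theorem \ref{hal-1} gives $\chi^{\pi}(X)=0$, so $X$ is positively elliptic and the numbers of odd and even homotopy generators agree, $q=r$. In the notation of Theorem \ref{Halp-2}, with $a$-exponents $(a_1,\dots,a_r)$ and $b$-exponents $(b_1,\dots,b_q)$, the homotopical Poincar\'e polynomial is
$$P^{\pi}_X(t)=\sum_{i=1}^r t^{2a_i}+\sum_{j=1}^q t^{2b_j-1},$$
so the entire problem is to pin down the multisets $\{a_i\}$ and $\{b_j\}$.

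The key step is to show that \emph{every} $a_i$ equals $1$, i.e. that $\pi_{\op{even}}(X)\otimes\Q$ is concentrated in degree $2$. For this I would pass to the minimal Sullivan model $(\Lambda V,d)$ of $X$. Because $X$ is positively elliptic, $H^*(X;\Q)$ is a graded complete intersection and the minimal model is pure, so the even generators are $d$-cocycles (smooth complete toric varieties are moreover formal). Minimality gives $d(V)\subset\Lambda^{\ge 2}V$, and a short argument then shows that each nonzero $v\in V^{2k}$ represents a nonzero \emph{indecomposable} class in $H^{2k}(X;\Q)$: if $[v]$ were decomposable we would have $v=(\text{product})+d(\text{something})$, forcing $v\in\Lambda^{\ge 2}V$, contrary to $v$ being a generator. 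Since $H^*(X;\Q)$ is generated in degree $2$ it has no indecomposables above degree $2$, whence $V^{2k}=0$ for $k\ge 2$; together with the standard identification $V^2\cong H^2(X;\Q)$ this shows $a_i=1$ for all $i$ and that there are exactly $q=r=\dim H^2(X;\Q)$ of them. Theorem \ref{Halp-2}(3) then gives the clean form
$$P_X(t)=\frac{\prod_{j=1}^q(1-t^{2b_j})}{(1-t^2)^q}.$$

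It remains to compare this with the product of projective spaces. By Theorem \ref{bmm}, $P_X(t)=\prod_{l=1}^k \frac{1-t^{2(m_l+1)}}{1-t^2}$ with $n=m_1+\cdots+m_k$. Equating the coefficients of $t^2$ (that is, $\dim H^2$) gives $q=k$, and after cancelling $(1-t^2)^q$ the identity $\prod_j(1-t^{2b_j})=\prod_l(1-t^{2(m_l+1)})$ together with unique factorization into cyclotomic polynomials forces the multiset equality $\{b_1,\dots,b_q\}=\{m_1+1,\dots,m_k+1\}$. Substituting $a_i=1$ and this equality into the formula for $P^{\pi}_X(t)$ yields
$$P^{\pi}_X(t)=q\,t^2+\sum_{l=1}^k t^{2(m_l+1)-1}=\sum_{l=1}^k\bigl(t^2+t^{2m_l+1}\bigr)=\sum_{l=1}^k P^{\pi}_{\mathbb{CP}^{m_l}}(t),$$
which is the claim. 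The main obstacle is the middle paragraph: passing from ``cohomology generated in degree $2$'' to ``even homotopy concentrated in degree $2$'', since a priori higher even homotopy generators could be created by syzygies among the degree-$2$ relations. Ruling these out is exactly where positive ellipticity — equivalently, the complete-intersection (pure-model) structure of $H^*(X;\Q)$ — and the formality of smooth complete toric varieties are used.
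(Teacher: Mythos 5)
Your argument is correct, but note that the paper itself contains no proof of this statement: it is quoted from \cite{LY1} immediately after Theorem \ref{bmm}, so there is no in-paper argument to match yours against. The proof in \cite{LY1} (consistent with the stronger mixed Hodge refinement recorded here as Theorem \ref{mhh}) proceeds by pinning down the full rational homotopy type: toric manifolds are formal, so the minimal model --- and hence $\pi_*(X)\otimes\Q$ degree by degree --- is determined by the cohomology ring, which the Biswas--Mu\~noz--Murillo analysis identifies with that of a product of projective spaces. Your route is genuinely different in emphasis: you use only the weaker, Betti-number form of Biswas--Mu\~noz--Murillo actually stated in this paper and recover the exponents yourself. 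Two points in your middle paragraph deserve exactly the care you gave them. First, purity of the minimal model is correctly routed through formality together with the complete-intersection structure of positively elliptic cohomology (the Koszul/bigraded model of an evenly graded complete intersection is pure and minimal); without invoking toric formality you would instead need the known but nontrivial theorem that all positively elliptic spaces are formal, so citing formality of smooth complete toric varieties is the right move. Second --- and this is a hidden trap that your ordering of steps silently avoids --- the Poincar\'e polynomial alone does \emph{not} determine the exponents of a positively elliptic space, since the quotient $\prod_j(1-t^{2b_j})/\prod_i(1-t^{2a_i})$ admits cancellations: for instance $P_{S^2\times S^4}(t)=P_{\mathbb{CP}^3}(t)$ although the exponents differ. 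Because you first force all $a_i=1$ from degree-$2$ generation of $H^*(X;\Q)$ (Danilov--Jurkiewicz), no factor can cancel (all $b_j\geq 2$ by Corollary \ref{ba} or by minimality), and only then is your cyclotomic-factorization step legitimate; had you compared polynomials before fixing the $a_i$, the argument would fail. So the proof is complete and correct; what your approach buys is independence from the stronger (rational-homotopy-equivalence) form of the toric classification, at the cost of invoking the structure theory of positively elliptic spaces.
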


{}
%%%%%%%%%%%%
\section {Rationally elliptic K\"ahler manifolds}
Jaume Amor\'os and I. Biswas \cite[Theorem 1.1]{AB} proved 
{} 
\begin{thm}[Amor\'os--Biswas]
A simply connected compact complex K\"ahler surface is rationally elliptic if and
only if it belongs to the following list:

(i) $\mathbb {CP}^2$, 

(ii) Hirzebruch surfaces (
a ruled surface over $\mathbb {CP}^1$) $\mathbb S_h= \mathbb P_{\mathbb {CP}^1}(\mathcal O \oplus \mathcal O(h))$ for $h \geq 0$, 

(iii) a simply connected general type surface $X$ with $q(X) = p_g(X) = 0$, $K_X^2 = 8$ and $c_2(X) = 4$.
\end{thm}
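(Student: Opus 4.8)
The plan is to split the statement into a purely topological reduction, a classification step via Kodaira--Enriques theory, and a final verification of rational ellipticity for each surface on the list.

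First I would reduce everything to the second Betti number. Since $X$ is a simply connected closed $4$-manifold we have $b_0=b_4=1$, $b_1=b_3=0$, and $b_2\geq 1$ because the Kähler class is a nonzero element of $H^2(X;\Q)$; hence $\chi(X)=2+b_2>0$. Assuming $X$ rationally elliptic, Theorem \ref{hal-1} forces $\chi^{\pi}(X)=0$, i.e. the numbers $q=\op{dim}(\pi_{\op{odd}}(X)\otimes\Q)$ and $r=\op{dim}(\pi_{\op{even}}(X)\otimes\Q)$ coincide, and Theorem \ref{Halp-2} gives the formal-dimension identity $4=n_X=\sum_{j=1}^q(2b_j-1)-\sum_{i=1}^r(2a_i-1)$, whence $\sum b_j-\sum a_i=2$. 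Combining this with the inequalities $b_i\geq 2a_i$ of Corollary \ref{ba} and $a_i\geq 1$, a short case analysis on $q=r$ shows that the only possibilities are $q=r=1$ with $(b_1;a_1)=(3;1)$ giving $P_X(t)=1+t^2+t^4$ (so $b_2=1$), $q=r=1$ with $(b_1;a_1)=(4;2)$ giving $P_X(t)=1+t^4$ (so $b_2=0$, excluded as it is not Kähler), and $q=r=2$ with $(b_1,b_2;a_1,a_2)=(2,2;1,1)$ giving $P_X(t)=(1+t^2)^2$ (so $b_2=2$). Thus rational ellipticity of a simply connected compact Kähler surface forces $b_2\leq 2$, equivalently $c_2(X)=2+b_2\leq 4$.

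Next I would run through the Kodaira--Enriques classification to list the simply connected Kähler surfaces with $c_2\leq 4$. Simple connectivity excludes irrational ruled surfaces, so in Kodaira dimension $-\infty$ only rational surfaces survive, and among these exactly $\mathbb {CP}^2$ ($b_2=1$) and the Hirzebruch surfaces $\mathbb S_h$ ($b_2=2$) satisfy $b_2\leq 2$, every further blow-up raising $b_2$. Kodaira dimension $0$ gives K3 surfaces with $b_2=22$, and the simply connected properly elliptic surfaces (Kodaira dimension $1$) have $c_2=12\chi(\mathcal O_X)\geq 12$; both are excluded. For general type I would use Noether's formula $K_X^2+c_2=12\chi(\mathcal O_X)=12(1+p_g)$ together with the Hodge index theorem: since $b_2\leq 2$ and $h^{1,1}\geq 1$, the relation $b_2=2p_g+h^{1,1}$ forces $p_g=q=0$, hence $\chi(\mathcal O_X)=1$ and $K_X^2=12-c_2$. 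The value $c_2=3$ would give $K_X^2=9=3c_2$, equality in the Bogomolov--Miyaoka--Yau inequality, which characterizes ball quotients and therefore has infinite fundamental group, contradicting simple connectivity (this is exactly the fake-projective-plane phenomenon). The surviving case is $c_2=4$, $K_X^2=8$, $p_g=q=0$, which is precisely item (iii).

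Finally I would prove the converse, that each listed surface is rationally elliptic. For $\mathbb {CP}^2$ this is Example \ref{proj}; for the Hirzebruch surfaces it follows from the fibration $\mathbb {CP}^1\hookrightarrow \mathbb S_h\to \mathbb {CP}^1$, since the total space of a fibration of simply connected spaces with rationally elliptic base and fibre is rationally elliptic. For a type (iii) surface the crucial input is that compact Kähler manifolds are formal (Deligne--Griffiths--Morgan--Sullivan), so its rational homotopy type is determined by its rational cohomology ring; since $p_g=0$ the space $H^2(X;\Q)$ is of pure type $(1,1)$ and the Hodge index theorem makes the intersection form of signature $(1,1)$, so the ring is isomorphic to that of $\mathbb {CP}^2\#\overline{\mathbb {CP}^2}\cong \mathbb S_1$, which is already known to be rationally elliptic. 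The main obstacle is this general-type case: ruling out $c_2=3$ requires the BMY rigidity/ball-quotient argument rather than a homotopy-theoretic one, and establishing ellipticity in case (iii) genuinely needs the formality of Kähler manifolds in order to pass from the topologically forced cohomology ring back to the rational homotopy type. The remaining Kodaira dimensions are disposed of by crude Betti-number bounds and present no difficulty.
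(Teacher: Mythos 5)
Your argument is essentially correct, but first a point of comparison: the paper contains no proof of this statement at all — it is quoted verbatim from Amor\'os--Biswas \cite[Theorem 1.1]{AB} — so what you have written is a genuinely self-contained alternative rather than a parallel of anything internal. Your forward direction is in fact a nice repackaging using exactly the machinery the paper recalls elsewhere: Theorem \ref{hal-1} gives $\chi(X)=2+b_2>0 \Rightarrow \chi^{\pi}(X)=0$, hence $q=r$; Theorem \ref{Halp-2} with $n_X=4$ gives $\sum b_j-\sum a_i=2$; and Corollary \ref{ba} ($b_i\geq 2a_i$) pins the exponents to $(3;1)$, $(4;2)$ (excluded, $b_2=0$) or $(2,2;1,1)$, forcing $b_2\leq 2$. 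This replaces the Sullivan-model computation underlying the original proof by pure exponent arithmetic, which is in the spirit of the paper's stated intention to avoid model theory; the cost is that you then need the full Enriques--Kodaira classification plus Bogomolov--Miyaoka--Yau, which is heavier algebraic geometry than the topological reduction requires.

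Three small glosses should be patched before the argument is complete. (a) In ruling out $c_2=3$, BMY equality $K_X^2=3c_2$ characterizes ball quotients only for \emph{minimal} surfaces of general type; but minimality is automatic here, since $k\geq 1$ blow-downs would give a minimal model with $K^2=9+k$ and $c_2=3-k$, violating $K^2\leq 3c_2$. (b) In case (iii), signature $(1,1)$ alone does \emph{not} determine the rational quadratic form — e.g.\ $\langle 1,-2\rangle$ has a different discriminant modulo squares and would yield a non-isomorphic cohomology ring — so you must invoke unimodularity of the intersection form on $H^2(X;\Z)$ (integral Poincar\'e duality); the form is then $\langle 1,-1\rangle$ or the hyperbolic plane, and since these are $\Q$-isometric, the rational cohomology ring agrees with that of $\mathbb S_0=\mathbb{CP}^1\times\mathbb{CP}^1$ (equivalently $\mathbb S_1$), after which formality of both sides, via Deligne--Griffiths--Morgan--Sullivan, identifies the minimal models and yields ellipticity, as you say. (c) The formula $c_2=12\chi(\mathcal O_X)$ holds for \emph{relatively minimal} elliptic surfaces; non-minimal ones are disposed of because blowing up only increases $c_2$. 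With these repairs your proof is complete and correct.
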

%%%%%%%%%%
\begin{rem}\cite[Remark 3.1, p.1173]{AB}
$X$ in (iii) is called  
\emph{fake quadric}. Friedrich Hirzebruch asked whether 
fake quadrics exist. This question remains open. 
By Freedman's theorem  any 
fake quadric, \emph{if it exists}, is homeomorphic to quadric $\mathbb S_0 = \mathbb{CP}^1 \times \mathbb{CP}^1$ or Hirzebruch surface $\mathbb S_1= \mathbb P_{\mathbb {CP}^1}(\mathcal O \oplus \mathcal O(1))$ (the blow-up of $\mathbb {CP}^2$ at a point). 
\end{rem}
%%%%%%%%%%%%
\begin{rem}
\begin{enumerate}
\item  Blow-up of $\mathbb {CP}^2$ at \underline{two points} is K\"ahler, since blow-up of a K\"ahler manifold at a point is still K\"ahler, but \emph{not} rationally elliptic, because it is not in the above list. So, \emph{blow-up sometimes destroys rational ellipticity}.
\item F. Hirzebruch (Math. Ann., 1951) showed that 
$$\text{$\mathbb S_h$ is diffeomorphic to $\mathbb S_k$ $\Longleftrightarrow h \equiv k (mod \, \, 2)$.}$$
 Thus a simply connected rationally elliptic compact K\"ahler surface is homeomorphic to $\mathbb {CP}^2$ or $\mathbb S_0=\mathbb{CP}^1 \times \mathbb {CP}^1$ or $\mathbb S_1=\mathbb {CP}^2 \# \overline {\mathbb {CP}^2}$ (the blow-up of $\mathbb {CP}^2$ at a point).
\item $\mathbb S_h$ is a $\mathbb {CP}^1=S^2$-bundle over $\mathbb {CP}^1=S^2$, $\pi_*(\mathbb S_h)=\pi_*(S^2)\oplus \pi_*(S^2)$ (by the long exact sequence and the existence of a section ($\infty$-section)). $H^*(\mathbb S_h) \cong H^*(\mathbb {CP}^1) \otimes H^*(\mathbb {CP}^1)$ since $\pi_1(\mathbb {CP}^1)=0$. So, $\dim (\pi_*(\mathbb S_h)\otimes \Q)=\dim H_*(\mathbb S_h;\Q)=4$. So, $\mathbb S_h$ satisfies Hilali conjecture.
\end{enumerate}
\end{rem}
%%%%%%%%%%
$\mathbb {CP}^2$ is rationally elliptic and the blown-up $Blow_1(\mathbb {CP}^2)$ of $\mathbb {CP}^2$ at a point is still rationally elliptic, but the blown-up $Blow_2(\mathbb {CP}^2)$ of $\mathbb {CP}^2$ at two points is not rationally elliptic any more. Thus, it would be reasonable to pose the following question:
\begin{qu} 
\begin{enumerate}
\item Given a rationally elliptic complex manifold $M$, can one characterize properties of $M$ so that the blown-up of $M$ at a point is still rationally elliptic?
\item In general, given a rationally elliptic complex manifold $M$, can one characterize properties of $M$ so that the blown-up of $M$ at $k$ points is still rationally elliptic?
\end{enumerate}
\end{qu}
%%%%%%%%%
\begin{thm} \cite[Theorem 1.3]{AB}
If $X$ is a simply connected compact K\"ahler threefold which is rationally elliptic, then
the Hodge numbers satisfy that $h^{p,q}=0$ for $p \not =q$ and $h^{p,p}$ is one of the following:
\begin{enumerate}
\item $h^{0,0}=h^{1,1}=h^{2,2}=h^{3,3}=1$
\item $h^{0,0}=h^{3,3}=1$, $h^{1,1}=h^{2,2}=2.$
\item $h^{0,0}=h^{3,3}=1$, $h^{1,1}=h^{2,2}=3.$
\end{enumerate}
\end{thm}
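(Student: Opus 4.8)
The plan is to separate the problem into two stages: first pin down the rational Betti numbers of $X$ using only rational ellipticity, and then promote this to the stated Hodge numbers using K\"ahler--Hodge theory. Write $q=\dim(\pi_{\op{odd}}(X)\otimes\Q)$ and $r=\dim(\pi_{\op{even}}(X)\otimes\Q)$, with $b$-exponents $(b_1,\dots,b_q)$ and $a$-exponents $(a_1,\dots,a_r)$. Being a compact K\"ahler threefold, $X$ has formal dimension $n_X=6$, satisfies Poincar\'e duality (Theorem \ref{Halp-2}), has all odd Betti numbers even, and has $b_2\geq 1$ because a K\"ahler class is a nonzero element of $H^2$; simple connectivity gives $b_1=0$ (so $h^{1,0}=0$), and by the Hurewicz theorem $\dim(\pi_2(X)\otimes\Q)=b_2\geq 1$, whence $r\geq 1$.

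First I would show $X$ is positively elliptic, i.e.\ $q=r$. Ellipticity already gives $\chi^{\pi}(X)=r-q\leq 0$ (Theorem \ref{hal-1}), so $q\geq r$. From the formula $n_X=\sum_{j=1}^q(2b_j-1)-\sum_{i=1}^r(2a_i-1)$ of Theorem \ref{Halp-2}, reduction mod $2$ gives $n_X\equiv q+r\pmod 2$; since $n_X=6$ is even, $q-r$ is even. On the other hand, using $b_i\geq 2a_i$ (Corollary \ref{ba}) and $b_j\geq 2$ one has the sharpened bound $n_X\geq 3q-r$ recorded after that corollary. If $q>r$ then $q-r\geq 2$, so, using $r\geq 1$, $n_X\geq 3q-r=3(q-r)+2r\geq 6+2=8>6$, a contradiction. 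Hence $q=r$, and Theorem \ref{hal-1} then gives $H_{\op{odd}}(X)\otimes\Q=0$, so $b_1=b_3=b_5=0$ and $\chi(X)>0$. Substituting $q=r$ into the formal-dimension formula yields $\sum_{i=1}^q(b_i-a_i)=3$, and since $b_i\geq 2a_i$ forces $b_i-a_i\geq a_i\geq 1$, we get $q\leq 3$. Running through the finitely many admissible exponent systems and applying the Poincar\'e-polynomial formula of Theorem \ref{Halp-2}(\ref{6}) (and discarding the unique non-K\"ahler case $b_2=0$) leaves precisely $b_2\in\{1,2,3\}$; together with Poincar\'e duality this forces $P_X(t)=1+b_2t^2+b_2t^4+t^6$, which are exactly the Betti numbers of the three listed diamonds.

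The second stage converts these into Hodge numbers. From $b_3=0$ we get $h^{3,0}=h^{2,1}=0$ and from $b_1=0$ we get $h^{1,0}=0$; Serre duality gives $h^{2,2}=h^{1,1}$ and $h^{3,1}=h^{2,0}$. Thus the whole statement reduces to proving $h^{2,0}=0$, after which $h^{1,1}=h^{2,2}=b_2$, matching diamonds (1), (2), (3) for $b_2=1,2,3$. Writing $b_2=2h^{2,0}+h^{1,1}$ with $h^{1,1}\geq 1$ (the K\"ahler class lies in $H^{1,1}$), the cases $b_2=1$ and $b_2=2$ force $h^{2,0}=0$ at once, settling diamonds (1) and (2).

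The hard part is the case $b_2=3$, where a priori $(h^{2,0},h^{1,1})\in\{(0,3),(1,1)\}$ and one must exclude $(1,1)$. Here the purely homotopy-theoretic machinery is not enough: since $X$ is K\"ahler it is formal, so its rational type is that of the complete-intersection ring $H^*(X;\Q)\cong\Q[x_1,x_2,x_3]/(P_1,P_2,P_3)$ (three quadrics), and a direct check shows that this ring does carry a Hodge structure with $h^{2,0}=1$ compatible with hard Lefschetz and the Hodge--Riemann relations -- the relations $\omega^2=\bar\omega^2=0$ and $\omega\bar\omega=\lambda\eta^2$ forced on a holomorphic $2$-form $\omega$ still form a regular sequence for $\lambda\neq 0$. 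Consequently, ruling out a holomorphic $2$-form cannot be done by rational homotopy theory alone and requires genuine complex-geometric input: either the finer classification of rationally elliptic compact K\"ahler threefolds, or a structure result showing that a simply connected rationally elliptic threefold with $p_g=h^{3,0}=0$ admits no nonzero holomorphic $2$-form. This geometric step is the main obstacle. Once $h^{2,0}=0$ is known, $h^{1,1}=b_2=3$ gives diamond (3) and completes the proof.
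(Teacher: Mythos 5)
Your proposal cannot be compared with an in-paper argument, because the paper does not prove this statement: it is quoted without proof from Amor\'os--Biswas \cite[Theorem 1.3]{AB}. Measured on its own terms, your first stage is correct and is a genuinely nice use of the paper's toolkit: $n_X=6$, Hurewicz giving $r\geq 1$, the parity $n_X\equiv q+r \pmod 2$ read off from Theorem \ref{Halp-2}(1), and the sharpened bound $n_X\geq 3q-r$ together force $q=r$; Theorem \ref{hal-1} then kills odd homology, the formal-dimension formula gives $\sum_{i}(b_i-a_i)=3$ with $b_i-a_i\geq a_i\geq 1$ by Corollary \ref{ba}, so $q\leq 3$, and your enumeration (with $(a,b)=(2,5)$ excluded because $(1-t^{10})/(1-t^4)$ is not a polynomial, and $(3,6)$, i.e.\ the $S^6$-type with $b_2=0$, excluded by the K\"ahler class) correctly yields $P_X(t)=1+b_2t^2+b_2t^4+t^6$ with $b_2\in\{1,2,3\}$. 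The reduction of the full Hodge statement to $h^{2,0}=0$, and its resolution for $b_2\in\{1,2\}$ from $b_2=2h^{2,0}+h^{1,1}$ and $h^{1,1}\geq 1$, are also correct.

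However, as you acknowledge, the case $b_2=3$ is left unproven, and this is a genuine gap rather than a presentational one: the theorem as stated asserts $h^{p,q}=0$ for $p\neq q$, and excluding $(h^{2,0},h^{1,1})=(1,1)$ is precisely the hard content. Your own consistency check shows why nothing available in this paper can close it: the ring $\C[x,y,z]/(y^2,\,z^2,\,yz-\lambda x^2)$ with $x$ of type $(1,1)$, $y$ of type $(2,0)$, $z=\bar y$ and $\lambda>0$ real is a complete intersection with Hilbert series $(1+t^2)^3$, satisfies hard Lefschetz with respect to $x$ (multiplication by $x$ sends the basis $\{x,y,z\}$ of degree $2$ to the basis $\{x^2,xy,xz\}$ of degree $4$), and satisfies the Hodge--Riemann inequality $\int x\,y\,\bar y=\lambda\int x^3>0$ on the primitive part $\langle y,z\rangle$, while the primitive $(1,1)$-part is zero. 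So formality, rational ellipticity, Poincar\'e duality, hard Lefschetz and polarization are all simultaneously consistent with $h^{2,0}=1$, and the exclusion genuinely requires the complex-geometric analysis of a nonzero holomorphic $2$-form carried out in \cite{AB} --- input your proposal names as necessary but does not supply. The proposal should therefore be assessed as a complete and correct derivation of the Betti-number trichotomy plus a correct reduction, but only a partial proof of the cited theorem.
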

%%%%%%%%%%%
\begin{rem}
The Poincar\'e polynomials of the above K\"ahler threefold  are respectively 
\begin{enumerate}
\item $1+t^2+t^4+t^6$, 
\item $1+2t^2+2t^4+t^6=(1+t^2)(1+t^2+t^4)$, 
\item $1+3t^2+3t^4+t^6=(1+t^2)^3$.
\end{enumerate}
which are respectively the same as the Poincar\'e polynomial of 
\begin{enumerate}
\item $\mathbb {CP}^3$, 
\item $\mathbb {CP}^1 \times \mathbb {CP}^2$,
\item $\mathbb {CP}^1 \times \mathbb {CP}^1 \times \mathbb {CP}^1$.
\end{enumerate}
\end{rem}
Yang Su and Jianqiang Yang \cite{SuYa} roved 
%%%%%%%%
\begin{thm}
If $X$ is a simply connected compact K\"ahler fourfold which is rationally elliptic, then
the odd Betti numbers of $X$ are all zero (as above) and the Hodge numbers of $X$ is one of the following: $h^{p,q}=0$ if $p \not = q$ and $h^{p,p}$ is as follows:
\begin{enumerate}
\item $h^{0,0}=h^{1,1}=h^{2,2}=h^{3,3}=h^{4,4}=1$
\item $h^{0,0}=h^{1,1}=h^{3,3}=h^{4,4}=1$, $h^{2,2}=2.$
\item $h^{0,0}=h^{4,4}=1$, $h^{1,1}=h^{2,2}=h^{3,3}=2.$
\item $h^{0,0}=h^{4,4}=1$, $h^{1,1}=h^{3,3}=2$, $h^{2,2}=3.$
\item $h^{0,0}=h^{4,4}=1$, $h^{1,1}=h^{3,3}=3$, $h^{2,2}=4.$
\item $h^{0,0}=h^{4,4}=1$, $h^{1,1}=h^{3,3}=4$, $h^{2,2}=6.$
\end{enumerate}
\end{thm}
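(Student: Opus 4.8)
The plan is to split the argument into a topological/combinatorial part, which pins down the Betti numbers, and a Hodge-theoretic part, which upgrades them to Hodge numbers; the Kähler hypothesis is used decisively in both. First I would record that, being simply connected and compact Kähler, $X$ is formal (Deligne--Griffiths--Morgan--Sullivan) with $b_1(X)=0$, so $h^{1,0}=h^{0,1}=0$. The crucial first reduction is to show that $X$ is \emph{positively elliptic}, i.e.\ $\chi(X)>0$; by Theorem \ref{hal-1} this is equivalent to the vanishing of all odd Betti numbers and to $\chi^{\pi}(X)=0$, i.e.\ $q=r$ in the notation of Theorem \ref{Halp-2}. I would deduce this exactly as in the surface and threefold cases of Amor\'os--Biswas \cite{AB}: the formal minimal model of an elliptic formal space is pure, and a surviving odd generator (which is what $q>r$ produces) forces nonzero odd rational cohomology, which one rules out by combining the pure model with hard Lefschetz and the Hodge decomposition.

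Granting $q=r$, Theorem \ref{Halp-2} gives that $H^*(X;\Q)$ is the complete-intersection Poincar\'e-duality algebra with
$$P_X(t)=\frac{\prod_{i=1}^{q}(1-t^{2b_i})}{\prod_{i=1}^{q}(1-t^{2a_i})},$$
whose even generators have degrees $2a_i$ and whose relations have degrees $2b_i$, where $(B;A)$ satisfies the S.A.C.\ (Theorem \ref{sac}) and hence $b_i\ge 2a_i$ (Corollary \ref{ba}). The formal dimension $n_X=8$ gives $\sum_i(b_i-a_i)=4$, and together with $b_i\ge 2a_i$ this forces $\sum_i a_i\le 4$ and in particular $q\le 4$. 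There are therefore only finitely many admissible exponent pairs $(B;A)$, which I would enumerate by hand; for each I compute $P_X(t)$ and discard any pair violating the S.A.C.\ or the Kähler constraints $b_2\ge 1$ and hard Lefschetz $1=b_0\le b_2\le b_4$, $b_2=b_6$. (For instance the $\mathbb{HP}^2$-, $S^8$-, $S^6\times S^2$- and $S^4\times S^4$-type rings drop out because some even Betti number vanishes or unimodality fails.) The survivors are precisely the six Betti sequences $(1,b_2,b_4,b_2,1)$ with $(b_2,b_4)\in\{(1,1),(1,2),(2,2),(2,3),(3,4),(4,6)\}$, matching the six items of the theorem.

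It remains to upgrade Betti numbers to Hodge numbers, i.e.\ to prove the Hodge--Tate property $h^{p,q}=0$ for $p\neq q$. Since the odd cohomology vanishes, $h^{2,1}=h^{3,0}=0$ automatically, and the only off-diagonal Hodge numbers that could survive are $h^{2,0}$ in $H^2$ and $h^{4,0},h^{3,1}$ in $H^4$ (with their conjugates and Serre duals, e.g.\ $h^{4,2}=h^{2,0}$ in $H^6$). The essential Kähler-geometric input, again as in \cite{AB}, is that a simply connected rationally elliptic compact Kähler manifold carries no nonzero global holomorphic forms, so $h^{2,0}=h^{4,0}=0$ and hence $H^2=H^{1,1}$ and $b_4=2h^{3,1}+h^{2,2}$ with $h^{2,2}\ge 1$ (the class $\omega^2$). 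I then kill $h^{3,1}$ by a dichotomy. If $b_4\le 2$ (cases $1$--$3$), then $h^{3,1}\ge 1$ would give $b_4\ge 3$, so $h^{3,1}=0$. If $b_4\ge 3$ (cases $4$--$6$), the enumeration shows that no admissible pair with a generator in degree $\ge 4$ produces these Betti numbers, so $H^*(X;\Q)$ is generated by $H^2$, whence $H^4$ is spanned by products of $(1,1)$-classes and again $h^{3,1}=0$. Thus the Hodge structure is of Hodge--Tate type and $h^{p,p}=b_{2p}$, which is exactly the asserted list.

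The main obstacle is the genuinely Kähler/Hodge-theoretic content, namely the two inputs borrowed from \cite{AB}: positive ellipticity (vanishing of all odd Betti numbers) and the absence of holomorphic forms (giving $h^{2,0}=h^{4,0}=0$). These are the steps where one must leave the rational-homotopy-theoretic machinery of Halperin and Friedlander--Halperin recalled above and exploit hard Lefschetz together with the Hodge decomposition; they constitute, in effect, the fourfold instance of the Hodge--Tate phenomenon predicted by Libgober's conjecture. Once they are in place, the finite enumeration of exponent pairs and the passage from Betti to Hodge numbers are routine bookkeeping.
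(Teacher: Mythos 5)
First, a point of comparison that matters here: the paper contains no proof of this statement at all --- the theorem is quoted from Su--Yang \cite{SuYa} --- so your proposal is in effect an attempt to reprove their theorem from the ingredients the paper recalls. The numerical core of your argument is sound and complete: granting $q=r$, Theorem \ref{Halp-2} gives $\sum_i(b_i-a_i)=4$, Corollary \ref{ba} gives $b_i\ge 2a_i$, hence $\sum_i a_i\le 4$ and $q\le 4$, and the finite enumeration (filtered by S.A.C., $b_2\ge 1$, and $b_2\le b_4$ from hard Lefschetz) does yield exactly the six Betti sequences; I checked this and your list of discarded types is right. Your first step (positive ellipticity, i.e.\ $q=r$) is under-argued but repairable: the phrase ``the formal minimal model of an elliptic formal space is pure'' is not a citable theorem and is not what you need; Halperin's Theorem \ref{hal-1} already says $q>r$ is \emph{equivalent} to $H_{\operatorname{odd}}(X;\Q)\neq 0$, and what remains is the same exponent enumeration on the $q>r$ side, which you never perform. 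It does work: $n_X=8$ forces $q-r$ even and $\sum_j b_j-\sum_i a_i\in\{5,6,\dots\}$, and the constraints $b_j\ge 2$, $b_i\ge 2a_i$ (sorted) leave only the patterns of $S^3\times S^5$ ($r=0$, $B=(2,3)$, so $H^2=0$) and $\mathbb{CP}^1\times S^3\times S^3$ ($A=(1)$, $B=(2,2,2)$, where ellipticity forces the single relation $x^2$ and hence $H^4=0$); both die on $\omega^4\neq 0$. Also note that your passage from exponents to ``$H^*(X;\Q)$ generated in degree $2$'' in cases (4)--(6) uses the ring-level structure theorem for positively elliptic spaces (cohomology is $\Q[x_1,\dots,x_r]$ modulo a regular sequence, $\deg x_i=2a_i$), which must be cited from \cite{Hal}; the paper records only its Poincar\'e-polynomial shadow. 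Your dichotomy is correctly arranged, by the way: case (3) \emph{must} go through the counting branch, since its Betti numbers are also realized by the $S^4\times S^2\times S^2$ exponents $A=(2,1,1)$, $B=(4,2,2)$, for which degree-$2$ generation fails.

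The genuine gap is the Hodge-theoretic input you attribute to \cite{AB}: ``a simply connected rationally elliptic compact K\"ahler manifold carries no nonzero global holomorphic forms.'' No such general statement is in \cite{AB}, which treats only dimensions $\le 3$ (and even there the surface case rests on classification-type arguments about fake quadrics, not on a soft vanishing lemma); in dimension $4$ this vanishing is precisely the hard content of \cite{SuYa}, and in general it is the open Conjecture \ref{Lib-con}. Concretely, your own bookkeeping disposes of $h^{2,0}$ only when $b_2\le 2$, since $\omega$ forces $h^{1,1}\ge 1$ and $b_2=h^{1,1}+2h^{2,0}$; that covers cases (1)--(4), but in cases (5) and (6), where $b_2=3$ or $4$, nothing you have written excludes $h^{2,0}=1$. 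And it cannot be excluded by the formal apparatus you assemble: in case (6), say, the kernel of $\operatorname{Sym}^2H^2\to H^4$ is a $4$-dimensional rational sub-Hodge structure, while the off-diagonal part of $\operatorname{Sym}^2H^2$ would be $6$-dimensional if $h^{2,0}=1$, so the quotient would necessarily carry surviving $(4,0)$- and $(3,1)$-classes --- a picture fully consistent with hard Lefschetz, with $\omega^2\neq 0$, and with generation in degree $2$. Ruling it out requires actual complex geometry, i.e.\ the arguments of \cite{SuYa}; your $h^{3,1}$-dichotomy, fine as far as it goes, silently presupposes $H^2=H^{1,1}$ and therefore inherits exactly this gap.
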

\begin{rem}
The Poincar\'e polynomials of the above K\"ahler threefold  are respectively 
\begin{enumerate}
\item $1+t^2+t^4+t^6+t^8$,
\item $1+t^2+2t^4+t^6+t^8=(1+t^4)(1+t^2+t^4)$,
\item $1+2t^2+2t^4+2t^6+t^8=(1+t^2)(1+t^2+t^4+t^6)$,
\item  $1+2t^2+3t^4+2t^6+t^8=(1+t^2+t^4)^2$,
\item  $1+3t^2+4t^4+3t^6+t^8=(1+t^2)^2(1+t^2+t^4)$,
\item $1+4t^2+6t^4+4t^6+t^8=(1+t^2)^4$,
\end{enumerate}
which are respectively the same as the Poincar\'e polynomial of 
\begin{enumerate}
\item $\mathbb {CP}^4$,
\item $S^4 \times \mathbb {CP}^2$ (note that $S^4$ is not K\"ahler), 
\item $\mathbb {CP}^1 \times \mathbb {CP}^3$, 
\item $\mathbb {CP}^2 \times \mathbb {CP}^2$, 
\item $\mathbb {CP}^1 \times \mathbb {CP}^1 \times \mathbb {CP}^2$,
\item $\mathbb {CP}^1 \times \mathbb {CP}^1 \times \mathbb {CP}^1 \times \mathbb {CP}^1$.
\end{enumerate}
\end{rem}
{}
%%%%%%%%%%%%%%
\,
So, whatever the Hodge number $h^{p,p}$ is, the Hilali conjecture holds for a \emph{rationally elliptic} K\"ahler manifold $X$ of dimension $\leqq 4$, since $\chi(X)>0$.
How about higher dimension bigger than $4$? As far as the author knows, there is no work available.

However, speaking of K\"ahler manifolds, the following famous theorem for \emph{formality} (see \cite{DGMS} and \cite{FHT}) should be mentioned:
Pierre Deligne, Phillip Griffiths, John Morgan and D. Sullivan \cite{DGMS} proved the following 
\begin{thm}[Deligne--Griffiths--Morgan--Sullivan]
Any compact K\"ahler manifold is a \underline{formal} space.
\end{thm}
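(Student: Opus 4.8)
The plan is to prove that the rational homotopy type of a compact Kähler manifold $M$ is a \emph{formal consequence} of its cohomology ring, i.e.\ that the commutative differential graded algebra (CDGA) of complex-valued de Rham forms $(A^*(M;\C), d)$ is connected to its cohomology $(H^*(M;\C), 0)$ by a chain of quasi-isomorphisms of CDGAs. Since formality of a nilpotent space of finite type is independent of the ground field of characteristic zero (a standard descent argument), it suffices to establish formality with complex coefficients and then pass to $\Q$. The engine of the entire argument will be the $\partial\bar\partial$-lemma, so the structure is: first produce that lemma by Hodge theory, then feed it into a purely homological-algebra construction.

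The first step is to set up Hodge theory on $M$. Using the Kähler metric one obtains the Hodge decomposition of forms and the Kähler identities (e.g.\ $[\Lambda,\bar\partial]=-i\partial^*$), from which follows the crucial relation among Laplacians $\Delta_d = 2\Delta_\partial = 2\Delta_{\bar\partial}$. This relation shows that the harmonic forms are bigraded by Hodge type and, decisively, yields the $\partial\bar\partial$-lemma: if a form $\alpha$ is $\partial$-closed, $\bar\partial$-closed and $d$-exact, then $\alpha \in \op{im}(\partial\bar\partial)$. Equivalently one gets $\op{im}\, d \cap \ker d^c = \op{im}(dd^c) = \ker d \cap \op{im}\, d^c$, where $d^c$ is the conjugate of $d$ by the complex structure (so that $dd^c = 2i\,\partial\bar\partial$). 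I expect this to be the single genuinely hard input; everything afterward is formal.

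With the lemma in hand, the second step is to build the explicit zig-zag. Consider the subalgebra $A_c := \ker d^c \subseteq A^*(M;\C)$. Because $d d^c = -d^c d$, one checks that $A_c$ is stable under $d$, so the inclusion $\iota:(A_c,d)\hookrightarrow (A^*(M;\C),d)$ is a CDGA map, and that $d$ descends to the quotient $H_{d^c}(A):=\ker d^c/\op{im}\, d^c$, which also inherits a well-defined product; the projection $\rho:(A_c,d)\to (H_{d^c}(A), \bar d)$ is then a CDGA map as well.

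The final step is to verify three claims, each of which reduces to one of the equivalent forms of the $\partial\bar\partial$-lemma above: (a) the induced differential $\bar d$ on $H_{d^c}(A)$ vanishes (given $\alpha\in\ker d^c$, one shows $d\alpha\in\ker d^c\cap\op{im}\,d=\op{im}(dd^c)\subseteq\op{im}\,d^c$, so $[d\alpha]=0$); (b) $\iota$ is a quasi-isomorphism; and (c) $\rho$ is a quasi-isomorphism. This produces the chain
$$(A^*(M;\C),d)\xleftarrow{\ \iota\ }(A_c,d)\xrightarrow{\ \rho\ }(H_{d^c}(A),0),$$
and since $(H_{d^c}(A),0)$ is a CDGA with zero differential canonically isomorphic to $H^*(M;\C)$, this exhibits $M$ as formal over $\C$, hence over $\Q$ by descent. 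To summarize the expected difficulty: the homological bookkeeping assembling the zig-zag is elementary, whereas the Kähler-geometric $\partial\bar\partial$-lemma is where all the real work lies.
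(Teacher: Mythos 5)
The paper states this theorem without proof, simply citing \cite{DGMS}, and your proposal is precisely the original Deligne--Griffiths--Morgan--Sullivan argument from that reference: Hodge theory yields the $\partial\bar\partial$-lemma, which drives the zig-zag $(A^*(M),d)\leftarrow(\ker d^c,d)\rightarrow(H_{d^c}(A),0)$ of quasi-isomorphisms, followed by descent of formality to $\Q$. Your outline is correct, including the key verifications (vanishing of the induced differential via $\op{im}\,d\cap\ker d^c=\op{im}(dd^c)$, and the two quasi-isomorphisms), so there is nothing to compare beyond noting agreement with the cited source.
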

In fact, M. R. Hilali and My Ismail  Mamouni \cite[Theorem 2]{HM} proved the following
\begin{thm}[Hilali--Mamouni] The Hilali conjecture holds for any rationally elliptic formal space.
\end{thm}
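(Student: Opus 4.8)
The plan is to dichotomize on the sign of $\chi(X)$ and to invoke formality only in the harder branch. Write the minimal Sullivan model of $X$ as $(\Lambda V,d)$ with $V=V^{\op{even}}\oplus V^{\op{odd}}$, so that $q:=\dim V^{\op{odd}}$ and $r:=\dim V^{\op{even}}$ are the numbers of $b$-exponents and $a$-exponents, $\dim(\pi_*(X)\otimes\Q)=q+r$, and $\chi^{\pi}(X)=r-q$. By Theorem \ref{hal-1} one always has $\chi^{\pi}(X)\le 0$, i.e. $r\le q$, and $\chi(X)>0$ is equivalent to $q=r$. When $q=r$ the inequality is exactly the positively elliptic case proved above, in which formality plays no role. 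I would therefore reduce immediately to $\chi(X)=0$, equivalently $q>r$ (so in particular $q\ge 1$).

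In this branch one has $q+r\le 2q\le 2^{q}$ (using $r\le q$ and $q\ge 1$), so $\dim(\pi_*(X)\otimes\Q)\le 2^{q}$ and the whole theorem follows from the single bound
$$\dim H_*(X;\Q)\ \ge\ 2^{q}.$$
This is precisely the estimate the paper establishes when $q=r$, and the goal is to extend it to $q>r$. The essential difficulty is that one now needs a genuine \emph{lower} bound on the total Betti number, whereas the general elliptic machinery yields only the \emph{upper} bound $\dim H_*(X;\Q)\le Q_X(1)=2^{q-r}\prod_j b_j/\prod_i a_i$ of Theorem \ref{P<Q}, coming from the pure (odd) model. The odd sphere $S^{2k+1}$, with $\dim H_*=2$ but $Q_X(1)=2k+2$, shows that the pure model wildly overestimates $\dim H_*$ and is therefore useless from below; this is exactly the gap that formality must fill.

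To produce the lower bound I would pass to the bigraded model of the \emph{formal} space $X$, in which $V_0$ is the space of algebra indecomposables of $H^*(X;\Q)$ and the higher pieces record relations and syzygies. The even-degree generators carry the $a$-exponents and span a polynomial subalgebra whose defining relations sit in degrees $2b_j$; running the complete-intersection estimate with $b_i\ge 2a_i$ from Corollary \ref{ba}, exactly as in the positively elliptic computation, makes this even part contribute a factor at least $2^{r}$ to $\dim H_*(X;\Q)$. The remaining $q-r$ odd generators should then behave as genuine exterior classes: using formality together with the Poincar\'e duality of $H^*(X;\Q)$ (Theorem \ref{Halp-2}), each at least doubles the total dimension, yielding a further factor $2^{q-r}$. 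Multiplying the two contributions gives $\dim H_*(X;\Q)\ge 2^{r}\cdot 2^{q-r}=2^{q}$, as required.

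The hard part is this last step: showing that the $q-r$ surplus odd generators contribute \emph{multiplicatively} rather than being absorbed by the relations. A convenient reformulation is available, since $\chi(X)=0$ gives $\dim H_*(X;\Q)=2\dim H_{\op{odd}}(X;\Q)$, so that everything reduces to the clean inequality $\dim H_{\op{odd}}(X;\Q)\ge q$. For a genuinely pure space this is immediate from the complete-intersection-times-exterior description of the cohomology; the real work is to show that formality alone forces the same inequality even though a formal rationally elliptic space need not be pure, the odd spheres once more illustrating the discrepancy. This is the step I expect to demand the most care, and it is exactly why the hypothesis is \emph{formality} and not mere rational ellipticity.
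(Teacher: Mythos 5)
Your dichotomy is sound as far as it goes: by Theorem \ref{hal-1}, $\chi(X)>0$ is equivalent to $q=r$, and that branch is exactly the positively elliptic computation, where formality indeed plays no role; and in the $\chi(X)=0$ branch your arithmetic reductions are correct ($q>r\geq 0$ gives $q\geq 1$, $\dim H_*(X;\Q)=2\dim H_{\op{odd}}(X;\Q)$, so the Hilali inequality follows from $\dim H_{\op{odd}}(X;\Q)\geq q$). But the proposal stops exactly where the theorem begins: the claim that the $q-r$ surplus odd generators contribute \emph{multiplicatively}, i.e.\ $\dim H_*(X;\Q)\geq 2^q$ (or even the weaker $\dim H_{\op{odd}}(X;\Q)\geq q$), is asserted and then explicitly deferred, and nothing in your bigraded-model discussion delivers it. For a mixed (non-pure) elliptic model the cohomology does not split as (even complete intersection) tensor (exterior algebra on the surplus odd classes), and Poincar\'e duality by itself does not make an odd generator double $\dim H_*$. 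That your key bound is a genuine formality phenomenon --- not a consequence of Corollary \ref{ba} plus Theorem \ref{Halp-2} --- is shown by the elliptic minimal model $\bigl(\Lambda(a_3,b_3,c_5),\, da=db=0,\ dc=ab\bigr)$: here $q=3$, $r=0$, Poincar\'e duality holds with Betti numbers $1,2,2,1$ in degrees $0,3,8,11$, yet $\dim H_*=6<2^3$; this model is of course non-formal (it carries Massey products), so any proof of $\dim H_*\geq 2^q$ must use formality structurally, for instance via the known description of the cohomology of a formal elliptic space as a graded complete intersection, which you would have to prove or cite. Note also that the paper itself offers no proof to compare against: it quotes the statement from Hilali--Mamouni \cite[Theorem 2]{HM}, and their argument supplies precisely the structural input your sketch leaves open.

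Two secondary slips are worth flagging. First, odd spheres \emph{are} pure: the minimal model $(\Lambda(y_{2k+1}),0)$ trivially satisfies the purity conditions ($dV^{\op{even}}=0$ and $dV^{\op{odd}}\subset\Lambda V^{\op{even}}$), so they illustrate no discrepancy between formal and pure; correspondingly, your claim that for a ``genuinely pure'' space the inequality is immediate from a complete-intersection-times-exterior description of $H^*$ is also unfounded, since pure elliptic spaces need not be formal and their cohomology need not have that tensor form. Second, you interchange two different targets: in the $\chi(X)=0$ branch the Hilali inequality needs only $\dim H_{\op{odd}}(X;\Q)\geq q$, whereas your announced bound $\dim H_*(X;\Q)\geq 2^q$ amounts to $\dim H_{\op{odd}}(X;\Q)\geq 2^{q-1}$, a much stronger statement; the text slides between the two. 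These slips are reparable, but since the decisive step is missing, what you have as written is a plausible reduction and plan of attack, not a proof.
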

%%%%%%%%%%%
Thus as a corollary, although we do knot know about Hodge numbers $h^{p,q}$ explicitly, we can have
%%%%%%%%%
\begin{cor}
The Hilali conjecture holds for any rationally elliptic K\"ahler manifold \underline{of any dimension}.
\end{cor}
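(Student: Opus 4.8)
The plan is to deduce the corollary as a one-line composition of the two theorems just recorded, so that the dimension of $X$ plays no role. First I would note that the hypotheses packaged into the phrase ``rationally elliptic K\"ahler manifold'' already supply everything needed: rational ellipticity (in the sense of Section \ref{res}) forces $X$ to be simply connected and to have both finite homotopy dimension and finite homology dimension, and I will read ``K\"ahler manifold'' here as \emph{compact} K\"ahler, consistently with the rest of this section.

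Next I would apply the Deligne--Griffiths--Morgan--Sullivan theorem to $X$: since $X$ is a compact K\"ahler manifold, it is a formal space. Combined with the standing hypothesis, this exhibits $X$ as a space that is \emph{simultaneously} rationally elliptic and formal. At this point the Hilali--Mamouni theorem applies verbatim, since it asserts that the inequality $\dim(\pi_*(X)\otimes \Q) \leq \dim H_*(X;\Q)$ holds for every rationally elliptic formal space. Feeding $X$ into that statement yields the desired inequality, and nothing in the argument refers to $\dim X$ or to the individual Hodge numbers $h^{p,q}$, which is precisely why the conclusion holds in all dimensions rather than only in the cases $\dim X \leq 4$ treated explicitly above.

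There is no genuine obstacle at the level of the corollary itself; all of the difficulty has been absorbed into the two input theorems. The one point demanding care is the hidden compactness assumption: formality via Deligne--Griffiths--Morgan--Sullivan is established through Hodge theory and the $dd^c$-lemma, both of which require $X$ to be compact, so the statement should be understood for compact K\"ahler manifolds. Granting that, the proof reduces to the slogan ``formal plus rationally elliptic implies Hilali,'' with formality furnished for free by the K\"ahler condition.
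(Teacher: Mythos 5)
Your proposal is correct and coincides with the paper's own argument: the corollary is obtained exactly by combining the Deligne--Griffiths--Morgan--Sullivan theorem (compact K\"ahler implies formal) with the Hilali--Mamouni theorem (the Hilali conjecture holds for rationally elliptic formal spaces), which is why no knowledge of the Hodge numbers $h^{p,q}$ or of $\dim X$ is needed. Your remark that compactness must be understood in the K\"ahler hypothesis is a fair and accurate reading of the paper's (implicit) convention.
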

%%%%%%%%%%
Speaking of formal space, here are some examples of \emph{formal singular varieties}, for which see D. Chataur--J. Cirici's paper \cite[\S 4]{CC} 
\begin{enumerate}
\item Complete intersections with \emph{isolated singularities}.
\item Projective varieties with \emph {isolated singularities}, satisfying that there exists a resolution of 
singularities such that \emph{the exceptional divisor is smooth}.
\item Projective varieties whose singularities are only isolated \emph{ordinary multiple points}. 
\item \emph{Projective cones} over smooth projective varieties.
\end{enumerate}
Here is the following question:
%%%%%%%%%%
\begin{qu}
Among the above list (1), (2), (3) and (4) of formal singular varieties, which are \underline{rationally elliptic?}
\end{qu}
Answering this question seems (to at least the author) to be not easy, as the following simple examples show:
%%%%%%%%
\begin{ex} The projective cone over $\mathbb {CP}^1$ is $\mathbb {CP}^2$ and in general, the projective cone over $\mathbb {CP}^n$ is $\mathbb {CP}^{n+1}$.
\end{ex}
%%%%%%%%
\begin{ex} Let us denote the projective cone over $X$ by $pc(X)$. The projective cone $pc \left ( \mathbb {CP}^1 \times \mathbb {CP}^1\right)$ of the quadric $\mathbb {CP}^1 \times \mathbb {CP}^1$ embedded into $\mathbb {CP}^3$, is \underline{not} rationally elliptic, although $\mathbb {CP}^1 \times \mathbb {CP}^1$ is rationally elliptic. This is due to the following. The Poincar\'e polynomial
$$P_{pc(\mathbb {CP}^1 \times \mathbb {CP}^1)}(t)=1+t^2(1+t^2)^2=1+t^2+2t^4+t^6,$$
which is by the following decomposition in the Grothendieck ring:
$$[pc \left ( \mathbb {CP}^1 \times \mathbb {CP}^1\right) ]= [c] + [\mathbb C][\mathbb {CP}^1 \times \mathbb {CP}^1]$$
Here $c$ is the cone point and $[\mathbb C]=[\mathbb {CP}^1 - c]$.
So, $1=\beta_2 \not = \beta_4=\beta_{6-2}=2$, i.e., the Poincar\'e duality does \underline{not} hold. I.e., $\op{dim}(\pi_*(pc(\mathbb {CP}^1 \times \mathbb {CP}^1)\otimes \mathbb Q)=\infty$.
Therefore \emph{the simple operation of taking the projective cone} also sometimes destroy rational ellipticity.
Here we note that this projective cone over the quadric is also considered in \cite[Example 2.2.4]{CatMig} as an example of a singular variety whose cohomology does not satisfy the Poincar\'e duality, but whose \emph{intersection homology} restores the Poincar\'e duality; $H^2(pc \left ( \mathbb {CP}^1 \times \mathbb {CP}^1\right);\Q)=\Q$, but $IH^2(pc \left ( \mathbb {CP}^1 \times \mathbb {CP}^1\right);\Q) =\Q \oplus \Q$. 
\end{ex}
%%%%%
\
%%%%%%%%%%%%%%
\section {Formal dimension and Hilali conjecture}
At the moment we do not know \emph{a characterization of rationally elliptic algebraic varieties, smooth or singular}. However, \emph{as long as its complex dimension $n$ is less than or equal to $10$, i.e., $n_X =2n \leq 20$}, the Hilali conjecture always holds for such a variety, due to the following results:

\begin{thm}[M.R. Hilali and M.I. Mamouni \cite{HM2}]
If $n_X \leq 10$, the Hilali conjecture holds.
\end{thm}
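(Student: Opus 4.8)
The plan is to encode $X$ by its Friedlander--Halperin data and reduce the statement to a finite arithmetic check. Put $q=\dim(\pi_{\op{odd}}(X)\otimes\Q)$ and $r=\dim(\pi_{\op{even}}(X)\otimes\Q)$, so the homotopy dimension is $\dim(\pi_*(X)\otimes\Q)=q+r$. The sharpened inequality $n_X\ge 3q-r\ge 2q$ recorded above turns the hypothesis $n_X\le 10$ into $q\le 5$, and together with $r\le q$ and $n_X\ge q+r$ it already bounds the homotopy dimension by $q+r\le 2q\le n_X\le 10$. Hence the entire difficulty is to bound the \emph{homology} dimension from below by $q+r$.

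First I would dispose of the positively elliptic case $q=r$, which by Theorem \ref{hal-1} is exactly the case $\chi(X)>0$. Here the Proposition on positively elliptic spaces proved above gives
$$\dim H_*(X;\Q)=\frac{\prod_{i=1}^q b_i}{\prod_{i=1}^q a_i}\ge 2^q\ge 2q=q+r,$$
so the conjecture holds with no restriction on $n_X$ whatsoever. One may therefore assume $q>r$; then $\chi^{\pi}(X)=r-q<0$, and Theorem \ref{hal-1} forces $\chi(X)=0$, i.e. $\dim H_{\op{odd}}(X;\Q)=\dim H_{\op{even}}(X;\Q)$, so that $\dim H_*(X;\Q)$ is even and at least $2$.

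Next I would enumerate. With $q\le 5$ and $0\le r<q$, and with the exponents constrained by
$$n_X=\sum_{j=1}^q(2b_j-1)-\sum_{i=1}^r(2a_i-1)\le 10,\qquad b_i\ge 2a_i,\qquad b_j\ge 2,$$
the middle inequality being Corollary \ref{ba}, only finitely many sequences $(B;A)$ are arithmetically admissible, and by Theorem \ref{sac} these are exactly the ones that can arise from a rationally elliptic space. For each such $(B;A)$, Theorem \ref{P<Q} supplies the coefficientwise upper bound $P_X(t)\le Q_X(t)$; what the Hilali inequality needs, however, is a matching \emph{lower} bound $\dim H_*(X;\Q)\ge q+r$, and producing this is the heart of the argument.

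The hard part is precisely the indeterminacy flagged earlier in the paper: when $q>r$ the Poincar\'e polynomial is \emph{not} determined by the exponents, so $\dim H_*(X;\Q)$ cannot be read off from $(B;A)$ and must be bounded below by hand in each configuration. I would do this by exhibiting explicit linearly independent cohomology classes --- the surviving products built from the even exponents forming a positively elliptic ``core'' together with the exterior classes carried by the genuinely free odd generators --- and by using Poincar\'e duality (Theorem \ref{Halp-2}) together with $\chi(X)=0$ to double the count. The delicate configurations are the mixed ones with $r\ge 1$ and $q-r$ small, where the homology is closest to the homotopy; here one cannot rely on a clean multiplicative bound such as $2^q$, which genuinely fails once $n_X$ grows, so each remaining case of formal dimension at most $10$ must be checked individually. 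It is this finite but intricate verification --- equivalently, the classification of rationally elliptic homotopy types of formal dimension $\le 10$ --- that completes the proof.
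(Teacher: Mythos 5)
You should first be aware that the paper itself contains \emph{no proof} of this theorem: it is quoted from Hilali--Mamouni \cite{HM2}, whose argument runs through a careful case analysis of minimal Sullivan models producing lower bounds for Betti numbers. Your framing is sound as far as it goes: $n_X\ge 2q$ gives $q\le 5$; the positively elliptic case $q=r$ is correctly dispatched via $b_i\ge 2a_i$ (Corollary \ref{ba}), exactly as in the paper's Proposition, and indeed with no restriction on $n_X$; and for $q>r$ Theorem \ref{hal-1} does force $\chi(X)=0$. But past that point you have a plan, not a proof. The entire content of the theorem is the lower bound $\dim H_*(X;\Q)\ge q+r$ in the mixed cases $q>r$, and your proposal explicitly defers it (``must be checked individually'') without executing a single case or supplying any mechanism that actually produces $q+r$ independent classes. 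The heuristic of a ``positively elliptic core plus exterior classes carried by the genuinely free odd generators'' presupposes a splitting of the minimal model that does not exist in general: odd generators whose differentials hit decomposables can kill precisely the products you propose to count, which is why the paper stresses that for $q>r$ no closed formula for $P_X(t)$ is known and only the \emph{upper} bound of Theorem \ref{P<Q} is available.

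Moreover, the advertised reduction to ``a finite arithmetic check'' is itself not established. Theorem \ref{sac} does give finitely many admissible pairs $(B;A)$ with $n_X\le 10$, but a fixed admissible pair is realized by several rational homotopy types with \emph{different} Betti numbers, so the Hilali inequality cannot be verified pattern by pattern as a numerical statement about $(B;A)$. Concretely, $B=(2,2,2)$, $A=(1,1)$ is realized both by $S^2\times S^2\times S^3$, with $\dim H_*=8$, and by the spatial realization of the pure elliptic model $\bigl(\Lambda(x_2,x_2',y_3,y_3',z_3),d\bigr)$ with $dy=x^2$, $dy'=(x')^2$, $dz=xx'$, whose cohomology one computes to have Betti numbers $1,2,0,0,2,0,1$ in degrees $0,2,3,4,5,6,7$, hence $\dim H_*=6$. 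Thus verifying the inequality for a given exponent pattern is a statement about \emph{all} elliptic models carrying those exponents --- which is exactly the step your sketch leaves open, and exactly the work done in \cite{HM2}. As it stands, your proposal reproduces the known easy half (the $q=r$ case and the bound $q\le 5$) and names the hard half without proving it, so it is not a proof of the theorem.
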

This was extended up to $n_X=16$:
\begin{thm} [O. Nakamura and T.Yamaguchi \cite{NY}]
If $n_X \leq 16$, the Hilali conjecture holds.
\end{thm}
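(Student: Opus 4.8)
The plan is to separate the argument according to the sign of the homotopy Euler characteristic, since the two regimes behave very differently. Write $q=\dim(\pi_{\op{odd}}(X)\otimes\Q)$ and $r=\dim(\pi_{\op{even}}(X)\otimes\Q)$, so that $\dim(\pi_*(X)\otimes\Q)=q+r$ and, by Halperin's theorems, $q\geq r$ with $b$-exponents $(b_1,\dots,b_q)$ and $a$-exponents $(a_1,\dots,a_r)$ satisfying S.A.C. If $q=r$, then $\chi(X)>0$ and $X$ is positively elliptic, so the Hilali conjecture already holds by the Proposition proved above, where $\dim(\pi_*(X)\otimes\Q)=2q\leq 2^q\leq\dim H_*(X;\Q)$; note that this uses nothing about $n_X$. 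Hence the entire content of the theorem lies in the case $q>r$, equivalently $\chi(X)=0$, and this is where the restriction $n_X\leq 16$ must be exploited.

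In the case $q>r$ I would first use the formal-dimension formula $n_X=\sum_{j=1}^q(2b_j-1)-\sum_{i=1}^r(2a_i-1)$ together with the constraints $b_j\geq 2$, $a_i\geq 1$ and $b_i\geq 2a_i$ (Corollary \ref{b>2a}) to convert $n_X\leq 16$ into effective bounds on the discrete data. Indeed the sharpened inequality $n_X\geq 3q-r\geq 2q$ already gives $q\leq 8$ and $\dim(\pi_*(X)\otimes\Q)=q+r\leq n_X$, and rewriting $n_X=\sum_{i=1}^r(2b_i-2a_i)+\sum_{j=r+1}^q(2b_j-1)$ shows that each summand is bounded below by a positive constant, so only finitely many sequences $(B;A)$ occur. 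The plan is then to enumerate the finitely many S.A.C.-admissible pairs $(B;A)$ with $n_X\leq 16$ and $q>r$, and to verify $q+r\leq\dim H_*(X;\Q)$ for each. Two sub-cases are clean: if $r=0$ then $X$ is rationally a product of odd spheres, so $\dim H_*(X;\Q)=2^q\geq q=\dim(\pi_*(X)\otimes\Q)$; and if the data splits so that $X$ is rationally a nontrivial product, the inequality follows from Proposition \ref{prop}.

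The main obstacle is precisely the homology side. For $q>r$ there is, as recorded above, no known closed formula for $P_X(t)$, and the available bound $P_X(t)\leq Q_X(t)$ is an \emph{upper} bound, hence useless for producing the \emph{lower} bound $\dim H_*(X;\Q)\geq q+r$ that Hilali demands. To get around this I would analyze the minimal Sullivan model $(\Lambda(x_1,\dots,x_r)\otimes\Lambda(y_1,\dots,y_q),d)$ directly for each of the remaining irreducible configurations with $1\leq r<q$: using Poincar\'e duality $\beta_i=\beta_{n_X-i}$ and $\chi(X)=0$, so that $\dim H_*(X;\Q)=2\dim H^{\op{even}}(X;\Q)$, one computes $\dim H_*(X;\Q)$ from the cohomology of the model and checks $q+r\leq\dim H_*(X;\Q)$ by hand on the short explicit list. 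The step I expect to be most delicate is bounding $\dim H_*(X;\Q)$ from below in these irreducible cases, since it is governed by the rank of the map induced by $(dy_1,\dots,dy_q)$ on the even part, which the S.A.C. hypothesis constrains only indirectly; this is also the reason the theorem is stated only up to $n_X\leq 16$ rather than unconditionally, the finite verification growing longer and less obviously uniform as $n_X$ increases.
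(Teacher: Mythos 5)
You should first note a mismatch of expectations: the paper contains no proof of this statement at all. It is quoted as a citation of Nakamura--Yamaguchi, whose paper is precisely devoted to establishing lower bounds for Betti numbers of elliptic spaces of the relevant formal dimensions; so the only question is whether your sketch could plausibly be completed into such a proof. Your opening reductions are correct and standard: the case $q=r$ (positively elliptic) follows from the proposition in the paper with no hypothesis on $n_X$; for $q>r$ one has $\chi(X)=0$, the bound $n_X\geq 3q-r\geq 2q$ gives $q\leq 8$, the inequality $q+r\leq n_X$ bounds the homotopy side, and your rewriting $n_X=\sum_{i=1}^r 2(b_i-a_i)+\sum_{j=r+1}^q(2b_j-1)$ with $b_i\geq 2a_i$ and $b_j\geq 2$ correctly shows there are only finitely many admissible exponent pairs $(B;A)$.

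The genuine gap is that enumerating S.A.C.-admissible pairs $(B;A)$ cannot, even in principle, finish the argument, because when $q>r$ the pair $(B;A)$ does not determine $\dim H_*(X;\Q)$: only $\chi(X)=0$, $n_X$ and Poincar\'e duality are forced. Concretely, take the elliptic minimal Sullivan algebra $(\Lambda(y_1,y_2,y_3),d)$ with $\deg y_1=\deg y_2=3$, $\deg y_3=5$ and $dy_3=y_1y_2$ (realizable by a simply connected space by Sullivan realization). It has $b$-exponents $(2,2,3)$, $r=0$, $n_X=11$, and $\dim H_*(X;\Q)=6$, whereas $S^3\times S^3\times S^5$ has the \emph{same} exponents and $\dim H_*=8$. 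This example simultaneously refutes your clean subcase ``$r=0$ implies $X$ is rationally a product of odd spheres, so $\dim H_*=2^q$'': that holds only when the model differential vanishes, which $r=0$ does not force. (The Hilali inequality $3\leq 6$ of course still holds here, so this is a counterexample to your lemma, not to the theorem.) Consequently ``verify $q+r\leq\dim H_*(X;\Q)$ for each pair on the list'' is not a finite check: one must bound $\dim H_*(X;\Q)$ from below \emph{uniformly over all elliptic minimal models} with the given exponents or formal dimension, and, as you yourself concede in your final paragraph, S.A.C. constrains the differential only indirectly and you have no mechanism for this. That lower-bound mechanism is exactly the content of the cited Nakamura--Yamaguchi paper (and of the later Cattalani--Milivojevic extension to $n_X\leq 20$), so as written your proposal is a correct reduction followed by a deferral of the theorem's actual content.
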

Furthermore this was extended up to $n_X=20$:
\begin{thm} [S. Cattalani and A. Milivojevic \cite{CatMil}]
If $n_X \leq 20$, the Hilali conjecture holds.
\end{thm}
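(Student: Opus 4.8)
The plan is to recast the inequality in terms of the rational homotopy exponents and then reduce it to a finite check. Writing $q=\dim(\pi_{\op{odd}}(X)\otimes\Q)$ and $r=\dim(\pi_{\op{even}}(X)\otimes\Q)$, the Hilali conjecture for $X$ is precisely the inequality $q+r\le\dim H_*(X;\Q)$, since $\dim(\pi_*(X)\otimes\Q)=q+r$. By Halperin's theorem we always have $q\ge r$, and the equality case $q=r$ is the positively elliptic case $\chi(X)>0$, for which the conjecture is already established above (there $\dim H_*\ge 2^q\ge 2q=q+r$). So I would assume throughout that $q>r$, equivalently $\chi^{\pi}(X)<0$ and $\chi(X)=0$.

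First I would show that the problem becomes finite once $n_X\le 20$. By Corollary \ref{ba} the S.A.C.\ forces $b_i\ge 2a_i\ge 2$, so Theorem \ref{Halp-2}(\ref{b}) gives $2q\le\sum_{j=1}^q b_j\le n_X\le 20$, hence $q\le 10$; combined with $b_j\ge 2a_j$ and $\sum_{j=1}^q(2b_j-1)-\sum_{i=1}^r(2a_i-1)=n_X\le 20$, this bounds every exponent, so only finitely many admissible patterns $(B;A)=(b_1,\dots,b_q;a_1,\dots,a_r)$ occur, each realized by an elliptic space via Theorem \ref{sac}. For each such pattern it then suffices to verify $q+r\le\dim H_*(X;\Q)$ for \emph{every} elliptic space realizing it, that is, to bound $\dim H_*$ from below uniformly over the admissible pure differentials.

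The crucial and genuinely hard step is this uniform lower bound on $\dim H_*$ in the mixed range $q>r$. One must resist the temptation to use $\dim H_*\ge 2^q$: this bound is \emph{false} as soon as $q>r$. For instance the pattern $B=(2,2,2)$, $A=(1,1)$ (so $n_X=7$) is realized by the pure model with $dy_1=x_1^2$, $dy_2=x_2^2$, $dy_3=x_1x_2$, whose Betti numbers are $(1,0,2,0,0,2,0,1)$, giving $\dim H_*=6<8=2^q$ (yet $q+r=5\le 6$, so the conjecture holds, but with no room to spare). Thus for each pattern I would analyze the pure model directly: choose $r$ of the differentials $dy_j$ forming a homogeneous system of parameters (one exists because $\Q[x_1,\dots,x_r]/(dy_1,\dots,dy_q)$ is finite dimensional), so that the associated complete intersection $C$ is a Gorenstein Artinian algebra with $\dim_{\Q}C\ge 2^r$ by the same estimate as in the positively elliptic case, and then compute $\dim H_*$ as the homology of $C$ under the differentials induced by the remaining $q-r$ excess generators. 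Feeding in the coefficientwise bound $P_X(t)\le Q_X(t)$ of Theorem \ref{P<Q}, the Poincar\'e duality $\beta_i=\beta_{n_X-i}$ and the vanishing $\chi(X)=0$ from Theorem \ref{Halp-2}, one reduces each case to estimating colengths and annihilators $\dim_{\Q}\bigl(C/(\bar f_{r+1},\dots,\bar f_q)\bigr)$ inside $C$.

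The main obstacle is exactly these commutative-algebra lower bounds: the excess odd generators can shrink the homology below the naive $2^q$, and controlling by how much requires a case-by-case examination of how the images $\bar f_{r+1},\dots,\bar f_q$ sit inside the Gorenstein algebra $C$. This is where the difficulty grows with $n_X$: it is tractable for $n_X\le 20$ precisely because the finitely many patterns produced above can be examined exhaustively, but it is also what prevents the argument from closing for all $n_X$, and hence prevents a proof of the full conjecture along this route.
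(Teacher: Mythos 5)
First, a point of order: the paper you were asked to match contains no proof of this statement at all --- it is quoted as a black-box citation of Cattalani--Milivojevic \cite{CatMil} --- so your attempt can only be judged on its own merits and against the published argument. The preparatory reductions you make are sound: Hilali's inequality is indeed $q+r\le\dim H_*(X;\Q)$; the case $q=r$ is the positively elliptic case settled in the paper via $\dim H_*\ge 2^q\ge 2q$; Corollary \ref{ba} together with the unlabeled corollary giving $n_X\ge\sum_{j=1}^q b_j$ yields $2q\le n_X\le 20$, hence $q\le 10$ and a bound on all exponents, so only finitely many patterns $(B;A)$ occur; and your example $B=(2,2,2)$, $A=(1,1)$ with $dy_1=x_1^2$, $dy_2=x_2^2$, $dy_3=x_1x_2$ is correct ($\dim H_*=6<8=2^q$) and rightly warns that the $2^q$ bound is unavailable once $q>r$.

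Nevertheless the proposal has two genuine gaps, and they sit exactly where the theorem lives. First, you silently replace ``every elliptic space realizing the pattern'' by ``the admissible pure differentials''. A minimal Sullivan model with prescribed exponents need not be pure, and the filtration by odd word-length (the odd spectral sequence) gives $\dim H(\Lambda V,d)\le\dim H(\Lambda V,d_{\sigma})$ for the associated pure model; the inequality runs in the \emph{wrong direction} for your purposes, so a lower bound for $\dim H_*$ established only over pure differentials says nothing about a general elliptic space with the same exponents --- the infimum of $\dim H_*$ over a pattern could a priori be attained off the pure locus, and Theorem \ref{sac} only guarantees existence of one realization, not control of all of them. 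Second, even granting purity, ``examined exhaustively'' is not an argument: each pattern carries a positive-dimensional family of admissible differentials, your colength estimates $\dim_{\Q}\bigl(C/(\bar f_{r+1},\dots,\bar f_q)\bigr)$ are never actually carried out for a single mixed pattern, and in any case the colength alone does not compute $\dim H_*$, since the cohomology of a pure model is the homology of a Koszul-type complex over $C$ whose higher, odd-degree homology also contributes (your own example has $\beta_5=2$ arising this way). Thus the decisive quantitative step --- which is the entire content of the $n_X\le 20$ verification --- is left as a program rather than proved. For comparison, the published proof does not run such a case-by-case commutative-algebra analysis of pure models: Cattalani--Milivojevic combine the Friedlander--Halperin arithmetic restrictions on exponents with refined lower bounds for Betti numbers of elliptic spaces, extending the line of argument of Hilali--Mamouni \cite{HM2} ($n_X\le 10$) and Nakamura--Yamaguchi \cite{NY} ($n_X\le 16$).
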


As far as the author knows, we do not know whether the formal dimension $n_X$ has been extended \emph{bigger than 20}.

Before going to the next section, based on what we observed so far, we want to make the following naive conjecture:
\begin{con}\label{conjecture} If $X$ is a rationally elliptic complex algebraic variety, singular or non-singular, then its Poincar\'e polynomial $P_X(t)$ is the same as that of the product of even-dimensional spheres and complex projective spaces:
$$P_X(t) = \prod_{i=1}^k P_{S^{2n_i}}(t) \times  \prod_{j=1}^s P_{\mathbb {CP}^{m_j}}(t).$$ 
\end{con}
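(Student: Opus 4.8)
The plan is to split the problem into two parts: first show that a rationally elliptic complex algebraic variety must be \emph{positively} elliptic (so that its odd Betti numbers vanish), and then show that the resulting even Poincar\'e polynomial factors into the advertised sphere and projective pieces. The reduction is natural because every factor on the right-hand side --- $P_{S^{2n_i}}(t)=1+t^{2n_i}$ and $P_{\mathbb{CP}^{m_j}}(t)=1+t^2+\cdots+t^{2m_j}$ --- involves only even powers of $t$; hence the conjectured identity forces $H_{\op{odd}}(X;\Q)=0$, which by Halperin's Theorem \ref{hal-1} is equivalent to $\chi(X)>0$ and to $\chi^{\pi}(X)=0$, i.e. $q=r$ in the notation of \S 4. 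So I would first aim to prove that rational ellipticity together with the complex algebraic structure already forces $\chi(X)>0$.

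For the smooth projective (or compact K\"ahler) case this first step is Hodge-theoretic: the odd Betti numbers are $b_{2k+1}=\sum_{p+q=2k+1}h^{p,q}$, so vanishing of odd cohomology is exactly the statement that $h^{p,q}=0$ for $p\ne q$, i.e. that the Hodge structure is of Hodge--Tate type. This is precisely the smooth incarnation of Libgober's conjecture stated at the end of the paper, and it is already known in low dimensions by the Amor\'os--Biswas and Su--Yang classifications. For a singular variety I would pass to the mixed Hodge structure on $H^*(X;\Q)$ and try to establish the Hodge--Tate property there, using that rational ellipticity already forces Poincar\'e duality of the Betti numbers (Theorem \ref{Halp-2}(2)), which is a severe constraint on the admissible mixed Hodge types and on how singularities can occur; a resolution of singularities combined with the weight filtration should reduce the singular case to the smooth one.

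Granting positive ellipticity, I would invoke Theorem \ref{Halp-2}(3) to write
$$P_X(t)=\frac{\prod_{i=1}^{q}(1-t^{2b_i})}{\prod_{i=1}^{q}(1-t^{2a_i})},$$
which is exactly the Hilbert series of the complete intersection $\Q[x_1,\dots,x_q]/(f_1,\dots,f_q)$ with $\deg x_i=2a_i$ and $\deg f_j=2b_j$. The target identity is then equivalent to saying that this Hilbert series agrees with that of a \emph{monomial} complete intersection $\Q[x_1,\dots,x_q]/(x_1^{k_1},\dots,x_q^{k_q})$ in which each generator has either $a_i=1$ (a $\mathbb{CP}^{k_i-1}$ factor) or $k_i=2$ (an $S^{2a_i}$ factor). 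The heart of the proof is thus to rule out ``exotic'' factors $\frac{1-t^{2a_ik_i}}{1-t^{2a_i}}$ with $a_i\ge 2$ and $k_i\ge 3$. Here the arithmetic input of Corollary \ref{ba} ($b_i\ge 2a_i$) and of S.A.C. (Theorem \ref{sac}) would be supplemented by hard Lefschetz: the existence of an ample class in $H^2$ forces a degree-$2$ generator and makes the even Betti sequence unimodal, features that the exotic factors disrupt unless they are absorbed by projective pieces. I would then run an induction on $\op{dim} X$, peeling off a $\mathbb{CP}$ or sphere factor via a Lefschetz-type fibration argument and the structure theory underlying Theorem \ref{bmm}.

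The main obstacle is this last step. The purely arithmetic data (S.A.C., Corollary \ref{ba}, Poincar\'e duality) are \emph{not} sufficient to force the product structure: the Cayley plane $F_4/\op{Spin}(9)$ is rationally elliptic with Poincar\'e polynomial $1+t^8+t^{16}$, which is not a product of sphere and projective polynomials, and it is excluded only because it is not a complex algebraic variety. Thus any proof must genuinely exploit the K\"ahler/algebraic structure --- hard Lefschetz together with the Hodge--Tate property --- to eliminate the exotic complete intersections that rational homotopy theory alone permits, and at present there is no classification of rationally elliptic projective varieties to fall back on. The singular case compounds this difficulty, since one must first determine which mixed Hodge types can survive under the rigidity imposed by Poincar\'e duality.
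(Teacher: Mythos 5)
The statement you set out to prove is Conjecture \ref{conjecture}, which the paper itself poses as an open (``naive'') conjecture: the paper contains \emph{no proof} of it, so there is nothing to match your argument against, and your text is, by its own admission, a research program rather than a proof. The reductions you actually carry out are sound: since every factor on the right-hand side is an even polynomial, the conjectured identity forces $H_{\op{odd}}(X;\Q)=0$, which by Theorem \ref{hal-1} is equivalent to $\chi(X)>0$ and to $q=r$; Theorem \ref{Halp-2}(3) then writes $P_X(t)=\prod_{i=1}^{q}(1-t^{2b_i})/\prod_{i=1}^{q}(1-t^{2a_i})$, and the conjecture becomes exactly the assertion that one may arrange the factors so that each has either $a_i=1$ (a $\mathbb{CP}$ factor) or $b_i=2a_i$ (an even-sphere factor). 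Your Cayley plane example is also correct and well chosen: $F_4/\op{Spin}(9)$ has $P(t)=1+t^8+t^{16}$, realizing $(B;A)=((12);(4))$, which satisfies S.A.C., so the Friedlander--Halperin arithmetic (Theorem \ref{sac}, Corollary \ref{ba}) together with Poincar\'e duality genuinely cannot suffice, and the complex algebraic structure must enter.

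That said, each pillar of your plan has a real gap. First, positive ellipticity of rationally elliptic algebraic varieties is itself open, and it is \emph{false} without compactness: $\mathbb{C}^n\setminus\{0\}$ ($n\geq 2$) is a simply connected rationally elliptic quasi-projective variety with $P_X(t)=1+t^{2n-1}$, so either the conjecture implicitly assumes compactness or your first step must use properness essentially; note moreover that this example \emph{is} of Hodge--Tate type in the mixed sense ($H^{2n-1}$ has type $(n,n)$), which shows that Conjecture \ref{Lib-con} does not imply $H_{\op{odd}}=0$ in the mixed setting --- your identification of ``Hodge--Tate'' with ``odd cohomology vanishes'' is valid only for pure Hodge structures, i.e.\ in the smooth compact case. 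Second, hard Lefschetz and unimodality fail on the ordinary cohomology of singular projective varieties (they hold for intersection cohomology; the paper's projective-cone example is precisely a case where even Poincar\'e duality fails on $H^*$), and while rational ellipticity restores Poincar\'e duality of Betti numbers, you offer no mechanism transferring hard Lefschetz to $H^*(X;\Q)$ of a singular rationally elliptic variety, so the induction that ``peels off'' a $\mathbb{CP}$ or sphere factor has no foundation there. Third, even granting smoothness, projectivity and the Hodge--Tate property, no argument is given that excludes the exotic factors $(1-t^{2a_ik_i})/(1-t^{2a_i})$ with $a_i\geq 2$, $k_i\geq 3$; that exclusion is exactly where the problem is open, the only known instances being the special classes the paper records (toric via Theorem \ref{bmm}, K\"ahler manifolds of dimension at most $4$ via Amor\'os--Biswas and Su--Yang). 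In short, your roadmap is accurate and its obstruction analysis honest, but it does not constitute a proof --- consistently with the paper, where the statement stands as a conjecture.
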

\begin{rem}
If Conjecture \ref{conjecture} is correct,  then, since $\chi(X)>0$, as a corollary we would get that \emph{the Hilali conjecture always holds for
any rationally elliptic complex algebraic variety, singular or non-singular.}
\end{rem}

%%%%%%%%%%%%%%%%
\section{Hilali Conjecture modulo ``products"}

 \quad $\dim \pi_*(X)\otimes \Q$, $\dim H_*(X;\Q)$, $\chi^{\pi}(X)$, $\chi(X)$ are
special values of Poincar\'e polynomials:
$$P_X^{\pi}(t):= \sum_k \dim \pi_k(X;\Q) t^k, \quad P_X(t):= \sum_k \dim H_k(X;\Q) t^k.$$
$$\dim (\pi_*(X)\otimes \Q) =P_X^{\pi}(1), \quad \dim H_*(X;\Q) =P_X(1)$$
$$\chi^{\pi}(X)=P_X^{\pi}(-1), \quad \chi(X)=P_X(-1).$$
For the three ``distinguished values" for $t$, we have the following inequalities, although the third one is conjectural:
\begin{enumerate}
\item $t=-1$: $P_X^{\pi}(-1) < P_X(-1)$ (By Halperin's theorem). 
\item $t=0$: $0=P_X^{\pi}(0)<P_X(0)=1.$
\item $t=1$: $P_X^{\pi}(1) \leq P_X(1)$ (Hilali conjecture).
\end{enumerate}
How about the other values $t$ for comparing $P_X^{\pi}(t)$ and $P_X(t)$? Of course $P_X^{\pi}(t) \leq P_X(t)$ does not necessarily hold for some values $t$, as shown below.
%%%%%%%%%%
\begin{ex}
$P_{S^2}^{\pi}(t)= t^2+t^3$ and $P_{S^2}(t)=1+t^2$. $P_{S^2}^{\pi}(1)= P_{S^2}(1)=2.$ 

\qquad  \qquad \qquad $P_{S^2}^{\pi}(2)=2^2+2^3=12 > P_{S^2}(2)=1+2^2=5.$
\end{ex}
``homotopy is additive" and ``homology is multiplicative", as we used above! Namely,
$$P_{X \times Y}^{\pi}(t) = P_X^{\pi}(t) + P_Y^{\pi}(t), \quad P_{X \times Y}(t) = P_X(t) \times P_Y(t).$$
In particular we have $P_{X^n}^{\pi}(1)=nP_X^{\pi}(1), P_{X^n}(1)= (P_X(1))^n.$
Here $X^n$ is the Cartesian product $X^n= \underbrace{X \times \cdots \times X}_n$.
%%%%%%%%%

In \cite{Yo} the author showed the following theorem:
\begin{thm}[Hilali Conjecture modulo ``products"] For any rationally elliptic space $X$, there exists an integer $n_0(X)$ such that for any integer $n \geq n_0(X)$
$$P_{X^n}^{\pi}(1) < P_{X^n}(1).$$
\end{thm}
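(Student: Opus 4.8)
The plan is to reduce the statement to the elementary fact that an exponential function with base at least $2$ eventually dominates a linear one. First I would abbreviate $a := P_X^{\pi}(1) = \dim(\pi_*(X)\otimes\Q)$ and $b := P_X(1) = \dim H_*(X;\Q)$, both of which are finite non-negative integers precisely because $X$ is rationally elliptic. Using the two identities recorded just above the theorem, namely ``homotopy is additive'' and ``homology is multiplicative'' in the forms $P_{X^n}^{\pi}(1) = n\,P_X^{\pi}(1)$ and $P_{X^n}(1) = (P_X(1))^n$, the desired inequality $P_{X^n}^{\pi}(1) < P_{X^n}(1)$ becomes simply
$$ n\,a < b^{\,n}. $$

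Next I would split according to the value of $b$, which satisfies $b \geq 1$ since $H_0(X;\Q)=\Q$. If $b=1$, then the rational homology of $X$ is that of a point, so by the Whitehead--Serre theorem (Theorem \ref{serre}) the rational homotopy vanishes and $a=0$; the inequality reads $0<1$ and holds for every $n\geq 1$, so $n_0(X)=1$ works. If instead $b\geq 2$, then the right-hand side $b^{\,n}$ grows at least like $2^n$, hence exponentially, while the left-hand side $n\,a$ grows at most linearly in $n$; therefore $n\,a/b^{\,n}\to 0$ and there exists an index $n_0(X)$ beyond which $n\,a < b^{\,n}$. Taking logarithms makes the threshold explicit: the inequality is equivalent to $n\ln b > \ln n + \ln a$, which holds for all sufficiently large $n$ because $\ln b>0$ forces the left side to outpace the right.

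There is essentially no hard step here: the entire content of the theorem is carried by the structural product formulas (additivity of rational homotopy and multiplicativity of rational homology under Cartesian products) that are already available, after which the comparison is a one-line growth estimate. The only point demanding a moment's care is the boundary case $b=1$, where one must invoke the Whitehead--Serre dichotomy to conclude $a=0$ rather than naively comparing $n\,a$ with $1$; I would dispatch this exactly as in the discussion following Lemma \ref{lemma}. If an effective threshold were wanted, for $b\geq 2$ one could simply exhibit any $n$ with $2^n>n\,a$, for instance $n_0(X)=\lceil \log_2(\max(a,1))\rceil + 2$, but the statement asks only for existence.
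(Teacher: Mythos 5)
Your proof is correct and follows essentially the same route as the paper: the same reduction via additivity of $P^{\pi}$ and multiplicativity of $P$ under Cartesian products, the same case split between $P_X(1)=1$ (dispatched by Whitehead--Serre, giving $P_X^{\pi}(1)=0$) and $P_X(1)\geq 2$ (where the paper invokes the Calculus fact $\lim_{n\to\infty} n r^n = 0$ for $|r|<1$, which is equivalent to your logarithmic growth estimate). One inessential slip in an aside: your optional explicit threshold $n_0(X)=\lceil \log_2(\max(a,1))\rceil + 2$ fails in the boundary case $a=4$, $b=2$, $n=4$, where $n\,a = 16 = b^{\,n}$ is not strictly smaller; since the theorem asks only for existence, this does not affect the validity of your argument.
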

This theorem was motivated by an elementary fact in Calculus: 
\begin{thm} If $|r|<1$, then $\displaystyle \lim_{n \to \infty}nr^n =0$.
\end{thm}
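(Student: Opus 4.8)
The plan is to reduce to the case $0<|r|<1$ (the case $r=0$ being trivial, since then $nr^n=0$ for all $n$) and then to bound $n|r|^n$ from above by an explicit sequence that visibly tends to $0$, so that the conclusion follows from the squeeze theorem. Note that it suffices to prove $n|r|^n\to 0$, because $|nr^n|=n|r|^n$.

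First I would write $1/|r|=1+h$ with $h>0$, which is possible precisely because $|r|<1$. The key estimate comes from the binomial theorem: for $n\geq 2$,
$$(1+h)^n=\sum_{k=0}^{n}\binom{n}{k}h^k\geq \binom{n}{2}h^2=\frac{n(n-1)}{2}\,h^2,$$
where I simply discard all the (non-negative) terms except the $k=2$ term. Taking reciprocals and multiplying by $n$ then gives, for $n\geq 2$,
$$0\leq n|r|^n=\frac{n}{(1+h)^n}\leq \frac{n}{\tfrac{n(n-1)}{2}h^2}=\frac{2}{(n-1)h^2}.$$
Since $h$ is fixed and the right-hand side tends to $0$ as $n\to\infty$, the squeeze theorem yields $n|r|^n\to 0$, and hence $\lim_{n\to\infty}nr^n=0$.

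An alternative I would keep in reserve is a ratio (monotonicity) argument: setting $a_n=n|r|^n$, one computes $a_{n+1}/a_n=\tfrac{n+1}{n}|r|\to |r|<1$, so the sequence is eventually strictly decreasing and bounded below by $0$, hence convergent to some $L\geq 0$; passing to the limit in $a_{n+1}=\tfrac{n+1}{n}|r|\,a_n$ forces $L=|r|L$, and since $|r|\neq 1$ this gives $L=0$. The only thing to get right in either route is the single inequality (the binomial bound, respectively the threshold beyond which the ratio stays below $1$); there is no genuine obstacle here, so I would present the binomial-plus-squeeze version as the cleanest and most self-contained.
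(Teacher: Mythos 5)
Your proof is correct. Note that the paper itself offers no proof of this statement: it is quoted as ``an elementary fact in Calculus'' and used as a black box (with $r=\frac{1}{P_X(1)}$) to derive the Hilali conjecture modulo products, so there is no paper argument to compare against. Your binomial-plus-squeeze route --- writing $1/|r|=1+h$ with $h>0$ and discarding all binomial terms except $\binom{n}{2}h^2$, giving $0\leq n|r|^n\leq \frac{2}{(n-1)h^2}$ for $n\geq 2$ --- is the standard self-contained proof and is entirely sound, including the reduction to $n|r|^n$ via $|nr^n|=n|r|^n$ and the trivial case $r=0$. Your reserve ratio argument is also valid, and you correctly handle its one delicate point: you establish convergence of $a_n=n|r|^n$ first (eventual strict decrease plus the lower bound $0$) before passing to the limit in the recurrence $a_{n+1}=\frac{n+1}{n}|r|\,a_n$ to conclude $L=|r|L$ and hence $L=0$.
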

{} If $P_{X}(1)=1$, then $P^{\pi}_{X}(1)=0$ (by Theorem \ref{serre} (Serre Theorem) ), hence for any integer $n \geqq 1$ we have 
$$0= P^{\pi}_{X^n}(1) < P_{X^n}(1) = 1.$$
So let $P_X(1) \geqq 2$, thus $\frac{1}{P_X(1)} <1$. Hence we have 
$$\lim_{n \to \infty} n \Bigl (\frac{1}{P_X(1)} \Bigr)^n = 0.$$
Therefore, whatever the value $P^{\pi}_X(1)$ is, we have
$$\displaystyle \lim_{n \to \infty} n P^{\pi}_X(1)\Bigl (\frac{1}{P_X(1)} \Bigr)^n =  \displaystyle\lim_{n \to \infty} \frac{nP^{\pi}_X(1)}{(P_X(1))^n} = 0.$$
Hence there exists some integer $n_0$ such that for all $n \geqq n_0$ we have 
$\frac{P^{\pi}_{X^n}(1)}{P_{X^n}(1)}  = \frac{nP^{\pi}_X(1)}{(P_X(1))^n} < 1.$
Therefore there exists some integer $n_0$ such that for all $n \geqq n_0$
$$P^{\pi}_{X^n}(1) < P_{X^n}(1).$$
%%%%%%%%%
\section{``Hilali conjecture modulo products" for Poincar\'e polynomials}
So, in the same way, for any positive real number $r$, there exists an integer $n_r(X)$ such that for all $n \geq n_r(X)$
$$P^{\pi}_{X^n}(r) < P_{X^n}(r).$$
Surely the integer $n_r(X)$ depends on the choice of the number $r$. In fact, in \cite[Theorem 1.2]{LY1} we showed the existence of some integer which does not depend on the choice of the chosen number $r$, as follows:
%%%%%%%%%
\begin{thm}
\label{pp}
Let $\epsilon$ be a positive real number. Then there exists a positive integer $n(\epsilon)$ such that for any $n \geq n(\epsilon)$
\begin{equation}\label{p<p}
P_{X^n}^{\pi}(t) < P_{X^n}(t), \forall t \in [\epsilon, \infty).
\end{equation}
\end{thm}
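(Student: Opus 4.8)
The plan is to reduce the statement to a single scalar inequality and then exploit that homology grows \emph{multiplicatively} while homotopy grows only \emph{additively}. Using that ``homotopy is additive'' and ``homology is multiplicative'', we have $P_{X^n}^{\pi}(t) = n\,P_X^{\pi}(t)$ and $P_{X^n}(t) = \bigl(P_X(t)\bigr)^n$, so it suffices to produce $n(\epsilon)$ with $n\,P_X^{\pi}(t) < \bigl(P_X(t)\bigr)^n$ for all $t \in [\epsilon,\infty)$ and all $n \geq n(\epsilon)$. Write $p(t) := P_X^{\pi}(t)$ and $q(t) := P_X(t)$; both are genuine polynomials (finite degree, by rational ellipticity) with non-negative coefficients. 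If $X$ is rationally trivial then $p \equiv 0$ and $q \equiv 1$ and the inequality is immediate, so I assume $n_X \geq 1$.

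First I would record the positivity facts that make the \emph{unbounded} interval tractable. Since $q$ has non-negative coefficients, constant term $1$, and a positive leading term of degree $n_X \geq 1$, it is strictly increasing on $(0,\infty)$; hence $q(t) \geq q(\epsilon) =: \mu > 1$ for every $t \geq \epsilon$. This uniform lower bound $\mu > 1$ is the essential gain from working on $[\epsilon,\infty)$ rather than on all of $(0,\infty)$.

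Next, to control the numerator I would absorb the polynomial growth of $p$ into a fixed power of $q$. Choose an integer $m_0$ with $m_0\,n_X > \deg p$; then $q(t)^{m_0}$ has strictly larger degree than $p(t)$, so $p(t)/q(t)^{m_0}$ is continuous on $[\epsilon,\infty)$ and tends to $0$ as $t \to \infty$, hence is bounded there, say $p(t)/q(t)^{m_0} \leq M$ for all $t \geq \epsilon$, with $M$ depending only on $X$ and $\epsilon$. For $n > m_0$ this gives the $t$-uniform estimate
$$\frac{n\,p(t)}{q(t)^{n}} = n \cdot \frac{p(t)}{q(t)^{m_0}} \cdot \frac{1}{q(t)^{\,n-m_0}} \leq \frac{n\,M}{\mu^{\,n-m_0}}, \qquad t \in [\epsilon,\infty).$$
Since $\mu > 1$, the right-hand side tends to $0$ as $n \to \infty$ (this is the $t$-uniform analogue of the elementary fact $\lim_{n\to\infty} n\,r^n = 0$ for $|r|<1$ used in the single-point case), so there is $n(\epsilon)$ with $n\,M/\mu^{\,n-m_0} < 1$ for all $n \geq n(\epsilon)$. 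For such $n$ one obtains $n\,p(t) < q(t)^n$ for every $t \geq \epsilon$, which is exactly \eqref{p<p}.

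The main obstacle is precisely the \emph{uniformity in $t$} over the half-line: for each fixed $t$ the inequality already follows from the single-point result, but the threshold there depends on $t$. The device that overcomes this is the two-stage splitting $q^{n} = q^{m_0}\cdot q^{\,n-m_0}$: the first factor is chosen large enough to dominate the fixed-degree polynomial $p$ uniformly (yielding the bounded ratio $M$), while the second factor supplies the genuinely exponential decay $\mu^{-(n-m_0)}$ that defeats the linear factor $n$ coming from additivity of homotopy. I would finally observe that $m_0$, $M$ and $\mu$ depend only on $X$ and $\epsilon$, so $n(\epsilon)$ is well defined and independent of $t$, as required.
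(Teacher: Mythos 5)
Your proof is correct, and it is worth noting that this paper itself contains no proof of Theorem~\ref{pp}: the statement is quoted from \cite[Theorem 1.2]{LY1}, and the only argument given in the text (end of \S 8) treats the single value $t=1$ via the elementary fact $\lim_{n\to\infty} n r^n = 0$ for $|r|<1$, a mechanism whose threshold depends on the point $t$ chosen. Your proposal supplies exactly the missing uniformity, and does so by an elementary, self-contained route: (i) since $P_X(t)$ has non-negative coefficients, constant term $1$ (simple connectivity gives $\beta_0=1$) and a positive top term of degree $n_X\geq 1$, it is strictly increasing on $(0,\infty)$, yielding the uniform bound $P_X(t)\geq \mu:=P_X(\epsilon)>1$ on $[\epsilon,\infty)$; and (ii) the two-stage factorization $q^n=q^{m_0}\cdot q^{n-m_0}$, where $m_0 n_X>\deg P^{\pi}_X$ makes $P^{\pi}_X(t)/P_X(t)^{m_0}$ bounded on the half-line, reduces everything to $nM/\mu^{n-m_0}\to 0$, the $t$-uniform avatar of the paper's $nr^n\to 0$. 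All the ingredients check out: the additivity/multiplicativity identities $P^{\pi}_{X^n}=nP^{\pi}_X$ and $P_{X^n}=(P_X)^n$ are the ones the paper itself uses, the rationally trivial case is correctly disposed of via Theorem~\ref{serre}, and the boundedness of $P^{\pi}_X(t)/P_X(t)^{m_0}$ follows from continuity plus the vanishing limit at infinity. A pleasant by-product of your argument is that it makes transparent the paper's remark that $\epsilon>0$ is essential: your exponential base $\mu=P_X(\epsilon)$ degenerates to $1$ as $\epsilon\to 0^{+}$, so the estimate $nM/\mu^{n-m_0}<1$ cannot be made uniform down to $t=0$; it also shows explicitly that $n(\epsilon)$ depends only on $X$ and $\epsilon$ through $m_0$, $M$ and $\mu$.
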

%%%%%%%%%%%
\begin{rem} In the above inequality (\ref{p<p}) $[\epsilon, \infty)$ cannot be replace by $[0,\infty)$ and
it is crucial that $\epsilon$ is \emph{positive}.
\end{rem}
%%%%%%%%%%
%%%%%%%
\begin{defn}[A. Libgober and S. Yokura \cite{LY1}]
Let $\frak{pp}(X;\epsilon)$ be the \emph{smallest one} of the integer $n(\epsilon)$ such that for any $n \geq n(\epsilon)$
$$P_{X^n}^{\pi}(t) < P_{X^n}(t), \forall t \in [\epsilon, \infty).$$
It is called the \emph{stabilization threshold}. 
\end{defn}
In particular, for $\epsilon =1$,  for example we have:
\begin{enumerate}
\item $\frak{pp}(S^{2n+1};1)=1$, 
\item $\frak{pp}(S^{2n};1)=3$, 
\item $\frak{pp}(\mathbb CP^1,1)=3$ and $\frak{pp}(\mathbb CP^n,1)=2$ if $n \ge 2$. 
\end{enumerate}

So, if $X=S^n, \mathbb {CP}^n$, $\frak{pp}(X;1) \leq 3.$ We asked ourselves ``$\frak{pp}(X;1) \leq 3$ for any $X$?" Many trials did not work. However, using Hilali conjecture, we \cite[Theorem 3.1]{LY1} could show:
\begin{thm} If $X$ is a rationally elliptic space X satisfying the Hilali conjecture, then 
\begin{equation}\label{<3}
\frak{pp}(X;1) \leq 3.
\end{equation}
\end{thm}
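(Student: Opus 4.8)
The plan is to turn the statement into an inequality between two one-variable polynomials and then reduce it, by an elementary monotonicity argument, to a single cubic inequality. Write $a(t):=P^{\pi}_X(t)$ and $b(t):=P_X(t)$, and set $\beta:=\dim H_*(X;\Q)=b(1)$ and $N:=n_X$. Since homotopy is additive and homology is multiplicative, $P^{\pi}_{X^n}(t)=n\,a(t)$ and $P_{X^n}(t)=b(t)^n$, so $\frak{pp}(X;1)\le 3$ is exactly the assertion that $n\,a(t)<b(t)^n$ for all $n\ge 3$ and all $t\in[1,\infty)$. First I would dispose of the degenerate case $\beta=1$: then $H_*(X;\Q)=\Q$, and the Whitehead--Serre Theorem \ref{serre} forces $\pi_*(X)\otimes\Q=0$, i.e.\ $a\equiv 0$, so $n\,a(t)=0<1\le b(t)^n$ and $\frak{pp}(X;1)=1$. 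Hence from now on I may assume $\beta\ge 2$.

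Because $b(t)$ has non-negative coefficients and constant term $1$, it is non-decreasing on $[0,\infty)$, so $b(t)\ge b(1)=\beta\ge 2$ for $t\ge 1$. For fixed such $t$ the ratio $r_n:=n\,a(t)/b(t)^n$ then satisfies $r_{n+1}/r_n=\tfrac{n+1}{n}\cdot\tfrac{1}{b(t)}\le 2\cdot\tfrac12=1$ for every $n\ge 1$, so $r_n$ is non-increasing in $n$. Consequently it suffices to prove the single inequality
\[ 3\,a(t)<b(t)^3\qquad (t\in[1,\infty)), \]
for then $n\,a(t)=r_n\,b(t)^n\le r_3\,b(t)^n<b(t)^n$ for all $n\ge 3$ and all $t\ge 1$ simultaneously. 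This reduction to $n=3$ is the conceptual core: the factor $3$ is precisely what pins the threshold at (at most) $3$, and it cannot be improved, as the computation $\frak{pp}(S^{2n};1)=3$ recorded above already shows.

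To attack $3a(t)<b(t)^3$ I would assemble the structural estimates available on $[1,\infty)$. Poincar\'e duality for elliptic spaces (Theorem \ref{Halp-2}(2)) gives $\beta_0=\beta_N=1$, whence $b(t)\ge 1+t^{N}$ and $b(t)\ge t^{N}$; combined with monotonicity this yields $b(t)\ge\max(\beta,t^{N})$. On the homotopy side, the degree bounds behind Corollary \ref{ba} (and Theorem \ref{Halp-2}(1)) place every generator of $\pi_*(X)\otimes\Q$ in degree $\le 2N-1$, so $a(t)\le a(1)\,t^{2N-1}=\dim(\pi_*(X)\otimes\Q)\,t^{2N-1}$, and the Hilali hypothesis gives $\dim(\pi_*(X)\otimes\Q)\le\beta$, hence $a(t)\le \beta\,t^{2N-1}$. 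For the tail this already closes the argument: using $b(t)^3\ge \beta\cdot t^{N}\cdot t^{N}$, with the strict factor $b(t)\ge 1+t^{N}>t^{N}$ giving $b(t)^3>\beta\,t^{2N}$, one obtains
\[ \frac{3a(t)}{b(t)^3}<\frac{3\beta\,t^{2N-1}}{\beta\,t^{2N}}=\frac{3}{t}\le 1\qquad (t\ge 3), \]
so $3a(t)<b(t)^3$ for all $t\ge 3$.

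The main obstacle is the compact range $t\in[1,3]$, where the two natural estimates pull against each other: the bound $a(t)\le\beta\,t^{2N-1}$ is sharp only near $t=1$ (it concentrates all of $\pi_*$ in top degree), while $b(t)\ge 1+t^{N}$ is sharp only when $\beta=2$ (it discards the value $b(1)=\beta$). Since $\deg a=2N-1$ sits one below $\deg b^2=2N$, no single split $b^3\ge\beta^{j}(1+t^{N})^{3-j}$ nor a bare degree count suffices across the interval; the cubic term $t^{3N}$ of $b^3$ must genuinely be brought in, and the crude factor $\beta$ in $a(t)\le\beta t^{2N-1}$ must be replaced by the actual \emph{distribution} of the homotopy degrees $\{2b_j-1,\,2a_i\}$ governed by $n_X=\sum_j(2b_j-1)-\sum_i(2a_i-1)$ and $b_i\ge 2a_i$. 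My plan here is to separate two sub-cases. When $\beta=2$, Poincar\'e duality forces $X$ to be rationally a sphere, and $3a(t)<b(t)^3$ is verified by the explicit identities $b^3-3a=1+3t^{4k-1}(t-1)+t^{6k}$ for $S^{2k}$ and $b^3-3a=1+3t^{4k+2}+t^{6k+3}$ for $S^{2k+1}$. When $\beta\ge 3$ one has $b(t)\ge 3$ on $[1,\infty)$, so $b(t)^3\ge 3\,b(t)^2$, and it is enough to establish the pointwise comparison $a(t)\le b(t)^2$ on $[1,\infty)$, strictness at $t=1$ being furnished by the Hilali slack $\dim(\pi_*(X)\otimes\Q)\le\beta<\beta^{2}$. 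Proving this quadratic comparison — by playing the Poincar\'e-duality lower bounds for $b$ against the degree-distribution data for $a$ and the exponent inequalities $b_i\ge 2a_i$ of Corollary \ref{ba}, rather than the lossy count $q+r\le\beta$ — is the delicate step I expect to absorb most of the work, and it is in the same spirit as the estimates underlying Theorem \ref{pp}.
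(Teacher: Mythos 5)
Your scaffolding is sound: the reformulation $\mathfrak{pp}(X;1)\le 3 \Leftrightarrow n\,a(t)<b(t)^n$ on $[1,\infty)$ for $n\ge 3$, the monotonicity trick $r_{n+1}/r_n=\tfrac{n+1}{n\,b(t)}\le 1$ once $b(t)\ge\beta\ge 2$ (reducing everything to $3a(t)<b(t)^3$), the $\beta=1$ case via Whitehead--Serre, the explicit sphere identities for $\beta=2$, and the tail estimate for $t\ge 3$ via $a(t)\le\beta t^{2N-1}$ and $b(t)^3>\beta t^{2N}$ all check out (the degree bound $\le 2N-1$ on homotopy generators is indeed justified by the corollaries $2n_X-1\ge\sum_j(2b_j-1)$ and $n_X\ge\sum_i 2a_i$). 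But there is a genuine gap at the decisive step: for $\beta\ge 3$ on $t\in[1,3]$ you reduce to the quadratic comparison $a(t)\le b(t)^2$ and then explicitly defer its proof (``the delicate step I expect to absorb most of the work''). That comparison is the actual content of the theorem; everything you do prove is the periphery. Moreover, the deferral is not cosmetic, because the estimates you have assembled provably cannot close it: from $a(t)\le\beta t^{2N-1}$ and $b(t)\ge\max(\beta,\,1+t^{N})$ one would need $\beta t^{2N-1}\le\max(\beta,\,1+t^{N})^2$, and at $\beta=3$, $N=3$, $t=2$ this reads $96\le 81$, which is false. So any completion must replace the lossy bound $a(t)\le\beta t^{2N-1}$ by genuine degree-distribution information (e.g.\ that a homotopy generator in degree near $2N-1$ forces sphere-like exponent data with small $\beta$, via $n_X=\sum_j(2b_j-1)-\sum_i(2a_i-1)$ and $b_i\ge 2a_i$) --- exactly the refinement you name but never carry out. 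As written, the proposal is a plausible plan, not a proof; the $\beta=2$ step also silently uses that a rationally elliptic space with $\dim H_*=2$ has the rational homotopy of a sphere, which is true but deserves its one-line minimal-model justification.

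For comparison: the paper itself contains no proof of this theorem --- it is quoted from \cite[Theorem 3.1]{LY1} --- so there is no in-text argument to match your route against; but judged on its own terms, your argument is incomplete precisely where the Hilali hypothesis must interact with the distribution of homotopy degrees on the compact interval $[1,3]$. Your reduction-to-$n=3$ observation and the tail computation are correct and would make a clean frame around an actual proof of the middle-range inequality; I recommend either proving $a(t)\le b(t)^2$ on $[1,3]$ from the exponent inequalities, or consulting the cited source for how the bounded range is handled there.
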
 
Without using the Hilali conjecture, we \cite[Proposition 3.7]{LY1} have the following theorem  \emph{via the formal dimension}:
%%%%%%%
\begin{thm} If $X$ is a rationally elliptic space X of formal dimension $n \geq 3$, then 
$$\frak{pp}(X;1) \leq n.$$
\end{thm}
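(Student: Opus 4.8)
The plan is to unwind the definition of the stabilization threshold and reduce the assertion to a single polynomial inequality evaluated at the critical exponent, where throughout $n=n_X$ denotes the formal dimension and I reserve $k$ for Cartesian powers. By definition $\frak{pp}(X;1)\le n$ amounts to proving that for every integer $k\ge n$ and every $t\in[1,\infty)$ one has $P_{X^k}^{\pi}(t)<P_{X^k}(t)$. Since homotopy is additive and homology is multiplicative, $P_{X^k}^{\pi}(t)=kP_X^{\pi}(t)$ and $P_{X^k}(t)=P_X(t)^k$, so the goal becomes
\begin{equation}\label{eq-goal}
kP_X^{\pi}(t)<P_X(t)^k,\qquad \forall k\ge n,\ \forall t\in[1,\infty).
\end{equation}
Put $h:=\dim H_*(X;\Q)=P_X(1)\ge 2$. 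Because $P_X$ has nonnegative coefficients, $P_X(t)\ge P_X(1)=h$ for $t\ge 1$, whence $P_X(t)^k\ge P_X(t)^{n}h^{k-n}$, and so \eqref{eq-goal} will follow from $P_X(t)^{n}>\tfrac{k}{h^{k-n}}P_X^{\pi}(t)$. The sequence $c_k:=k/h^{k-n}$ ($k\ge n$) is nonincreasing, since its consecutive ratio is $\tfrac{k+1}{hk}\le 1$ for $h\ge 2$; hence $c_k\le c_n=n$. Consequently the whole statement reduces to the single inequality
\begin{equation}\label{eq-star}
P_X(t)^{n}>n\,P_X^{\pi}(t),\qquad \forall t\in[1,\infty).
\end{equation}

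To attack \eqref{eq-star} I would assemble the structural data from Section 4. If $(a_1,\dots,a_r)$ and $(b_1,\dots,b_q)$ are the $a$- and $b$-exponents of $X$, then $P_X^{\pi}(t)=\sum_{i=1}^r t^{2a_i}+\sum_{j=1}^q t^{2b_j-1}$, and the corollaries to Friedlander--Halperin's theorem give $\dim(\pi_*(X)\otimes\Q)=q+r\le n$, each even degree $2a_i\le n$, and each odd degree $2b_j-1\le 2n-1$; on the homology side Poincar\'e duality (Theorem \ref{Halp-2}) gives $\beta_0=\beta_n=1$, so $P_X(t)\ge 1+t^{n}$. I would then split $[1,\infty)=[1,t_0]\cup[t_0,\infty)$ with $t_0:=n^{2/(n-1)^2}$. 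On the tail one compares leading behaviour: $P_X(t)^{n}\ge t^{n^2}$ while $n\,P_X^{\pi}(t)\le n(q+r)t^{2n-1}\le n^2t^{2n-1}$, and since $n^2>2n-1$ for $n\ge 3$ the bound $t^{n^2}>n^2t^{2n-1}$ holds for all $t\ge t_0$, yielding \eqref{eq-star} strictly there (as $P_X(t)>t^n$). On the bounded part one expands $P_X(t)^{n}\ge(1+t^{n})^{n}\ge 1+nt^{n}+\binom{n}{2}t^{2n}$; because $\binom{n}{2}\ge n$ for $n\ge 3$, the term $\binom{n}{2}t^{2n}$ dominates a single top-degree contribution $n\,t^{2n-1}$. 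In particular the extremal case $X\simeq_{\Q}S^{2m}$ (the only rationally elliptic space whose top homotopy degree actually reaches $2n-1$) is settled outright, since $(1+t^{2m})^{2m}-2m(t^{2m}+t^{4m-1})\ge 1+mt^{4m-1}\bigl[(2m-1)t-2\bigr]>0$ for $t\ge1$ and $m\ge2$.

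The hard part will be the middle range $t\in[1,t_0]$ when $X$ carries many homotopy generators, for there the estimate $P_X(t)\ge 1+t^{n}$ is far too crude: a large even rank $r$ forces a coefficient $n\cdot r$ in front of $t^{n}$ in $n\,P_X^{\pi}(t)$, which the lone term $nt^{n}$ of $(1+t^{n})^{n}$ cannot absorb. Resolving this requires quantifying the principle that \emph{abundant homotopy forces abundant homology}, and crucially without appealing to the (still open) Hilali conjecture. In the positively elliptic case $q=r$ this is available from Halperin's exact formula $P_X(t)=\prod_{i=1}^q(1-t^{2b_i})/(1-t^{2a_i})$ together with $b_i\ge 2a_i$ (Corollary \ref{ba}), which upgrades the lower bound to $P_X(t)\ge\prod_{i=1}^r(1+t^{2a_i})$ for $t\ge1$; this product already carries enough homology to dominate $n\,P_X^{\pi}(t)$. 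The general case $q>r$ I would then handle by isolating the excess odd generators and bounding their contribution against the same product, so that the trade-off between high homotopy degree (few generators, as for even spheres) and many generators (low degrees, rich homology) is made uniform across $[1,\infty)$. Organizing this trade-off cleanly is where the genuine work lies.
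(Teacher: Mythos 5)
Your reduction is sound and efficient: since $\beta_0=\beta_{n_X}=1$ gives $h:=P_X(1)\ge 2$, the monotonicity of $k/h^{k-n}$ correctly collapses all powers $k\ge n$ to the single inequality $P_X(t)^n>n\,P_X^{\pi}(t)$ on $[1,\infty)$, and your tail estimate on $[t_0,\infty)$ via $q+r\le n$, $2a_i\le n$ and $2b_j-1\le 2n-1$ is correct. But the theorem is not proved, and you say so yourself: on $[1,t_0]$, which contains the critical point $t=1$, everything is deferred to a principle (``abundant homotopy forces abundant homology'') that you never establish, and this is not a technicality but the entire content of the statement. Concretely, at $t=1$ your framework needs the strict inequality $P_X(1)^n>n(q+r)$ to come out of whatever lower bound on $P_X$ you use, and the bound $P_X(t)\ge 1+t^n$ is exactly too weak: for $X=S^2\times S^2$ one has $n=4$, $q=r=2$, so $n\,P_X^{\pi}(1)=16$ while $(1+t^n)^n\vert_{t=1}=2^4=16$ — equality, not strictness — so no splitting of $[1,\infty)$ can rescue the crude bound without finer homological input. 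In the positively elliptic case you correctly note that Halperin's formula together with $b_i\ge 2a_i$ (Corollary \ref{ba}) yields $P_X(t)\ge\prod_{i=1}^q\left(1+t^{2a_i}\right)$ for $t\ge 1$, but even there you only assert, without verification, that this product dominates $n\,P_X^{\pi}(t)$; and for $q>r$ — where no closed formula for $P_X(t)$ exists and the paper records only the \emph{upper} bound $Q_X(t)$ of Theorem \ref{P<Q} — you offer no mechanism at all for a lower bound on $P_X(t)$ strong enough to absorb the $q-r$ excess odd generators. As written, the proposal is a programme whose hardest step is flagged but not executed.

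Two smaller points. First, your parenthetical claim that an even sphere is the only rationally elliptic space whose top homotopy degree reaches $2n_X-1$ is used to ``settle the extremal case outright'' but is left unjustified; it does follow from the sharpened inequality $2n_X-q\ge\sum_{j=1}^q(2b_j-1)$ in the Remark following the Corollary (this forces $q=1$, then $r\le 1$ and the exponent pattern $b_1=2a_1$ of $S^{2a_1}$), so spell that out if you keep this route. Second, be aware that the paper itself gives no proof of this theorem — it is quoted from \cite[Proposition 3.7]{LY1} — so the standard you must meet is a self-contained argument, and the missing middle-range domination, especially in the non-positively-elliptic case $q>r$, is precisely the part that cannot be waved through.
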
 
Note that we need $n \geq 3$, because the formal dimension of $\mathbb {CP}^1 =2$, but $\frak{pp}(\mathbb CP^1,1)=3$.
%%%%%%%%%%
\begin{cor}
For any rationally elliptic complex algebraic variety of complex dimension $n$, $\frak{pp}(X;1) \leq 2n.$
\end{cor}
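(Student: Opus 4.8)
The plan is to reduce the statement to the preceding theorem, which bounds $\frak{pp}(X;1)$ by the formal dimension $n_X$ whenever $n_X \geq 3$, using the single geometric input that a complex algebraic variety of complex dimension $n$ has real dimension $2n$. First I would record that $X$, being homotopy equivalent to a CW complex of real dimension at most $2n$, satisfies $H_k(X;\Q)=0$ for all $k>2n$; hence its formal dimension obeys $n_X \leq 2n$. With this in hand the ``generic'' case is immediate: if $n_X \geq 3$, then the formal-dimension theorem gives $\frak{pp}(X;1) \leq n_X \leq 2n$, which is exactly the desired bound.

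It then remains to dispose of the small values $n_X \leq 2$. The value $n_X=1$ cannot occur, since by Halperin's Poincar\'e duality for rationally elliptic spaces (Theorem \ref{Halp-2}, part (2)) one would have $\beta_1=\beta_0=1$, contradicting simple connectivity. If $n_X=0$, then $X$ is rationally a point, so $P^{\pi}_{X^n}(1)=0<1=P_{X^n}(1)$ for every $n\geq 1$ and $\frak{pp}(X;1)=1\leq 2n$. If $n_X=2$, Poincar\'e duality together with simple connectivity forces $P_X(t)=1+t^2$, whence by the Whitehead--Serre theorem (Theorem \ref{serre}) $X$ is rationally equivalent to $S^2$, so that $\frak{pp}(X;1)=\frak{pp}(S^2;1)=3$. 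Thus in all cases $\frak{pp}(X;1)\leq \max\{3,\,n_X\}$, and for $n\geq 2$ one has $\max\{3,\,n_X\}\leq 2n$ since $2n\geq 4$; this establishes the corollary for every variety of complex dimension at least two.

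The main obstacle is precisely the case $n_X=2$: there the formal-dimension theorem does not apply, because it requires $n_X\geq 3$, and the resulting value $\frak{pp}(X;1)=3$ actually exceeds $n_X$. The bound $\frak{pp}(X;1)\leq 2n$ is nonetheless recovered as soon as $2n\geq 3$, i.e. $n\geq 2$. The genuinely delicate boundary is $n=1$, where $2n=2$ but a rationally elliptic curve such as $\mathbb {CP}^1$ (or a cuspidal curve homeomorphic to it) has $\frak{pp}(\mathbb {CP}^1;1)=3>2$; so the inequality as literally written fails in complex dimension one, and the corollary should be read for $n\geq 2$ (or, more safely, with the formal dimension $n_X$ in place of $2n$ on the right-hand side). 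I would therefore phrase and prove the clean inequality $\frak{pp}(X;1)\leq\max\{3,\,n_X\}\leq 2n$ for $n\geq 2$, treating the low formal-dimension cases by hand as above and flagging the $n=1$ exception explicitly rather than letting it be absorbed into the general statement.
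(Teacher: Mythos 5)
Your reduction is exactly the route the paper intends: the corollary is stated there without any proof, as an immediate consequence of the preceding theorem (\cite[Proposition 3.7]{LY1}) via the observation that a complex algebraic variety of complex dimension $n$ has formal dimension $n_X \leq 2n$. Your supplementary case analysis for $n_X \leq 2$ is correct and is something the paper glosses over: $n_X = 1$ is excluded by Halperin's Poincar\'e duality plus simple connectivity, $n_X = 0$ gives a rationally trivial space with $\frak{pp}(X;1)=1$, and $n_X = 2$ forces $P_X(t)=1+t^2$, hence a rational equivalence with $S^2$ (Hurewicz plus Whitehead--Serre), so $\frak{pp}(X;1)=\frak{pp}(S^2;1)=3$, which is still $\leq 2n$ once $n \geq 2$. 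Most importantly, your flagged exception at $n=1$ is a genuine catch rather than a defect of your argument: $\mathbb{CP}^1$ is a rationally elliptic complex algebraic variety of complex dimension $1$ with $\frak{pp}(\mathbb{CP}^1;1)=3>2$, so the corollary as literally stated fails there --- a fact the paper itself implicitly concedes in the note just before the theorem (``we need $n\geq 3$, because the formal dimension of $\mathbb{CP}^1$ is $2$, but $\frak{pp}(\mathbb{CP}^1,1)=3$''), yet does not carry over into the corollary's hypotheses. Your reformulation $\frak{pp}(X;1) \leq \max\{3,\,n_X\} \leq 2n$ for $n \geq 2$ is thus the correct statement, and your proof of it is complete; the only thing it buys beyond the paper is precision, since the mathematical content (bound the formal dimension by the real dimension and invoke the formal-dimension theorem) is identical.
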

%%%%%%%%%
\begin{rem}
If we could find an example such that $\frak{pp}(X;1) =4$ or $\frak{pp}(X;1) > 4$, then it would be a counterexample to the Hilali conjecture. 
\end{rem}
%%%%%%%%%
\begin{qu}
Is there any ``reason" for why we have $3$ in the above inequality (\ref{<3})?
\end{qu}
%%%%%%%%%%%%%%%%%%
\section{``Hilali conjecture modulo products" for mixed Hodge polynomials}
Using mixed Hodge structures on the cohomology group and the dual of homotopy group, one can define the following (co)homological) mixed Hodge polynomial $MH_X(t,u,v)$ and the homotopical mixed Hodge polynomial $MH_X^{\pi}(t,u,v)$ of a complex algebraic variety $X$:
$$MH_X(t,u,v) :=\sum_{k,p,q} \dim \Bigl ( Gr_{F^{\bullet}}^{p} Gr^{W_{\bullet}}_{p+q} H^k (X;\mathbb C)  \Bigr) t^{k} u^p v^q,$$
$$MH^{\pi}_X(t,u,v) :=\sum_{k,p,q} \dim  \Bigl (Gr_{\tilde F^{\bullet}}^{p} Gr^{\tilde W_{\bullet}}_{p+q} ((\pi_k(X)\otimes \mathbb C)^{\vee})\Bigr ) t^ku^p v^q.$$

Here  $(W_{\bullet}, F^{\bullet})$ is the mixed Hodge structure of the cohomology group and $(\widetilde W_{\bullet}, \widetilde F^{\bullet})$ is the mixed Hodge structure of the dual of homotopy groups.
There exists a mixed Hodge structure on the homotopy groups of complex algebraic varieties as well (see \cite{Mo}, \cite{Hai1, Hai2}). 

Here we note that the Poincar\'e polynomials $P_X(t)$ and $P^{\pi}_X(t)$ are the specializations of these mixed Hodge polynomials:$P_X(t)=MH_X(t,1,1), P^{\pi}_X(t)=MH^{\pi}_X(t,1,1).$

Then we \cite[Theorem 1.10]{LY1} showed the following: 
\begin{thm}
 Let $\epsilon$ be a positive real number. Then there exists a positive integer $n(\epsilon)$ such that for any $n \geq n(\epsilon)$
$$MH_{X^n}^{\pi}(t,u,v) < MH_{X^n}(t,u,v), \forall (t,u,v) \in [\epsilon, r] \times [\epsilon,r] \times [\epsilon,r].$$
\end{thm}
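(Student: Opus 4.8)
The plan is to run the same ``exponential beats linear'' argument as in Theorem \ref{pp}, but on the compact box $K := [\epsilon,r]\times[\epsilon,r]\times[\epsilon,r]$ and with the two ``products'' principles upgraded to mixed Hodge polynomials. First I would isolate the behaviour under Cartesian products. The splitting $\pi_k(X\times Y)\otimes\C \cong (\pi_k(X)\otimes\C)\oplus(\pi_k(Y)\otimes\C)$ is an isomorphism of mixed Hodge structures for the Morgan--Hain structure on the dual homotopy, so the homotopical mixed Hodge polynomial is \emph{additive}, $MH^{\pi}_{X^n}(t,u,v)=n\,MH^{\pi}_X(t,u,v)$. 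Dually, the K\"unneth isomorphism $H^*(X\times Y;\C)\cong H^*(X;\C)\otimes H^*(Y;\C)$ is an isomorphism of mixed Hodge structures (the weight and Hodge filtrations add on tensor products), so the cohomological mixed Hodge polynomial is \emph{multiplicative}, $MH_{X^n}(t,u,v)=\bigl(MH_X(t,u,v)\bigr)^n$. With these two identities the asserted inequality reduces, for each fixed $n$, to
$$n\,MH^{\pi}_X(t,u,v) < \bigl(MH_X(t,u,v)\bigr)^n \qquad \text{for all } (t,u,v)\in K.$$

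Next I would exploit that $K$ is compact and that $MH_X$ has non-negative coefficients. Since $X$ is connected, $H^0(X;\C)=\C$ is pure of type $(0,0)$, so the constant term of $MH_X$ equals $1$ and every monomial $t^k u^p v^q$ occurring in $MH_X$ or $MH^{\pi}_X$ has $k,p,q\ge 0$. If $\pi_*(X)\otimes\Q=0$ then $MH^{\pi}_X\equiv 0$ and, by Theorem \ref{serre}, $MH_X\equiv 1$, so the inequality reads $0<1$ and there is nothing to prove; hence by Theorem \ref{serre} again I may assume that $MH_X$ carries a monomial beyond its constant term. On $K$ every coordinate is at least $\epsilon>0$, so at each point $MH_X(t,u,v)=1+(\text{strictly positive terms})>1$; by continuity and compactness $m:=\min_K MH_X>1$, while $M:=\max_K MH^{\pi}_X<\infty$. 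Then on all of $K$ we have $n\,MH^{\pi}_X(t,u,v)\le nM$ and $\bigl(MH_X(t,u,v)\bigr)^n\ge m^n$; since $m>1$, the quantity $m^n$ grows exponentially while $nM$ grows only linearly, so $nM<m^n$ for all sufficiently large $n$, and fixing such a threshold $n(\epsilon)$ finishes the proof, exactly as the elementary limit $\lim_{n\to\infty}nr^n=0$ did for Theorem \ref{pp}.

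The genuinely new content lies in the first step, and that is where I expect the main obstacle: one must verify that the K\"unneth isomorphism preserves both the weight filtration $W_\bullet$ and the Hodge filtration $F^\bullet$, and that the product splitting of homotopy is compatible with the Morgan--Hain mixed Hodge structure on $(\pi_*(X)\otimes\C)^{\vee}$; only with these compatibilities do $MH_X$ and $MH^{\pi}_X$ become genuinely multiplicative and additive. The remaining subtlety is why $\epsilon$ must be strictly positive: on the face $t=0$ only the $k=0$ part of $MH_X$ survives, so $MH_X(0,u,v)\equiv 1$, which would force $m=1$ and destroy the exponential domination. This is the precise multivariable counterpart of the remark after Theorem \ref{pp} that $[\epsilon,\infty)$ cannot be replaced by $[0,\infty)$.
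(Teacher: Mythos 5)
Your proof is correct and follows essentially the same route as the source: the paper itself only cites \cite[Theorem 1.10]{LY1} without reproducing the argument, and the argument there runs on exactly your template --- additivity of $MH^{\pi}$ and multiplicativity of $MH$ under Cartesian products (both filtrations being compatible with direct sums and with the K\"unneth isomorphism), followed by the exponential-versus-linear comparison using $m:=\min_{K} MH_X>1$ and $M:=\max_{K} MH^{\pi}_X<\infty$ on the compact box $K=[\epsilon,r]^3$, in the spirit of the proof of Theorem \ref{pp} sketched in \S 8, with the trivial case handled via Theorem \ref{serre}. The one point worth flagging is that your threshold (like the paper's) in fact depends on $r$ as well as on $\epsilon$, since $M$ grows with $r$ while $m$ is controlled by $\epsilon$ alone --- which is precisely why $[\epsilon,r]^3$ cannot simply be replaced by $[\epsilon,\infty)^3$ by this compactness argument, as the paper's remark after the theorem acknowledges.
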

\begin{rem} At the moment we do no know whether $[\epsilon, r] \times [\epsilon,r] \times [\epsilon,r]$ can be replaced by $[\epsilon, \infty) \times [\epsilon, \infty) \times [\epsilon, \infty)$.
\end{rem}
{}

In fact, we \cite[Theorem 4.2]{LY1} showed the following theorem, which is a stronger version of 
Theorem \ref{bmm} (Biswas--Mu\~noz--Murillo's theorem \cite{BMM}):
\begin{thm}\label{mhh} The homotopical and cohomological mixed Hodge polynomials of a rationally elliptic toric manifold $X$ of complex dimension $n$ is equal to those of a product of complex projective spaces, i.e.,
$$MH_X(t,u,v) = \prod_{i=1}^k MH_{\mathbb {CP}^{n_i}}(t,u,v) = \prod_{i=1}^k \left (1+t^2uv+ \cdots + t^{2i}(uv)^i + \cdots + t^{2n_i}(uv)^{n_i} \right ).$$
$$MH^{\pi}_X(t,u,v) = \sum_{i=1}^k MH^{\pi}_{\mathbb {CP}^{n_i}}(t,u,v) = \sum_{i=1}^k \left (t^2uv+ t^{2n_i+1}(uv)^{n_i+1} \right ).$$
\end{thm}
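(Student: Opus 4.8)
The plan is to treat the cohomological polynomial $MH_X$ and the homotopical polynomial $MH^{\pi}_X$ separately, in each case reducing the three–variable statement to the one–variable Poincar\'e–polynomial statement already available and then recovering the extra bigrading by a Hodge--Tate argument. Throughout I write $X$ for the rationally elliptic compact smooth toric variety, and I use the decomposition $n = m_1 + \cdots + m_k$ of Theorem \ref{bmm}, so that the $\mathbb {CP}^{n_i}$ of the statement are the $\mathbb {CP}^{m_i}$.

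For the cohomological polynomial I would first recall that the cohomology of a smooth complete toric variety is of Hodge--Tate type: the odd cohomology vanishes and $H^{2p}(X;\C)$ is pure of Hodge bidegree $(p,p)$, since it is generated by the classes of the torus–invariant divisors. Consequently
$$MH_X(t,u,v) = \sum_p \dim H^{2p}(X;\C)\, t^{2p}(uv)^p = P_X(t)|_{t^2 \to t^2 uv}.$$
Now Theorem \ref{bmm} gives $P_X(t) = \prod_{i=1}^k P_{\mathbb {CP}^{m_i}}(t)$, and performing the substitution $t^2 \to t^2 uv$ factor by factor yields $\prod_{i=1}^k MH_{\mathbb {CP}^{m_i}}(t,u,v)$, because $P_{\mathbb {CP}^{m}}(t)|_{t^2 \to t^2 uv} = 1 + t^2 uv + \cdots + t^{2m}(uv)^m = MH_{\mathbb {CP}^{m}}(t,u,v)$. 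This settles the first identity.

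For the homotopical polynomial I would first invoke the homotopical analogue of Theorem \ref{bmm} (Libgober--Yokura), which gives $P^{\pi}_X(t) = \sum_{i=1}^k P^{\pi}_{\mathbb {CP}^{m_i}}(t) = \sum_{i=1}^k (t^2 + t^{2m_i+1})$. This already fixes the topological degrees and multiplicities of a homogeneous basis of $(\pi_*(X)\otimes\C)^{\vee}$: there are $k$ classes in degree $2$ and one class in each degree $2m_j+1$. Since $\chi(X)>0$, $X$ is positively elliptic, so by Halperin's structure theorem (cf.\ Theorem \ref{Halp-2}) its minimal model is the pure Sullivan algebra $(\Lambda(x_1,\dots,x_k,y_1,\dots,y_k),d)$ with $\deg x_i = 2$, $dx_i = 0$, $\deg y_j = 2m_j+1$, and $dy_j = f_j(x_1,\dots,x_k)$ a relation of degree $2(m_j+1)$. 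The degree–$2$ generators $x_i$ pair with $H^2(X;\C)$, which is of Hodge bidegree $(1,1)$; hence each $x_i$ has weight $2$ and bidegree $(1,1)$, contributing $t^2 uv$.

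The crux, and the step I expect to be the main obstacle, is pinning down the weights of the odd generators $y_j$. Here I would use that the minimal–model differential $d$ is a \emph{strict} morphism of mixed Hodge structures, by the theory of Morgan and Hain governing the mixed Hodge structure on the homotopy of a smooth complex variety. Since $f_j = dy_j$ is a polynomial of degree $2(m_j+1)$ in the $x_i$, it has weight $2(m_j+1)$ and bidegree $(m_j+1,m_j+1)$, and strictness forces $y_j$ to carry the same weight and bidegree. Thus the dual homotopy is itself of Hodge--Tate type, each $y_j$ contributes $t^{2m_j+1}(uv)^{m_j+1}$, and
$$MH^{\pi}_X(t,u,v) = \sum_{i=1}^k t^2 uv + \sum_{j=1}^k t^{2m_j+1}(uv)^{m_j+1} = \sum_{i=1}^k MH^{\pi}_{\mathbb {CP}^{m_i}}(t,u,v),$$
which is the second identity. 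The delicate points to verify carefully are the compatibility of the Morgan--Hain mixed Hodge structure on $\pi_*(X)$ with the pure minimal model of the positively elliptic $X$ (so that $d$ really is strict in the relevant bigrading) and the identification of the degree–$2$ generators' bidegree with that of $H^2(X;\C)$.
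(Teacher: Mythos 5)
You should know at the outset that the paper contains no proof of Theorem \ref{mhh}: it is quoted verbatim from Libgober--Yokura \cite[Theorem 4.2]{LY1}, so your proposal can only be measured against that source. Judged on its own terms, your reconstruction is essentially correct and follows the strategy one would expect there: use Theorem \ref{bmm} and its homotopical companion for the numerical data, and reinstate the $(u,v)$-grading by Hodge--Tate purity. The cohomological half is fine as written ($H^{\mathrm{odd}}(X;\Q)=0$ and $H^{2p}$ of type $(p,p)$ hold for every complete smooth toric variety by Danilov--Jurkiewicz, and purity of $H^k$ for smooth proper varieties is due to Deligne, no K\"ahler hypothesis needed), as is the degree-$2$ step, since $V^2\cong H^2(X;\C)$ compatibly with the mixed Hodge structures.

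Two points in the homotopical half need tightening. First, strictness alone does not force the $y_j$ to be pure: a morphism of mixed Hodge structures is strict, but its \emph{kernel} could carry arbitrary weights. You need $d$ to be injective on the odd indecomposables, which does hold here and is a one-line fix: if $v\in V^{\mathrm{odd}}$ satisfied $dv=0$, then minimality ($\operatorname{im}d\subseteq \Lambda^{\geq 2}V$) would make $[v]$ a nonzero odd cohomology class, contradicting $H^{\mathrm{odd}}(X;\Q)=0$; injectivity then exhibits the span of the $y_j$ of a given degree as a sub-mixed-Hodge-structure of the Hodge--Tate space of weight-$2(m_j+1)$ polynomials in the $x_i$, hence pure of type $(m_j+1,m_j+1)$. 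Second, be careful with the formality background: a complete smooth toric variety need not be projective, hence need not be K\"ahler, so Deligne--Griffiths--Morgan--Sullivan is not directly available. Your purity route sidesteps this correctly, though you should say so: $\chi(X)>0$ makes $X$ positively elliptic, its cohomology is a complete intersection $\Q[x_1,\dots,x_k]/(f_1,\dots,f_k)$, and such algebras are intrinsically formal with pure (Koszul-type) minimal model; meanwhile Morgan's mixed Hodge structure on homotopy exists for \emph{every} smooth variety, and for formal varieties it is computed from the bigraded model of the cohomology with its mixed Hodge structure --- which is precisely the compatibility (``$d$ preserves the Deligne splitting'') that the delicate point you flagged requires. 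With these two items made explicit, your proof is complete and, as far as one can tell, runs along the same lines as the cited proof in \cite{LY1}.
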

%%%%%%%%

\begin{con}[by A. Libgober]\label{Lib-con} If a quasi-projective variety is rationally elliptic, then its mixed Hodge structure is of Hodge--Tate type (or called ballanced \cite{FS}), i.e., the non-zero mixed Hodge numbers are $h^{k,p,q}$ with $p=q$. 
\end{con}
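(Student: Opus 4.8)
The plan is to transport the statement from cohomology to the homotopy generators, exploiting that rational ellipticity is a condition on $\pi_*(X)$ and that, for a complex algebraic variety, $\pi_*(X)$ itself carries a mixed Hodge structure. By Morgan (smooth case) and Hain--Navarro Aznar (general quasi-projective case) the rational homotopy type of a simply connected $X$ underlies a mixed Hodge structure: the minimal Sullivan model $(\Lambda V,d)$ is a mixed Hodge algebra, the indecomposables $V\cong\left(\pi_*(X)\otimes\mathbb C\right)^{\vee}$ carry the functorial structure $(\widetilde W_{\bullet},\widetilde F^{\bullet})$ of \S 10, and the induced structure on $H^*(\Lambda V,d)=H^*(X;\mathbb C)$ is Deligne's. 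Since $X$ is rationally elliptic, $V$ is finite dimensional, spanned by even generators $x_1,\dots,x_r$ in degrees $2a_i$ and odd generators $y_1,\dots,y_q$ in degrees $2b_j-1$, the $a$-exponents and $b$-exponents of \S 4.

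The first reduction is to the generators. The mixed Hodge structures of Hodge--Tate type (those for which the weight-graded Hodge numbers $h^{p,q}=\dim Gr_{F}^{p}Gr^{W}_{p+q}$ vanish for $p\ne q$) form an abelian subcategory closed under tensor products, duals, subobjects, quotients and extensions; equivalently it is the Tannakian category of mixed Tate Hodge structures. Hence if $V$ is Hodge--Tate then so is $\Lambda V$, and therefore so is its subquotient $H^*(X;\mathbb C)$ under the strict morphism $d$. It thus suffices to prove that each generator is Hodge--Tate, namely that $x_i$ is of weight $2a_i$ and type $(a_i,a_i)$ and that $y_j$ is of weight $2b_j$ and type $(b_j,b_j)$ --- a single Tate twist relative to the cohomological degree, exactly as for the class generating $H^*(\mathbb {CP}^n)$ and the dual of $\pi_{2k+1}(S^{2k+1})$.

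To establish this for $V$ I would set up an induction on degree using Hain's description of the homotopy mixed Hodge structure via Chen's iterated integrals and the bar construction, in which the weights on $\pi_k(X)$ are assembled from the weights on $H^*(X)$ in degrees $<k$. Assuming $H^{<k}(X;\mathbb C)$ is Hodge--Tate, the bar construction would force the generators of $V$ in degrees $\le k$ to be Hodge--Tate, and the model would then propagate this back to $H^k(X;\mathbb C)$. The base of the induction is supplied by $H^0$ and by the K\"ahler class in $H^2$, and the verified cases give confidence that the scheme closes up: Theorem~\ref{mhh} settles all toric manifolds, and the Amor\'os--Biswas and Su--Yang classifications settle the K\"ahler manifolds of dimension $\le 4$, confirming $h^{p,q}=0$ for $p\ne q$ in every instance. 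Rational ellipticity guarantees that $V$ is finite dimensional, so the induction terminates at the formal dimension $n_X$.

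The hard part, where I expect a genuinely new input, is the inductive step excluding off-diagonal \emph{pure} pieces in $H^k$, i.e. transcendental cohomology. The numerical rigidity coming from ellipticity --- the Friedlander--Halperin Poincar\'e polynomial of Theorem~\ref{Halp-2}, Poincar\'e duality, and the bound $\dim H_*(X;\Q)\le 2^{n_X}$ --- controls the \emph{dimensions} of the graded pieces and, together with $\chi(X)>0$ in the positively elliptic case, can be used to show that the Hodge--Deligne $E$-polynomial $\sum_{k,p,q}(-1)^k h^{k,p,q}u^pv^q$ is a polynomial in the single product $uv$. However this constrains only the alternating sum and does not by itself rule out a cancelling conjugate pair of classes of types $(p,q)$ and $(q,p)$ with $p\ne q$ and even weight $p+q$. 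Excluding such pairs amounts to proving that a rationally elliptic variety has no transcendental cohomology at all --- that its motive is mixed Tate, or equivalently that it admits a filtration by Tate-type (cellular) pieces. A further difficulty is that the standard d\'evissage of the mixed Hodge structure of a singular or open $X$ through a smooth proper hypercovering produces smooth proper varieties that need \emph{not} themselves be rationally elliptic, so one cannot simply induct on the pieces and must instead feed the global ellipticity of $X$ into the weight spectral sequence. Controlling this uniformly is, in my view, the principal obstacle, and it is essentially a Hodge-theoretic refinement of Conjecture~\ref{conjecture}.
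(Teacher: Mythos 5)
First, be clear about the status of the statement: Conjecture~\ref{Lib-con} is stated in the paper as an \emph{open} conjecture attributed to A.~Libgober, and the paper contains no proof of it, so there is no argument of the paper to compare yours against; what you have written is, as you yourself say, a strategy rather than a proof. Judged as a strategy, it contains one genuine circularity beyond the obstacle you flag. Your reduction ``if $V=(\pi_*(X)\otimes\C)^{\vee}$ is Hodge--Tate then so are $\Lambda V$ and its subquotient $H^*(X;\C)$'' is sound, since Hodge--Tate structures form a Tannakian subcategory closed under tensor products, duals and subquotients. But the inductive step runs the wrong way. In the bar-construction/minimal-model picture, the mixed Hodge structure on the degree-$k$ indecomposables $V^k$ is assembled not from $H^{<k}(X)$ alone but from $H^{\le k}(X)$: for a simply connected space the new generators in degree $k$ are, modulo decomposables, dual to the relevant (primitive) part of $H^k(X)$ itself, and the first tensor layer of the bar complex in that degree is $H^k$. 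Consequently ``assume $H^{<k}$ Hodge--Tate $\Rightarrow$ $V^{\le k}$ Hodge--Tate $\Rightarrow$ $H^k$ Hodge--Tate'' is circular at exactly the point where something must be proved: the assertion that a degree-$k$ generator has type $(p,p)$ \emph{is} the assertion that the corresponding piece of $H^k$ does. The implication that does come for free from Tannakian closure is the converse one, ``cohomology Hodge--Tate $\Rightarrow$ homotopy Hodge--Tate,'' which is not what Conjecture~\ref{Lib-con} asserts.

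Your diagnosis of the remaining obstacle is accurate and is the substantive content of your note: all the numerical rigidity that ellipticity provides (Theorem~\ref{Halp-2}, Poincar\'e duality of Betti numbers, $\dim H_*(X;\Q)\le 2^{n_X}$, an $E$-polynomial in $uv$) constrains dimensions and alternating sums only, and is blind to a conjugate pair of classes of types $(p,q)$ and $(q,p)$ with $p\ne q$. Indeed, every known restriction on elliptic spaces is a statement about Betti numbers and exponents, never about the Hodge decomposition, and the Hodge-number results cited in the paper (the toric case, Theorem~\ref{mhh}; K\"ahler dimension $\le 4$ via Amor\'os--Biswas and Su--Yang) come from classification, not from the Sullivan model. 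One smaller caution: the specific normalization you assert ($x_i$ of type $(a_i,a_i)$, $y_j$ of type $(b_j,b_j)$) is strictly stronger than Hodge--Tateness and is not known beyond examples such as Theorem~\ref{mhh}; it should not be presented as a consequence of the formalism. As it stands, your proposal reformulates the conjecture (equivalently: the motive of a rationally elliptic quasi-projective variety is mixed Tate) rather than proving it --- which is consistent with the paper, where the statement is deliberately left open, essentially as a Hodge-theoretic refinement of Conjecture~\ref{conjecture}.
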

If Conjecture \ref{Lib-con} is correct, then for any rationally elliptic quasi-projective variety $X$ the mixed Hodge polynomial $MH_X(t,u,v)$ is a polynomial of $t$ and $uv$, as in Theorem \ref{mhh}.\\
%%%%%%%%%%%%%%

\noindent
{\bf Acknowledgements:} This is an extended version of the author's talk at Mini workshop ``Various aspects of singularities" held at the University of Tokyo, March 30--31, 2023. The author would like to thank the organizers, Toru Ohmoto, Hiraku Kawanoue, Kei-ichi Watanabe and Shihoko Ishii, for the invitation to the workshop and in particular Prof. S. Ishii for support for travel expenses. The author would also like to thank Anatoly Libgober and Alex Dimca for useful comments and suggestions. The author is supported by JSPS KAKENHI Grant Numbers JP19K03468 and 23K03117.


\begin{thebibliography}{ams}
\bibitem{AB}
J. Amor\'os and I. Biswas, {\it Compact K\"ahler manifolds with elliptic homotopy type}, Adv. Math., {\bf 224} (2010), 1167--1182.

\bibitem{BD}
G. Barthel and A. Dimca, {\it On conplex projective hypersurfaces which are homology-$P^n$'s}, in \emph{Singularities}, ed. by J.-P. Brasselet, London Math. Soc. Lecture Note Ser.,  {\bf 201} (1994), 1--28.

\bibitem{BB} M. Berger and R. Bott, {\it Sur kes vari\'et\'es \`a courbure strictement positive}, Topology, {\bf 1} (1962), 301--311.

\bibitem{BG}  R. L. Bishop and S. I. Goldberg, {\it Some implications on the generalized Gauss-Bonnet theorem}, Trans. Amer. Math. Soc., {\bf 112} (1964), 508-545.

\bibitem{BMM} 
I. Biswas, V. Mu\~noz and A. Murillo, {\it Rationally elliptic toric varieties}, Tokyo J. Math., {\bf 44} (1) (2021) 235--250.

\bibitem{Br} L. Brenton, {\it Some examples of singular compact analytic surfaces which are homotopy equivalent to the complex projective plane}, Topology, {\bf 16} (1977), 423-433.

\bibitem{Car} H. Cartan,  {\it La transgression dans un groupe de Lie et dans un espace fibre principal}, Colloque de Topologie (espaces fibr\'es), Bruxelles (1950), Thone, Li\`ege; Masson, Paris, 1951,57--71.

\bibitem{CatMig} M. A. A. de Catalodo and L. Migliorini, {\it The decomposition theorem, perverse sheaves and the topology of algebraic maps}, Bull. Amer. Math. Soc., {\bf 46}, No.4 (2009), 535--633.

\bibitem{CC} D. Chataur and J. Cirici, {\it Rational homotopy of complex projective varieties with normal isolated singularities}, Forum Math., {\bf 29} (2017), 41--57.

\bibitem{Chern} S.-S. Chern, {\it The geometry of G-structures}, Bull. Amer. Math. Soc., {\bf 72} (1966), 167-2019.

\bibitem{CD} A. D. R. Choudary and A. Dimca, {\it Singular complex surfaces in $\mathbb P^3$ having the same $\mathbb Z$-homology and $\Q$-homotopy type as $\mathbb P^2$}, Bull. London Math. Soc. 22 (1990) 145--147.

\bibitem{CHHZ} S. Chouingou, M. A. Hilali, M. R. Hilali and A. Zaim, {\it Note on a relative Hilali conjecture}, Ital. J. Pure Appl. Math., {\bf 46} (2021), 115--128.

\bibitem{DGMS}
P. Deligne, P. B. Griffiths, J. W. Morgan and D. P. Sullivan, {\it Real homotopy theory of K\"ahler manifolds}, Invent. Math., {\bf 29} (1975) 245--274.

\bibitem{CatMil}
S. Cattalani and A. Milivojevi\'c, {\it Verifying the Hilali conjecture up to formal dimension twenty},
J. Homotopy Relat. Struct., {\bf 15} (2020), 323--331.

\bibitem{Chen} X. Chen, {\it Riemannian manifolds with entire Grauert tube are rationally elliptic},  Geom. \& Topol., {\bf 28}, No.3 (2024), 1099--1112.

\bibitem{FH} Y. F\'elix and S. Halperin, {\it Rational homotopy theory via Sullivan models: a survey}, Notices of the ICCM (International Consortium of Chinese Mathematicians), {\bf 5}, No.2 (2017), 14--36.

\bibitem{FHT} Y. F\'elix, S. Halperin and J.-C. Thomas, {\it Rational Homotopy Theory}, Grad. Texts in Math., {\bf 205}, Springer, 2001.

\bibitem{FOT} Y. F\'elix, J. Oprea and D. Tanr\'e, {\it Algebraic Models in Geometry}, Oxf. Grad. Texts Math., {\bf 17}, Oxford, 2008.

\bibitem{FS}
C. Florentino and J. Silva, {\it Hodge--Deligne polynomials of character varieties of free abelian groups}, Open Mathematics, {\bf 19}, No.1  (2021), 338--362.

\bibitem{FH} J. B. Friedlander and S. Halperin, {\it An arithmetic characterization of the rational homotopy groups of certain spaces}, Invent. Math., {\bf 53} (1979), 117--133.


\bibitem{Gromov} M. Gromov, {\it Curvature, diameter and Betti numbers}, Comment. Math. Helv., {\bf 56}, No.2 (1981), 179--195.

\bibitem{GH} K. Grove and S. Halperin, {\it Contributions of rational homotopy theory to global problems in geometry}, Publ. Math. Inst. Hautes \'Etudes Sci., tome {\bf 56} (1982), 171--177.

\bibitem{Hai1}
R. Hain, {\it The de Rham homotopy of complex algebraic varieties, I,}  K-Theory, {\bf 1} (1987), 271--324.

\bibitem{Hai2}
R. Hain, {\it The de Rham homotopy of complex algebraic varieties, II}, K-Theory, {\bf 1} (1987), 481--497.

\bibitem{Hal} S. Halperin, {\it Finiteness in minimal models}, Trans. Amer. Math. Soc., {\bf 230} (1977), 173--199.

\bibitem{Hal2} S. Halperin, {\it Spaces whose rational homology and de Rham homotopy are both finite dimensional"}, Ast\'erisque, tome {\bf 113-114} (1984), p. 198-205.

\bibitem{Ha} A. Hatcher, {\it Algebraic Topology}, Cambridge Univ. Press (2002).

\bibitem{Hes} K. Hess, {\it A History of Rational Homotopy Theory}, Chapter 27 of ``History of Topology" ed by I.M. James, 757--796.

\bibitem{H} M. R. Hilali,
{\it Action du tore ${\mathbb T}^n$ sur les espaces simplement connexe},
Thesis, Universit\'{e} catholique de Louvain, Belgique,  (1990). 

\bibitem {HM} M. R. Hilali and M. I. Mamouni, {\it A conjectured lower bound for the cohomological dimension of elliptic spaces}, J. Homotopy Relat. Struct., {\bf 3} (2008), 379--384.

\bibitem {HM2} M. R. Hilali and M. I. Mamouni, {\it A lower bound of cohomological dimension for an elliptic space}, Topology Appl., {\bf 156} (2008), 274--283.

\bibitem{Hopf} H. Hopf, {\it Sulla geometria riemanniana globale della superficie}, 
Seminario Mat. e Fis. di Milano, {\bf 23} (1953), 48--63.

\bibitem{KP} M. Koras and K. Palka, {\it Complex planar curves homeomorphic to a line have at most four singular points}, J. Math. Pures Appl., {\bf 158 }(2022), 144-182.

\bibitem{LS} L. Lempert and R. Sz\"oke, {\it Global solutions of the homogeneous complex Monge-Amp\`ere equation and complex structures on the tangent bundle of Riemannian manifolds}, Math. Ann., {\bf 290} (1991), 689--712.

\bibitem{LY1} A. Libgober and S. Yokura, {\it Ranks of homotopy and cohomology groups for rationally elliptic spaces and algebraic varieties}, Homology, Homotopy Appl., {\bf 24} (2) (2022), 93-113.

\bibitem{LY2} A. Libgober and S. Yokura, {\it Self-products of rationally elliptic spaces
and inequalities between the ranks of homotopy and homology groups}, Topology Appl., {\bf 313} (2022),107986 (23 pages) 

\bibitem{Mo} J. W. Morgan, {\it The algebraic topology of smooth algebraic varieties}, Publ. Math. Inst. Hautes \'Etudes Sci., {\bf 48} (1978), 137--204.

\bibitem{NY}
O. Nakamura and T. Yamaguchi, {\it Lower bounds of Betti numbers of elliptic spaces with certain formal dimensions},  Kochi J. Math, {\bf 6} (2011), 9--28.

\bibitem{Pa} A. V. Pavlov, {\it Estimates for the Betti numbers of rationally elliptic spaces}, Sib. Math. J., {\bf 43}, No.6 (2002), 1080--1085.

\bibitem{Se0} J.-P. Serre, {\it Homologie singuli\`ere des espaces fibr\'es: Applications}, Ann. of Math., {\bf 54} (1951), 425--505.

\bibitem{Se} J.-P. Serre, {\it Groupes d'homotopie et classes de groupes abelien}, Ann. of Math., {\bf 58} (1953), 258--294. 

\bibitem{Severi}  F. Severi, {\it Some remarks on the topological characterization of algebraic surfaces}, in \emph{Studies in Mathematics and Mechanics: Presented to Richard Von Mises} (Academic Press, New York), (1954), 54--61.

\bibitem{SuYa} Y. Su and J. Yang,  {\it Some results on compact K\"ahler manifolds with elliptic homotopy type}, 2022, 
arXiv:2208.06998v1 [math.AT]

\bibitem{Sul} D. Sullivan, {\it Infinitesimal computations in topology}, Publ. Math. Inst. Hautes \'Etudes Sci., {\bf 47} (1977), 269--331.

\bibitem{YY1} T. Yamaguchi and S. Yokura, {\it On a Relative Hilali Conjecture}. Afr. Diaspora J. Math. (N.S.)
{\bf 21} (2018), 81--86.

\bibitem{YY2} T. Yamaguchi and S. Yokura, {\it Poincar\'e polynomials of a map and a relative Hilali conjecture}, 
Tbil. Math. J., {\bf 13} (2020), 33--47.

\bibitem{Yau} S.-T. Yau, {\it Calabi's conjecture and some new results in algebraic geometry}, Proc. Natl. Acad. Sci. USA., {\bf 74} (5): 1798--1799. 

\bibitem {Yo} S. Yokura, {\it The Hilali conjecture on product of spaces}, Tbil. Math. J., {\bf 12}, No.4 (2019), 123--129.

\bibitem {Yo2} S. Yokura, {\it Local comparisons of homological and homotopical mixed Hodge polynomials}, Proc. Japan Acad. Ser. A Math., {\bf 96} (2020), 28--31.

\bibitem {Yo3} S. Yokura, {\it A remark on relative Hilali conjectures}, Ital. J. Pure Appl. Math., {\bf 51} (2024), 560--582.

\bibitem{ZCH} A. Zaim, S. Chouingou and M. A. Hilali, {\it On a conjecture of Yamaguchi and Yokura}, Internat. J. Math. Math. Sci., Vol. {\bf 2020}, Article ID 3195926, 5 pages,  https://doi.org/10.1155/2020/3195926.

\end{thebibliography}
\end{document}